\DeclareMathOperator*{\Exp}{Exp}
\DeclareMathOperator*{\bigO}{\ensuremath{\mathcal{O}}}
\DeclareMathOperator*{\trace}{Tr}
\DeclareMathOperator*{\tr}{tr}
\DeclareMathOperator*{\inter}{int}
\DeclareMathOperator*{\proj}{proj}
\newcommand{\coneName}{\mathcal{K}}
\newcommand{\algName}{\mathcal{J}}
\newtheorem{thm}{Theorem}
\numberwithin{thm}{section}
\newtheorem{cor}{Corollary}
\numberwithin{cor}{section}
\newtheorem{lem}{Lemma}
\numberwithin{lem}{section}
\newtheorem{rem}{Remark}
\newtheorem{prop}{Proposition}
\numberwithin{prop}{section}
\newtheorem{defn}{Definition}
\numberwithin{defn}{section}
\numberwithin{ex}{section}
\newtheorem{ass}{Assumption}
\newcommand*{\newton}{d_N}
\newcommand*{\hatnewton}{d_{N, T}}
\title{A geodesic interior-point method for linear optimization over symmetric cones}
\author{Frank Permenter} 
\begin{document}
\maketitle
\newcommand{\sqrtmw}{w^{1/2}}
\newcommand{\sqrtminvw}{w^{-1/2}}
\newcommand{\qup}{q}
\begin{abstract}
We develop a new interior-point method (IPM) for symmetric-cone optimization, a
common generalization of linear, second-order-cone, and semidefinite
programming.  In contrast to classical IPMs, we update iterates with a \emph{geodesic} of the
\emph{cone} instead of the \emph{kernel} of the \emph{linear} constraints.
This approach yields a primal-dual-symmetric, scale-invariant, and
line-search-free algorithm that uses just half the variables of a standard
primal-dual IPM.\@  With elementary arguments, we establish polynomial-time
convergence  matching the standard $\bigO(\sqrt{n})$ bound.  Finally, we prove
global convergence of a long-step variant and provide an implementation that
supports all symmetric cones. For linear programming, our algorithms
reduce to central-path tracking in the log domain.
\end{abstract}

\section*{Introduction}
Let $\algName$ denote a Euclidean Jordan algebra~\cite{faraut1994analysis}
of rank $n$ with multiplication operator  $\circ : \algName \times \algName \rightarrow \algName$,  identity $e \in \algName$,
and trace inner-product $\langle u, v \rangle:= \tr u \circ v$.
This paper considers the following primal-dual pair of linear
optimization problems formulated over the
\emph{cone-of-squares} $\coneName := \left\{ u \circ u : u
\in \algName\right\}$
\begin{align}~\label{sdp:main}
\begin{array}{ll}
  \mbox{ minimize } & \langle s_0, x \rangle     \\
  \mbox{ subject to } &  x \in \coneName \cap (x_0 + \mathcal{L})
\end{array}  \qquad
\begin{array}{ll}
  \mbox{ minimize } & \langle x_0, s \rangle     \\
  \mbox{ subject to } &  s \in \coneName \cap (s_0 + \mathcal{L}^{\perp}),
\end{array}
\end{align}
where $(x, s)$ denotes the primal-dual decision variables, $(x_0, s_0) \in \algName \times \algName$
denotes fixed parameters and $\mathcal{L}\subseteq \algName$ is a linear subspace
with orthogonal complement $\mathcal{L}^{\perp}\subseteq \algName$.  This standard
form~\cite{faybusovich1997linear} generalizes linear, second-order-cone, and semidefinite 
programs~\cite{boyd2009convex}, which typically present  $x_0 + \mathcal{L}$
as the solution set of linear equations
and the dual constraint $s \in \coneName \cap (s_0 + \mathcal{L}^{\perp})$  as a cone inequality.
It is also called a \emph{symmetric cone} program given that 
 $\coneName$ is both \emph{self-dual} and \emph{homogeneous}~\cite{faraut1994analysis}.

This paper contributes to the theory of interior-point methods (IPMs),  
widely used algorithms for solving~\eqref{sdp:main}.
For IPMs, the following assumption is standard.
\begin{ass}\label{ass:slater}
The primal and dual satisfy Slater's condition, i.e.,
$\inter(\coneName) \cap (x_0 + \mathcal{L}) \ne \emptyset$
and $\inter(\coneName) \cap (s_0 + \mathcal{L}^{\perp}) \ne \emptyset$,
where $\inter(\coneName)\subseteq\algName$ denotes the interior of $\coneName$.
\end{ass}
\noindent We make this assumption throughout.

\paragraph{Interior-point methods} 
A pair $(x, s)$ is optimal if it satisfies the constraints of~\eqref{sdp:main}
and the additional \emph{complementary slackness} condition $x \circ s = 0$.
Primal-dual IPMs solve a perturbation of these constraints defined by $\mu > 0$:
  \begin{align}~\label{eq:optcond}
     x \in \coneName \cap (x_0 + \mathcal{L}), \qquad s \in \coneName \cap (s_0 + \mathcal{L}^{\perp}), \qquad x \circ s = \mu e.
  \end{align}
A unique solution $(\hat x(\mu), \hat s(\mu))$ of~\eqref{eq:optcond} exists  for all $\mu > 0$
under \Cref{ass:slater}~\cite[Theorem 2.2]{faybusovich1997euclidean}.
The set of solutions $\{ (\hat x(\mu), \hat s(\mu)) : \mu > 0\}$
is called the \emph{central path}.
Primal-dual IPMs follow the  central path to an optimal solution of~\eqref{sdp:main}, 
i.e., they solve~\eqref{eq:optcond} while gradually reducing $\mu$ to zero.

While there are a variety of primal-dual IPMs (e.g.,~\cite{faybusovich1997linear, schmieta2003extension, nesterov1998primal}),
they share a common feature.  Specifically, when initialized
at feasible points, they all produce iterates $\{x_i, s_i\}^N_{i=1}$ satisfying
\begin{align}~\label{eq:linupdate}
  x_{i+1} - x_i \in \mathcal{L}, \qquad s_{i+1} - s_i \in \mathcal{L}^{\perp},
\end{align}
which implies that $x_i \in x_0 + \mathcal{L}$ 
and $s_i \in s_0 + \mathcal{L}^{\perp}$ for all $i$. These iterations reduce
violation of the complementarity constraint $x_i \circ s_i = \mu e$ and are interleaved with reductions in $\mu$.
For fixed $\mu_0$ and $\mu_f$, IPMs can move from $(\hat x(\mu_0), \hat s(\mu_0))$ 
to $(\hat x(\mu_f), \hat s(\mu_f))$ in $\bigO(\sqrt n)$ iterations,
where $n$ denotes the rank of $\coneName$.

\paragraph{Geodesic interior-point methods}
This paper introduces \emph{geodesic interior-point methods}, a
family of IPMs that views 
$\coneName$ as a Riemannian manifold~\cite{nesterov2008primal,
lim2001riemannian, lee2007metric, faraut1994analysis}.
As indicated by~\eqref{eq:linupdate}, classical IPMs update $(x, s)$ inside  \emph{subspaces} 
that preserve the \emph{affine} constraints.
In contrast, geodesic IPMs will update $(x, s)$ along \emph{geodesic curves} 
that preserve the \emph{complementarity} constraint $x \circ s= \mu e$.
In other words, rather than enforcing~\eqref{eq:linupdate}, they will take
\begin{align}~\label{eq:geoupdate}
  x_{i+1} =  g_{x_i}(t_i) ,  \qquad s_{i+1} =  g_{s_i}(t_i),  
\end{align}
where $t_i \in \mathbb{R}$ is a chosen ``step-size'' and $g_{x_i} : \mathbb{R} \rightarrow \coneName$ and $g_{s_i} :
\mathbb{R} \rightarrow \coneName$ are chosen geodesics satisfying
\begin{align}~\label{eq:geoprop}
  g_{x_i}(0) = x_i \qquad g_{s_i}(0) = s_i, \qquad g_{x_i}(t) \circ g_{s_i}(t) = \mu e  \qquad \forall t.
\end{align}
These iterations will  reduce violation of the affine
constraints $x_i \in x_0 + \mathcal{L}$ and $s_i \in s_0 + \mathcal{L}^{\perp}$ 
and, like~\eqref{eq:linupdate}, will be interleaved with reductions in $\mu$.  Like classical IPMs, we will show that geodesic IPMs can
trace the central path in  $\bigO(\sqrt n)$ iterations,
with essentially identical per-iteration complexity.
We note that while the Riemannian geometry of
$\coneName$ has been used to analyze the central
path~\cite{nesterov2002riemannian, nesterov2008primal,karmarkar1990riemannian}, 
develop gradient methods~\cite{alvarez2004hessian, bomze2019hessian}, and solve non-convex
problems~\cite{absil2009optimization}, to our knowledge no central-path following algorithm
for~\eqref{sdp:main} is based on~\eqref{eq:geoupdate}-\eqref{eq:geoprop}.

\paragraph{Geodesics of symmetric cones}
A geodesic is the shortest path between two points 
as measured by a particular integral cost (made precise in \Cref{sec:geo}).  For this reason, tracing a geodesic curve 
 typically requires solving an ordinary differential equation (ODE)
 that expresses this integral's optimality conditions.
 For symmetric cones, however, geodesics  
 can be expressed in closed form. This in turn provides simple formulae for the update~\eqref{eq:geoupdate}.
 Indeed, for linear programming (LP), i.e., when $\algName = \mathbb{R}^n$
 and $u \circ v$ denotes elementwise multiplication,
 the update~\eqref{eq:geoupdate} will take the form 
 \begin{align}~\label{eq:lpupdate}
   x_{i+1} = x_{i} \circ \exp (t_i d_i), \qquad s_{i+1} = s_i \circ \exp (- t_i d_i)
 \end{align}
 for some $d_i \in \mathbb{R}^n$, where $\exp(u)$ denotes elementwise exponentiation.
For semidefinite programming, 
  i.e., when $\algName$ denotes the symmetric matrices and $U \circ V = \frac{1}{2}(UV+VU)$,
it will take  the form 
 \begin{align}~\label{eq:geosdp}
   X_{i+1} = X^{1/2}_{i}  \exp (t_i D_i) X^{1/2}_{i}, \qquad S_{i+1} = S_i^{1/2} \exp (- t_i D_i) S^{1/2}_i
 \end{align}
 for symmetric $D_i$, where $\exp (U)$ and  $U^{1/2}$ denote the
 matrix exponential and  the symmetric square root.   Similar
 exponential parametrizations hold for arbitrary symmetric cones.
 This will allow us to state and analyze
 algorithms based on~\eqref{eq:geoupdate} using  basic properties of Euclidean Jordan algebras.
 %Typical tools from differential geometry, such as covariant differentiation and parallel
 %transport, will not be needed.
 %In fact, our analysis will only use 
 %We'll see selecting $d_i$ requires similar orthogonal projection operation.
 %Finally, the algorithms will be efficient---both in theory and in practice---thanks
 %in part to these simple formulae.

 %In total, we'll  yields efficient algorithms, both
 %in theory and in practice, for arbitrary symmetric cone programs.
 %Further, these algorithms are scale invariant and primal-dual symmetric~\cite{tunccel1998primal},
 %like the celebrated algorithm of Nesterov~and~Todd. 

% This
% parametrization allows us to state and analyze algorithms for
% selecting $(g_{x_i}, g_{s_i})$ using elementary
% Taylor-series analysis of the exponential map, as oppose
% to more typical machinery of manifold optimization.
\paragraph{Log-domain interpretation}
The LP update~\eqref{eq:lpupdate} is  equivalent to addition
in the log-domain. In fact, for LP, the proposed algorithms essentially reduce to 
Newton's method on a log-domain formulation of the central-path conditions, i.e., to
nonlinear equations $f(z) = 0$  induced by 
\[ 
\sqrt{\mu} \exp(z) \in  x_0 + \mathcal{L}, \qquad  \sqrt{\mu}\exp(-z) \in  s_0 + \mathcal{L}^{\perp}.  
\]
We expand on this log-domain formulation in~\cite{permenter2022LogDomainQP}, but it, surprisingly, seems otherwise unanalyzed.  In fact, to our
knowledge, \emph{all}  interior-point methods for LP---in order to
satisfy~\eqref{eq:linupdate}---operate in the Euclidean space $\algName$ as
opposed to the log-domain; see, e.g.,~\cite{mj1999study, peng2009self}.
%(While previous algorithms~\cite{arora2012multiplicative} for LP   have used
%multiplicative updates like~\eqref{eq:lpupdate}, they are only applied to one
%variable, $x_i$ or $s_i$, and are selected using completely different
%techniques.)

\paragraph{Manifold optimization interpretation}
Our algorithms can be stated using basic concepts from Riemannian
geometry and manifold optimization.  In particular, the key steps reduce to
selection of a \emph{tangent vector}  and evaluation of the \emph{Riemannian exponential map}
associated with $\coneName$.  This viewpoint is crucial to generalizing the
presented techniques to arbitrary convex cones and is discussed
in~\Cref{sec:manifold}.

\paragraph{Relationship with IPMs of Nesterov and Todd}
As we will show, our approach has intimate connections with
that of Nesterov and Todd~\cite{nesterov1998primal}. At a high-level, both yield an algorithm with 
$\bigO(\sqrt n)$ complexity that is \emph{scale-invariant} and \emph{primal-dual
symmetric}~\cite{tunccel1998primal}. At a deeper level, we can interpret
our algorithms as~\cite[Section 6]{nesterov1998primal} modified to perform geodesic updates.
Crucially, this modification removes line searches and computation of a \emph{scaling point},
which requires  eigenvalue decomposition.
It also reduces the number of variables, as we can represent
both $x$ and $s$ using  $w \in \coneName$ satisfying $(x, s) = \sqrt \mu(w, w^{-1})$.
As a trade-off, we must evaluate  the exponential function, but this can
be done using an assortment of techniques~\cite{moler1978nineteen}.  Indeed,
early computational experiments show that our implementation competes with {\tt
sdtp3}~\cite{toh2009sdpt3}, a widely used solver based on the
Nesterov-Todd approach.

\newcommand{\cp}{\mu}
\newcommand{\sqrtcp}{\sqrt{\cp}}
\paragraph{Outline}
This paper is organized as follows. Section~\ref{sec:geo} briefly reviews 
the Riemannian geometry of $\coneName$ and provides a general formula for the geodesic update~\eqref{eq:geoupdate}.  Section~\ref{sec:alg} gives an
IPM  based on geodesic updates and establishes its $\bigO(\sqrt{n})$
complexity, log-domain and manifold optimization interpretations, scale
invariance, and relation to the Nesterov-Todd method.  
We also show that selection of $(g_x, g_s)$, like
 selection of a search direction in classical IPMs,
 reduces to orthogonal projection.
Since this procedure conservatively tracks the central path, we
refer to it as our \emph{short-step} algorithm~\cite{wright1997primal}.  In Section~\ref{sec:geodesic},
we study connections between geodesic distance and symmetrized Kullback-Leibler
divergence, proving key results invoked in our short-step analysis.  Leveraging
this study, we describe a less conservative \emph{long-step} algorithm in
Section~\ref{sec:prac} and prove its global convergence and scale invariance; we also
discuss efficient computation of geodesic updates,
construction of feasible points, and other implementation issues.
Finally, Section~\ref{sec:comp} contains computational results and links to an implementation.

\section[]{Geodesic updates for symmetric cones}~\label{sec:geo}
The interior of a symmetric cone $\coneName$, denoted $\inter \coneName$,
can be viewed as a Riemannian manifold by equipping
each $u \in \inter\coneName$ with a local norm $\|\cdot \|_u$
using the \emph{quadratic
representation} $Q(u) : \algName \rightarrow \algName$, the self-adjoint, linear
map induced by $u \in \algName$ via the 
relation $Q(u)v := 2 u\circ (u \circ v) - (u\circ u) \circ v$.  
For $u \in \inter\coneName$, the map $Q(u)$
is also positive definite, leading to the definition
 $\|v\|_u := \|Q(u)^{-1/2} v\|$, where
 $\|w\|:= \sqrt{\langle w, w \rangle}$. 
The local norm $\|\cdot\|_u$ in turn induces an arc-length $L(\gamma)$
for smooth curves $\gamma : [0, 1]  \rightarrow \inter \coneName$
via
\[
  L(\gamma) := \int^1_0 \|\gamma'(t)\|_{\gamma(t)} dt.
\]
We note that this Riemannian geometry is studied 
by~\cite[Chapter 6]{bhatia2009positive} for the cone of positive definite
matrices and by~\cite{lawson2001geometric,
lee2007metric, lim2001riemannian, faraut1994analysis} for general
symmetric cones.

For $u, v \in \inter\coneName$, let $\delta(u, v)$ denote the infimum of $L(\gamma)$
over smooth curves $\gamma(t)$ satisfying $\gamma(0) = u$ 
and $\gamma(1) = v$.  A curve of length $\delta(u, v)$
connecting $u$ and $v$ is called a \emph{geodesic}.
Useful properties are collected below, including explicit formulae
for $\delta(u, v)$  and geodesic curves.
These formulae employ the square root $u^{1/2}$ and inversion $u^{-1}$
operations of the algebra $\algName$, as well as its log and exponential functions.
\begin{lem}[e.g.,~\cite{lawson2001geometric, lee2007metric}]~\label{lem:GeoProp}
  The following statements hold:
  \begin{enumerate}[label= (\alph*)]
    \item~\label{lem:GeoProp:metric}  $\delta(u, v)$ is a metric on $\inter\coneName$.
    \item~\label{lem:GeoProp:normal} Given $u, v \in \inter \coneName$, let  
    $d :=\log Q(u^{-1/2}) v$
      and $g(t) := Q(u^{1/2}) \exp (t  d)$.  The curve
        $g(t)$ is a geodesic from $u$ to $v$, i.e.,
      \[
      g(0) = u, \qquad g(1) = v, \qquad L(g) = \delta(u, v).
      \]
      Further, $\delta(u, v) = \|d\|$.
    \item~\label{lem:GeoProp:scale}  $\delta(u, v) =  \delta(T u, T v)$ for all $u, v \in \inter \coneName$
        and for any automorphism $T$ of $\coneName$,
        i.e., for any invertible, linear map $T: \algName \rightarrow \algName$ satisfying $\{ T z : z \in \coneName\} = \coneName$.

    \item~\label{lem:GeoProp:inverse}  $\delta(u^{-1}, v^{-1}) =  \delta( u, v)$ for all $u, v \in \inter \coneName$
  \end{enumerate}
\end{lem}
\noindent In light of~\ref{lem:GeoProp:metric}, the function $\delta(u, v)$ is called \emph{geodesic distance}.
The vector $d$ in~\ref{lem:GeoProp:normal}  
denotes \emph{normal coordinates} of $v$ at the point $u$.
In light of~\ref{lem:GeoProp:scale}, the inner-product
$\langle v, w \rangle_u := \langle v,  Q(u)^{-1} w \rangle$
associated with $\|\cdot\|_u$ is called a \emph{scale-invariant} or
\emph{affine-invariant} metric for $\coneName$.  
\Cref{lem:GeoProp:inverse} shows inversion is an \emph{isometry}.
Note that $g(0) = u$ and $g(1) = v$ in~\ref{lem:GeoProp:normal}
is immediate from the identities $Q(u^{1/2}) = Q(u)^{1/2}$,
$Q(u^{-1}) = Q(u)^{-1}$, and $Q(u^{1/2}) e = u$; see \Cref{sec:appendix}.

\subsection{Complementary geodesics}
Given $x, s \in \inter\coneName$ satisfying $x \circ s = \mu e$, 
we wish to parametrize geodesics $g_x$ and $g_s$ starting at $x$ and $s$
that satisfy $g_x(t) \circ g_s(t) = \mu e$ for all $t$.
Combining \Cref{lem:GeoProp} with properties of the quadratic
representation $Q(u)$ provides a parametrization in terms of 
$d \in \algName$. We also
express $(x, s)$ using the point $w \in \coneName$ satisfying $(x, s) = \sqrt\mu (w, w^{-1})$.
%\begin{lem}[e.g.,]~\label{lem:GeoCurveProp}
%  For $w \in \inter \coneName$ and $d \in \algName$
%  let $g(t) := Q(w^{1/2}) \exp(td)$. Then,
%  \begin{enumerate}[label= (\alph*)]
%    \item $g(t)$ is the unique geodesic connecting $w$ and $g(1)$.
%    \item $[g(t)]^{-1}$ is the unique geodesic connecting $w^{-1}$ and $g(1)^{-1}$
%      and satisfies $[g(t)]^{-1} =  Q(w^{-1/2}) \exp -td$.
%  \end{enumerate}
%\end{lem}
\begin{prop}~\label{prop:paramGeo}
  For $d \in \algName$, $w \in \inter\coneName$ and $\mu > 0$, let $(x, s) = \sqrt \mu(w, w^{-1})$
  and let
  \begin{align}\label{eq:geoupchar}
    g_x(t) = \sqrt \mu Q(w^{1/2}) \exp (t d), \qquad g_s(t) = \sqrt \mu Q(w^{-1/2}) \exp (-t d).
  \end{align}
  Then, $g_x(t)$ and $g_s(t)$ are geodesics satisfying $g_x(0) = x$,
  $g_s(0) = s$, and $g_x(t) \circ g_s(t) = \mu e$ for all $t \in \mathbb{R}$.
  \begin{proof}

    The condition $g_x(t) \circ g_s(t) = \mu e$  holds from the identity 
    $[Q(u)v]^{-1} = Q(u^{-1}) v^{-1}$, whereas $g_x(0) = x$ and $g_s(0) = s$
    hold from the identities $\exp(0) = e$ and $Q(u^{1/2})e = u$;  see~\Cref{sec:appendix}.
    Finally, that $g_x(t)$ and $g_s(t)$ are geodesic
    follows from~\Cref{lem:GeoProp}~\ref{lem:GeoProp:normal}
    and the identity $Q( (\sqrt c u)^{1/2}) = \sqrt c Q(u^{1/2})$.

  \end{proof}
\end{prop}
\subsection{Newton direction}
\Cref{prop:paramGeo} shows that the geodesic update 
of $(x, s) = \sqrt\mu(w, w^{-1})$ introduced by~\eqref{eq:geoupdate}
is performed by selecting
$d \in \algName$ and evaluating~\eqref{eq:geoupchar} at some $t$.
Since our goal is to decrease violation of the affine constraints $x \in x_0 + \mathcal{L}$
and $s \in s_0 + \mathcal{L}^{\perp}$, a natural choice for $d$ is the  \emph{Newton direction}, 
which we define  by 
substituting~\eqref{eq:geoupchar} into the central-path conditions~\eqref{eq:optcond}
with the linearizations $\exp(d) \approx e + d$ and $\exp(-d)\approx e - d$.
\begin{defn} (Newton Direction)~\label{defn:NewtonDir}
  For $w \in  \inter \coneName$ and $\cp > 0$, the \emph{Newton direction}
  $\newton(w, \cp)$ is the unique $d \in \algName$ satisfying
  \begin{align*} 
    Q(w^{1/2})(e + d)  \in \frac{1}{\sqrtcp} x_0 + \mathcal{L}, \qquad Q(w^{-1/2})(e -d) \in  \frac{1}{\sqrtcp} s_0 + \mathcal{L}^{\perp}.
\end{align*}
\end{defn}
\noindent Uniqueness of $\newton(w, \cp)$  is proven later by \Cref{prop:newProj},
but essentially follows from invertibility of $Q(w^{1/2})$ and $Q(w^{-1/2})$.
Geodesic updates using $\newton(w, \cp)$  are the basis of algorithms
given in~\Cref{sec:alg} and \Cref{sec:prac}.

%\noindent We note that~\cite[Chapter 6]{bhatia2009positive} is an excellent
%reference for this Riemannian geometry when $\algName$ is the Jordan algebra
%of symmetric matrices.
 %, and differential length simplifies to $\|\gamma^{-1/2}(t) \gamma'(t)\gamma^{-1/2}(t)\|$.
%\noindent The metric property of $d$ will allow us to analyze how updates in $\mu$ affect
%convergence of Newton's method, and the formulae for $g(t)$ and $g(t)^{-1}$ will
%allow us to maintain the complementarity relation $x \circ s$ by updating
%these variables along $g(t)$ and $[g(t)]^{-1}$.

\section{Short-step algorithm}\label{sec:alg}
\begin{figure}
  \hspace{-.6cm}
  \begin{minipage}{.45\linewidth}
  \begin{algorithm}[H]
  \SetAlgoLined\DontPrintSemicolon{}
  \SetKwFunction{algo}{shortstep}\SetKwFunction{proc}{Center}
  \SetKwProg{myalg}{Procedure}{}{}
  \myalg{\algo{$w_0, \mu_0, \mu_f$}}{% chktex 
  \vspace{.1cm}
  $w \leftarrow w_0$, $\cp \leftarrow \cp_0$\\
  \While{$\mu  > \mu_f$} 
  {% chktex
    $\cp \leftarrow \frac{1}{k}\cp $ \\
    \For{$i = 1, 2, \ldots, m$}
    {% chktex
      $d \leftarrow  \newton(w,  \cp)$ \\
      $w \leftarrow  Q(w^{1/2}) \exp(d)$ \\
    }
 }
    \Return{$(w, \mu)$}  \\
  }{} 
  %\caption{Path-following algorithm with parameters $(k, m)$
\end{algorithm}
  \end{minipage}
  %\hspace{-1cm}
  \begin{minipage}[t]{.45\linewidth}
    \small
  \vspace{-2.3cm}
    \begin{tabular}[t]{lp{5.1cm}c}
         $\coneName$&  Definition & rank  \\
        \toprule
         $\mathbb{R}^n_{+}$ & $\{ x \in \mathbb{R}^n : x_i \ge 0\}$ & $n$ \\
        $\mathbb{S}^{n}_+$ & $\{ X^2 : X \in \mathbb{R}^{n \times n}, X= X^T \}$  & $n$  \\
         $\mathbb{L}^{m+1}$ & $\{ (x_0, x_1) \in \mathbb{R}  \times \mathbb{R}^m : x_0 \ge \|x_1\|  \}$ & $2$ 
      \end{tabular}

     \vspace{.5cm} 
    \begin{tabular}[t]{lp{3.1cm}l}
         $\coneName$&  $\exp(d)$ & $Q(w^{1/2})\exp(d)$  \\
        \toprule
         $\mathbb{R}^n_{+}$ &  element-wise exp. & $\exp(\log w + d)$\\
         $\mathbb{S}^n_{+}$ &  matrix exponential& $W^{1/2}\exp(D) W^{1/2}$ \\
        $\mathbb{L}^{m+1}$ & replace eigenvalues \newline with $\exp(d_0\pm\|d_1\|)$  & $(2 z z^T - (\det z) R) \exp(d)$
      \end{tabular}
  \end{minipage}
  \caption{Short-step  algorithm (left) with parameters $(k, m)$  and
  implementation details (right) for linear programs $(\mathbb{R}^n_{+})$,
  second-order-cone programs $(\mathbb{L}^{m+1})$, and semidefinite
  programs $(\mathbb{S}^n_{+})$.  In the  $\mathbb{L}^{m+1}$ row, the map $R$
  denotes $(u_0, u_1) \mapsto (u_0, -u_1)$, while $z= w^{1/2}$ and $\det z =
  z^2_0 - \|z_1\|^2$. }~\label{alg:barrier}
    %// We use the formula from Example 11.12 of "Formally Real Jordan Algebras
    %// and Their Applications to Optimization"  by Alizadeh, which states the quadratic
    %// representation of x equals the linear map
    %//                          2xx' - (det x) * R
    %// where R is the reflection operator R = diag(1, -1, ..., -1) and det x is the determinate
    %// of x = (x0, x1), i.e., det x = x0^2 - |x1|^2.
    %Real det_x = x(0) * x(0) - x.tail(order - 1).squaredNorm();
    %EigenType z = det_x * y;
    %z(0) *= -1;
    %return (2 * x.dot(y)) * x + z;
\end{figure}
We give a procedure {\tt shortstep} (\Cref{alg:barrier}) for tracking the central path
that employs the geodesic updates described by Proposition~\ref{prop:paramGeo}.
Per this proposition, it updates $w \in \inter \coneName$ satisfying
\begin{align}\label{eq:xfroms}
    x  =  \sqrtcp w, \qquad  s = \sqrtcp  w^{-1}
\end{align}
 via $w \leftarrow  Q(w^{1/2}) \exp(t d)$, or, equivalently, 
via $w^{-1} \leftarrow Q(w^{-1/2}) \exp(- td)$. At each iteration, it sets $d$
equal to the Newton direction  $\newton(w, \mu)$ and the step-size  $t$  equal to one, i.e.,
it performs a full \emph{Newton step}.
By construction, each iterate $w$
induces via~\eqref{eq:xfroms} 
variables $x$ and $s$ satisfying $x \circ s = \mu e$. 
Each Newton step   in turn aims to reduce
the violation of the affine constraints $x \in x_0 + \mathcal{L}$  and
$s \in s_0 + \mathcal{L}^{\perp}$.

 The inputs are  an initial $w_0 \in \inter
\coneName$ and centering parameters $\mu_0, \mu_f \in \mathbb{R}$ satisfying $\mu_0 > \mu_f > 0$.  The output is an
approximation of the centered point $\hat w(\mu)$ for $\mu \le \mu_f$, where
$\hat w(\mu)$ denotes the unique point satisfying $\sqrt\mu (\hat w(\mu), \hat w(\mu)^{-1}) = (\hat x(\mu), \hat s(\mu))$ for $(\hat x(\mu),
\hat s(\mu))$ on the central path.  
 Behavior depends on a parameter  
$k$ that controls how much $\mu$ decreases at each \emph{outer iteration}
and a parameter $m$ that denotes the number of \emph{inner iterations}.
Like short-step IPMs~\cite{wright1997primal}, our analysis will 
 choose $k$ conservatively   and assume that $w_0 = \hat w(\mu_0)$.
A more aggressive algorithm that supports arbitrary initialization by using 
damped updates $(t < 1)$ appears in Section~\ref{sec:prac}.

To establish convergence results, we need two lemmas whose proofs we postpone to
Section~\ref{sec:geodesic}. They employ the function 
$\qup : \mathbb{R} \rightarrow \mathbb{R}_+$ and
its nonnegative inverse $\qup^{-1} : \mathbb{R}_+ \rightarrow \mathbb{R}_+$
defined via:
 \[
  \qup(u) := 2(\cosh(u) - 1), \qquad \qup^{-1}(u)  :=  \cosh^{-1} (1 + \frac{1}{2}u ).
\]
In passing, we observe that $q(u) \ge u^2$ for all $u \in \mathbb{R}$ and $\sqrt u \ge q^{-1}(u)$ 
for all $u \ge 0$.
The first lemma  bounds the geodesic distance between two centered points
$\hat w(\cp_0)$ and $\hat w(\cp_1)$ using the rank  of $\coneName$ (denoted by $n$) and the
ratio $k$ of the centering parameters.
\begin{restatable}[$\mu$-update]{lem}{centeringUpdate}\label{lem:centeringUpdate}
  Let $\mu, k > 0$. Then,
  $\frac{1}{n} \delta\left(\hat w(\mu), \hat w( \frac{1}{k} \mu)\right)^2 \le  \qup(\frac{1}{2}\log k)$.
\end{restatable}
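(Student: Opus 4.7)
The plan is to derive a scalar identity that couples $\hat w(\mu)$ and $\hat w(\mu/k)$ through a single trace, then recognize both sides in terms of the eigenvalues of $Q(\hat w(\mu)^{-1/2})\hat w(\mu/k)$, and finally invoke the elementary inequality $q(t) \ge t^2$.

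\textbf{Step 1: Orthogonality identity.} Both $\hat x(\mu) - \hat x(\mu/k)$ and $\hat s(\mu) - \hat s(\mu/k)$ lie in the orthogonal subspaces $\mathcal{L}$ and $\mathcal{L}^\perp$ respectively, so their inner product vanishes. Expanding and using $\langle \hat x(\mu), \hat s(\mu)\rangle = \tr(\mu e) = n\mu$, I get
\[
n\mu + n\mu/k = \langle \hat x(\mu), \hat s(\mu/k)\rangle + \langle \hat x(\mu/k), \hat s(\mu)\rangle.
\]
Now I substitute $\hat x(\nu) = \sqrt{\nu}\,\hat w(\nu)$ and $\hat s(\nu) = \sqrt{\nu}\,\hat w(\nu)^{-1}$ and simplify. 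Writing $u := \hat w(\mu)$, $v := \hat w(\mu/k)$, the cross terms collapse to $\frac{\mu}{\sqrt k}(\langle u, v^{-1}\rangle + \langle v, u^{-1}\rangle)$, and the left side to $n\mu(1+1/k)$. Dividing and using $\sqrt k + 1/\sqrt k = 2\cosh(\tfrac12 \log k)$ yields
\[
\langle u, v^{-1}\rangle + \langle v, u^{-1}\rangle = 2n\cosh(\tfrac12 \log k).
\]

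\textbf{Step 2: Reinterpret via eigenvalues of $z := Q(u^{-1/2})v$.} Using $Q(u^{-1/2})e = u^{-1}$ and self-adjointness of $Q(u^{-1/2})$, I rewrite $\tr z = \langle e, Q(u^{-1/2}) v\rangle = \langle u^{-1}, v\rangle$. Since $[Q(u^{-1/2})v]^{-1} = Q(u^{1/2}) v^{-1}$ (an identity from the appendix), the same argument gives $\tr z^{-1} = \langle u, v^{-1}\rangle$. If $\lambda_1, \ldots, \lambda_n > 0$ are the eigenvalues of $z$, then the identity from Step 1 becomes
\[
\sum_{i=1}^n \bigl(\lambda_i + \lambda_i^{-1}\bigr) = 2n\cosh(\tfrac12 \log k),
\qquad \text{i.e.,} \qquad
\sum_i \cosh(\log \lambda_i) = n \cosh(\tfrac12 \log k).
\]
Subtracting $n$ from both sides and multiplying by $2$ produces $\sum_i q(\log \lambda_i) = n\cdot q(\tfrac12 \log k)$.

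\textbf{Step 3: Convert to a bound on $\delta$.} By \Cref{lem:GeoProp}\ref{lem:GeoProp:form}, $\delta(u,v)^2 = \|\log z\|^2 = \sum_i \log^2 \lambda_i$. The Taylor expansion $\cosh t = 1 + \sum_{d\ge 1} t^{2d}/(2d)!$ shows that $q(t) = 2(\cosh t - 1) \ge t^2$ termwise, so $\log^2 \lambda_i \le q(\log \lambda_i)$. Summing and dividing by $n$ gives $\frac{1}{n}\delta(u,v)^2 \le q(\tfrac12 \log k)$, which is the claim.

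The only nontrivial step is Step 1--the recognition that path orthogonality in $\mathcal{L}/\mathcal{L}^\perp$ translates, after the $\sqrt\mu$ rescaling, into exactly the symmetrized quantity $\tr z + \tr z^{-1}$ and that this coincides with the hyperbolic cosine sum governing $q$. Once that identity is in hand, comparing $q$ to the squaring map is immediate.
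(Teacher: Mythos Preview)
Your proof is correct and follows essentially the same route as the paper: the orthogonality identity in Step~1 is exactly the computation behind \Cref{thm:centeringUpdateDiv}, and Steps~2--3 reproduce the content of \Cref{lem:divSpec} and the lower bound in \Cref{lem:divdistBound}. The only difference is that the paper packages these pieces through the divergence function $h(z_0,z_1)$ and then invokes $\delta^2 \le h$, whereas you inline everything directly in terms of the eigenvalues of $Q(u^{-1/2})v$.
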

\noindent The second lemma establishes a region of quadratic convergence 
of the sequence $w_0, w_1, \ldots, w_m$ generated by Newton steps (inner iterations). 
\begin{restatable}[Centering]{lem}{newtonUpdate}\label{lem:newtonUpdate}
  For $\cp > 0$ and $w_0 \in \inter \coneName$, recursively define $w_i$
  via the iterations $w_{i+1} = Q(w_i^{1/2}) \exp(\newton(w_i, \mu))$.
  If $\delta\left(w_0, \hat w(\mu)\right) \le
  \qup^{-1}(\beta)$ for $0 \le \beta \le \frac{1}{2}$, then $\delta(w_i, \hat w(\cp))^2 \le  \beta^{2^i}$.
\end{restatable}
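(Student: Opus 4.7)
The plan is to prove quadratic convergence of Newton's method by induction on $i$, using the intermediate quantity $\rho_i := q(\delta(w_i, \hat w(\mu)))$. Since the Taylor series gives $q(t) = 2(\cosh t - 1) = t^2 + O(t^4) \ge t^2$, the conclusion $\delta(w_i, \hat w(\mu))^2 \le \beta^{2^i}$ is implied by the stronger bound $\rho_i \le \beta^{2^i}$, and the hypothesis $\delta(w_0, \hat w(\mu)) \le q^{-1}(\beta)$ is equivalent to $\rho_0 \le \beta \le 1/2$. The real work is thus to establish the contraction $\rho_{i+1} \le \rho_i^2$ whenever $\rho_i \le 1/2$; this then propagates through the induction, since $\rho_i^2 \le 1/4 \le 1/2$ keeps us in the basin.

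For the contraction, I would invoke scale invariance of $\delta$ (\Cref{lem:GeoProp}\ref{lem:GeoProp:scale}) via the automorphism $T = Q(w_i^{-1/2})$ to normalize the iterate to $e$. In this frame, the problem data becomes $(x_0', s_0', \mathcal{L}') := (Q(w_i^{-1/2}) x_0, Q(w_i^{1/2}) s_0, Q(w_i^{-1/2}) \mathcal{L})$ with $(\mathcal{L}')^\perp = Q(w_i^{1/2}) \mathcal{L}^\perp$ by self-adjointness of $Q$; the transported optimum is $\exp v$ with $\|v\| = \delta(w_i, \hat w(\mu))$, and $\delta(w_{i+1}, \hat w(\mu)) = \delta(\exp d, \exp v)$ where $d$ is the Newton direction. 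Subtracting the central-path conditions on $\exp(\pm v)$ from the linearized Newton conditions (\Cref{defn:NewtonDir}) on $e \pm d$ places the residuals $\exp v - (e+d) \in \mathcal{L}'$ and $\exp(-v) - (e-d) \in (\mathcal{L}')^\perp$. Orthogonality then produces the key identity
\[
  \|\exp v - (e+d)\|^2 + \|\exp(-v) - (e-d)\|^2 = \|\exp v + \exp(-v) - 2e\|^2 = 4\|\cosh v - e\|^2,
\]
which, after diagonalizing $v$ in a Jordan frame and comparing power series coefficient-by-coefficient, is dominated by $(2(\cosh\|v\|-1))^2 = q(\|v\|)^2 = \rho_i^2$. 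Both residuals are therefore at most $\rho_i$ in norm.

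The main obstacle is the final step: converting these algebraic residual bounds into a bound on the geodesic quantity $\rho_{i+1} = q(\delta(\exp d, \exp v))$, where by \Cref{lem:GeoProp}\ref{lem:GeoProp:form} we have $\delta(\exp d, \exp v) = \|\log Q(\exp(-d/2)) \exp v\|$. I expect to handle this via the Jordan-algebra fundamental formula $Q(Q(u)v) = Q(u)Q(v)Q(u)$ together with a spectral expansion of $\log$ and $\exp$ around small arguments---using that $\|d\|, \|v\| \le q^{-1}(1/2)$ keeps the expansions well-behaved---so that the quartic-and-higher remainder terms can be absorbed. The particular choice $q(t) = 2(\cosh t - 1)$ appears tailored so that these remainders vanish cleanly, yielding the sharp contraction $\rho_{i+1} \le \rho_i^2$.
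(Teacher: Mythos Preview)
Your orthogonality identity is correct and insightful, but the final step you flag as ``the main obstacle'' is a genuine gap, and the paper resolves it by choosing a different intermediate quantity. The paper does \emph{not} track $\rho_i = q(\delta(w_i,\hat w(\mu)))$; it tracks the \emph{divergence}
\[
  h(z_0,z_1) := \langle z_0, z_1^{-1}\rangle + \langle z_0^{-1}, z_1\rangle - 2n,
\]
which satisfies the two-sided bound $\delta^2 \le h \le q(\delta)$ (\Cref{lem:divdistBound}). With this choice the hypothesis $\delta(w_0,\hat w(\mu)) \le q^{-1}(\beta)$ gives $h_0 \le \beta$, and the conclusion $\delta_i^2 \le h_i$ is immediate---so the ``geodesic-to-algebraic'' conversion you worry about simply never arises. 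The contraction $h_{i+1}\le h_i^2$ (\Cref{thm:newtonConv}) comes from two ingredients: (i) for $f(t)=h(Q(w^{1/2})\exp td,\hat w(\mu))$ one has closed-form derivatives $f^{(m)}(t)=\langle a(t)+(-1)^m a(t)^{-1}, d^m\rangle$, which combined with $f'(0)=-(f(0)+\|d\|^2)$ and Taylor's theorem yield $f(1)\le\tfrac12\|d\|_\infty^2 f(0)$ when $\|d\|_\infty^2\le 2$; (ii) the lower bound $h\ge \|d\|^2/(1+\|d\|)$ forces $\|d\|\le 1$ once $h\le \tfrac12$.

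Your route, by contrast, produces Euclidean residual bounds $\|\exp v-(e+d)\|,\|\exp(-v)-(e-d)\|\le\rho_i$ and then needs to manufacture a bound on $q(\|\log Q(\exp(-d/2))\exp v\|)$. In the non-associative setting there is no clean relation between $\log Q(\exp(-d/2))\exp v$ and $v-d$, so the ``fundamental formula plus spectral expansion'' sketch does not obviously close; at minimum it would require new estimates not in the paper. The divergence $h$ is precisely the device that sidesteps this: in your normalized frame $h(e,\exp v)=\tr(2\cosh v-2e)=\sum_j q(\lambda_j)$, i.e., it is the $\ell_1$ analogue of your $\rho=q(\|\lambda\|_2)$, and its trace form is what makes the derivative calculus in (i) work.
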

\noindent Our main result follows from these lemmas,
the triangle inequality for geodesic distance $\delta$,
and an inequality relating the scalar functions $q^{-1}$ and $\sqrt{u}$.
\begin{thm}[Main Result]~\label{thm:barrier}
  Let  {\tt shortstep} (\Cref{alg:barrier}) have parameters $(k, m)$ that satisfy, for
  some  $\frac{1}{2} \ge  \beta > 0$ and $\qup^{-1}(\beta) > \epsilon > 0$, the conditions
  \begin{align}\label{eq:mainass}
    \beta^{2^m}  \le \epsilon^2, \qquad \frac{1}{2} \log k = \qup^{-1}(\frac{1}{n} \zeta^2),
  \end{align}
  where $\zeta := \qup^{-1}(\beta) - \epsilon$.  Then, 
  the following statements hold for {\tt shortstep} given input $(\hat w(\cp_0), \cp_0, \cp_f)$:
  \begin{enumerate}[label= (\alph*)]
      \item At most  $m \lceil c^{-1} \sqrt{n} \log \frac{\cp_0}{\cp_f} \rceil$ Newton steps execute,
        where $c := 2\qup^{-1}(\zeta^2)$.
      \item  The output $(w, \mu)$ satisfies $\delta(w, \hat w(\cp)) \le \epsilon$ and $\cp \le \cp_f$.
        Further,
        \[
          \delta( \sqrtcp w , \hat x(\cp) ) \le \epsilon, \qquad \delta(  \sqrtcp w^{-1}  , \hat s(\cp) ) \le \epsilon,
        \]
        where $(\hat x(\cp), \hat s(\cp))$ denotes the solution to the central-path conditions~\eqref{eq:optcond}.
    \end{enumerate}
  \begin{proof}
    Let $\gamma = (\log k)^{-1} \log \frac{\cp_0}{\cp_f} $.
    The number of outer iterations is at most $\lceil \gamma \rceil$.  
    To upper bound $(\log k)^{-1}$, we first note that for any $a, b \in
    \mathbb{R}$ satisfying $0 \le a \le b$, 
    \[
      \qup^{-1}(a) \ge \frac{\qup^{-1}(b)}{\sqrt b} \sqrt{a},
    \]
    since $\qup^{-1}(u)/\sqrt u$ is a decreasing function.
    Setting  $a = \frac{1}{n}\zeta^2$ and $b = \zeta^2$ and using~\eqref{eq:mainass} gives
    \[
      \frac{1}{2}  \log k =  \qup^{-1}(\frac{1}{n} \zeta^2)  \ge  \frac{\qup^{-1}(\zeta^2)}{\zeta}  \frac{\zeta}{\sqrt n} = \frac{\qup^{-1}(\zeta^2)}{\sqrt n}.
    \]
      Hence, $(\log k)^{-1} \le  c^{-1} \sqrt n$ for $c = 2 \qup^{-1}(\zeta^2)$, proving the first statement.

    We prove the next statement using induction on outer iterations, observing first that
    \begin{align}\label{eq:proofMainCent}
      \delta(\hat w(\mu), \hat w( k^{-1} \mu))^2 \le n [ q(\frac{1}{2} \log k)] = n \frac{1}{n}\zeta^2 =  (\qup^{-1}(\beta) - \epsilon)^2
    \end{align}
    by \Cref{lem:centeringUpdate} and our choice of $k$.
    Now let $w_i^{\mu}$ denote $w$ at the end of inner iteration $i$ for the current $\cp$
    and let $\mu' := k^{-1} \mu$.
    Make the inductive hypothesis that $\delta(w_m^{\cp},  \hat w(\cp))  \le \epsilon$.
    Then,
    \begin{align*}
      \delta(w_m^{\cp},  \hat w(\mu') )  \le \delta(w_m^{\cp},  \hat w(\cp)) + \delta(\hat w(\mu), \hat w(\mu'))  \le \epsilon  +\qup^{-1}(\beta) - \epsilon = \qup^{-1}(\beta),
    \end{align*}
    where the second inequality follows from~\eqref{eq:proofMainCent} 
    and the first is the triangle inequality (\Cref{lem:GeoProp}\ref{lem:GeoProp:metric}).
    This allows us to update $\mu$ to $\mu'$, restart inner iterations at $w^{\mu'}_0 =w^{\mu}_m$,  
    and use \Cref{lem:newtonUpdate}  to conclude that
    \[
      \delta(w_m^{\mu'},  \hat w( \mu' ))^2 \le  \beta^{2^m}  \le \epsilon^2,
    \]
    where the second inequality follows from our choice of $ m$.
    Hence we've show if  $\delta(w_m^{\cp},  \hat w(\cp))  \le \epsilon$,
    then $\delta(w_m^{\mu'},  \hat w(\mu'))  \le \epsilon$.
    The base case holds by identical argument using the assumption that $w_0 = \hat w(\mu_0)$.
    Hence, $\delta(w,  \hat w(\cp))  \le \epsilon$ holds at termination. That
       $\delta( \sqrtcp w , \sqrtcp \hat w(\cp) ) \le \epsilon$ 
      and $\delta( \sqrtcp w^{-1} , \sqrtcp \hat w^{-1}(\cp) ) \le \epsilon$
      follows from the invariance of $\delta$ under rescaling and inversion; see \Cref{lem:GeoProp}\ref{lem:GeoProp:scale}-\ref{lem:GeoProp:inverse}.
  \end{proof}
\end{thm}
The remainder of this section gives other properties of {\tt shortstep},
namely,  log-domain and manifold optimization interpretations, an orthogonal decomposition of the 
Newton direction, and scale invariance. We also discuss connections with an
algorithm of Nesterov~and~Todd.

\subsection{Log-domain interpretation}
Suppose that~\eqref{sdp:main} is a primal-dual pair of \emph{linear
programs}, i.e., that $\coneName = \mathbb{R}^n_{+}$. Under this assumption, the algebra 
$\algName$ is associative.  Hence,  geodesic distance simplifies to $\delta(u,
v) = \| \log u - \log v\|$,
and the geodesic update $w \leftarrow
Q(w^{1/2}) \exp(d)$ satisfies
\begin{align}\label{eq:loggeo}
  \log  \left( Q(w^{1/2}) \exp(d) \right)  = \log (w \circ \exp (d)) =  \log (w) + d,
\end{align}
i.e., it reduces to addition in the log domain. 
The Newton direction $\newton(w, \mu)$ also has a log-domain interpretation: it
is precisely the direction one obtains by linearizing $x(z) := \sqrt \mu \exp
z$ and $s(z) := \sqrt \mu \exp(-z)$ at $z = \log w$ and substituting into the
central-path conditions~\eqref{eq:optcond}.
\begin{prop}
  Let $\algName$ be associative. For $\mu >0$ and $w \in \inter \coneName$, let $d = \newton(w, \mu)$.
  Then,
  \[
    \exp(z) + J(z)d \in \frac{1}{\sqrtcp} x_0 + \mathcal{L}, \qquad  \exp(-z)- J(-z)d \in \frac{1}{\sqrtcp} s_0 + \mathcal{L}^{\perp},
  \]
  where $z = \log w$ and $J(z) : \algName \rightarrow \algName$ is the Jacobian of $\exp(z)$.
  \begin{proof}
    Under our associativity assumption, we observe that $J(z) d = \exp(z) \circ d$ and
    \[ 
    Q(w^{1/2})(e + d) = w + w \circ d, \qquad  Q(w^{-1/2})(e - d) = w^{-1} - w^{-1} \circ d.
    \]
    Substituting $w = \exp(z)$ and $w^{-1} = \exp(-z)$ and using Definition~\ref{defn:NewtonDir} proves the claim.
  \end{proof}
\end{prop}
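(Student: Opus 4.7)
The plan is to reduce the two claimed inclusions directly to the defining relations of $\newton(w,\mu)$ by showing that, under the associativity assumption, the quadratic-representation form $Q(w^{1/2})(e+d)$ is literally the log-space linearization $\exp v + J(v)d$ with $v = \log w$ (and similarly for the dual block). Thus all the work lies in two algebraic identities; no new optimization or geometric content is needed.

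First I would unpack the quadratic representation. In a Euclidean Jordan algebra we have the general identity $Q(u)y = 2u\circ(u\circ y) - (u\circ u)\circ y$, which under associativity collapses to $Q(u)y = u^2 \circ y$. Applying this with $u = w^{1/2}$ and $u = w^{-1/2}$ yields
\[
Q(w^{1/2})(e+d) = w + w\circ d, \qquad Q(w^{-1/2})(e-d) = w^{-1} - w^{-1}\circ d,
\]
exactly as stated in the proposition's own intermediate formula.

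Next I would verify the Jacobian identity $J(v)d = (\exp v)\circ d$. Because $\algName$ is commutative (always true in a Euclidean Jordan algebra) and associative (by hypothesis), the power-series definition $\exp v = \sum_{k\ge 0} \frac{1}{k!}v^k$ can be differentiated term by term: the derivative of $v^k$ in direction $d$ is $\sum_{j=0}^{k-1} v^j\circ d\circ v^{k-1-j} = k\, v^{k-1}\circ d$, so
\[
J(v)d = \sum_{k\ge 1}\frac{1}{(k-1)!}\,v^{k-1}\circ d = (\exp v)\circ d.
\]
The same computation with $-v$ in place of $v$ gives $J(-v)d = (\exp(-v))\circ d$.

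Finally, substituting $w = \exp v$ and $w^{-1} = \exp(-v)$ into the two collapsed expressions for $Q(w^{\pm 1/2})(e \pm d)$ and using the Jacobian identity yields $w + w\circ d = \exp v + J(v)d$ and $w^{-1} - w^{-1}\circ d = \exp(-v) - J(-v)d$. By \Cref{defn:NewtonDir} these members lie in $\frac{1}{\sqrtcp}x_0 + \mathcal{L}$ and $\frac{1}{\sqrtcp}s_0 + \mathcal{L}^{\perp}$ respectively, which is the desired conclusion. The only real "obstacle" is the Jacobian calculation, and even that is routine once commutativity and associativity are invoked; everything else is a direct substitution.
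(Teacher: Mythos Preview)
Your proposal is correct and follows essentially the same approach as the paper's proof: collapse $Q(w^{\pm 1/2})(e\pm d)$ to $w^{\pm 1}\pm w^{\pm 1}\circ d$ via associativity, identify $J(v)d=(\exp v)\circ d$, and invoke \Cref{defn:NewtonDir}. The only difference is that you spell out the Jacobian computation and the simplification of $Q(u)$ explicitly, whereas the paper simply asserts these identities.
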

\noindent In total, we can reinterpret the inner iterations of {\tt shortstep}
as simply Newton's method applied to the central-path conditions in the log domain.  We elaborate on this 
interpretation (and extend it to quadratic optimization) in the paper~\cite{permenter2022LogDomainQP}.

Observe that when $\algName$ is \emph{not} associative, this interpretation
fails because the identity~\eqref{eq:loggeo} fails. For semidefinite
programming, failure of~\eqref{eq:loggeo} reduces to the fact that
for matrices $W \succ 0$ and $D$, 
\[
  \log \left( W^{1/2} \exp(D) W^{1/2} \right) \ne \log(W) + D,
\]
since, in general, $\exp(A+B) \ne \exp(A) \exp(B)$ for the matrix exponential.

\subsection{Manifold optimization interpretation}\label{sec:manifold}
Geodesic updates can be alternatively described using the \emph{Riemannian exponential map}
  of  $\inter \coneName$.
This function, denoted $\Exp_{u} : \algName \rightarrow \inter\coneName$,
 maps  \emph{tangent vectors} $v \in \algName$ to points on geodesics passing through $u \in \inter\coneName$.
Precisely, $\Exp_{u}(v) = g(1)$, where $g : [0, 1] \rightarrow \inter \coneName$ 
is the geodesic satisfying
\[
  g(0) = u, \qquad \dot g(0) = v,
\]
where $\dot g(t) := \frac{d}{dt} g(t)$.
For a general manifold $\mathcal{M} \subseteq \mathbb{R}^m$, evaluating this map requires
solving a system of 2nd-order ODEs of the form 
\begin{align}~\label{eq:geodesic}
\ddot g_k  + \sum^m_{i=1} \sum^m_{j=1} \Gamma^k_{ij} \dot g_i \dot  g_j = 0, \;\; k = 1, 2, \ldots, m,
\end{align}
where $\Gamma^k_{ij} \in \mathbb{R}$ are the \emph{Christoffel symbols} of $\mathcal{M}$.
For symmetric cones, however, $\Exp_{u}$ has an explicit formula involving the exponential map  of the \emph{algebra} $\algName$:
\[
  \Exp_u(v)  = Q(u^{1/2}) \exp (Q(u^{-1/2})  v).
\]
Further, we can  express geodesic
updates (\Cref{prop:paramGeo}) of the primal-dual variables $(x, s) = \sqrt\mu(w, w^{-1})$
using  $\Exp_x$ and $\Exp_s$.
\begin{prop}
For $w \in \inter\coneName$ and $\mu > 0$,
let $d = \newton(w, \mu)$ and define
\begin{align}\label{eq:tanDef}
  x := \sqrt\mu w, \;\; s := \sqrt\mu w^{-1}, \;\;
  d_x := \sqrt\mu Q(w^{1/2}) d, \;\;
  d_s := -\sqrt\mu Q(w^{-1/2})d.
\end{align}
The following statements hold.
  \begin{itemize}
    \item $\Exp_x (d_x) = \sqrt\mu Q(w^{1/2}) \exp(d)$.
    \item $\Exp_s (d_s) = \sqrt\mu Q(w^{-1/2}) \exp(-d)$.
    \item $\Exp_x (d_x) \circ  \Exp_s (d_s) = \mu e$.
  \end{itemize}
  \begin{proof}
    We first observe that
    \[
      \sqrt\mu Q(w^{1/2})  =  Q(\mu^{1/4} w^{1/2}) =   Q( {(\sqrt\mu w)}^{1/2}) = Q(x^{1/2}),
    \]
    which implies that $Q(x^{-1/2}) = \frac{1}{\sqrt\mu} Q(w^{-1/2})$.
    Evaluating $\Exp_x$ at  $d_x  := \sqrt\mu Q(w^{1/2}) d $ 
    yields
    \[
      \Exp_x (d_x)   = Q(x^{1/2}) \exp (Q(x^{-1/2}) \sqrt\mu Q(w^{1/2}) d)
      = \sqrt\mu Q(w^{1/2}) \exp(d).
    \]
    The second
    statement follows by identical argument.
    The third follows using the first two statements
    and the identity $(Q(w^{1/2}) \exp(d))^{-1}= Q(w^{-1/2}) \exp(-d)$; see \Cref{lem:quad}.
  \end{proof}
\end{prop}

Recalling the definition of the local norm 
      $\|v\|_u := \|Q(u)^{-1/2} v\|$,
 we can also characterize the tangent vectors $d_x$ and $d_s$
without reference to $\newton(w, \mu)$.

\begin{prop}~\label{prop:manifoldTangent}
The tangent vectors $d_x$ and $d_s$
in~\eqref{eq:tanDef} are the unique points in $\algName$ satisfying 
  \[
    x + d_x \in x_0 + \mathcal{L}, \quad
    s + d_s \in s_0 + \mathcal{L}^{\perp},   \quad
    d_s = -\mu Q(x)^{-1} d_x.
  \]
  Further, $\|d\| = \|d_x\|_x$ and $\|d\| = \|d_s\|_s$.
    \begin{proof}
    The first two conditions are immediate
     from definition of  $(x, d_x)$, $(s, d_s)$,
      and the Newton direction $\newton(w, \mu)$.
    To see that $d_s = -\mu Q(x)^{-1} d_x$,
      observe first that
    \[
      \sqrt\mu d =  Q(w^{-1/2})  d_x, \qquad -\sqrt\mu d =  Q(w^{1/2})  d_s.
    \]
    Hence, $-d_s =  Q(w^{-1}) d_x =  Q( \sqrt\mu  x^{-1}) d_x =  \mu Q(x^{-1}) d_x$.
    Uniqueness follows from the fact $Q(x)$ is invertible and the fact
      the first two conditions are equivalent to $\dim \algName$
      linearly independent constraints.

    For the last statement, we have that
    \[
      \|d_x\|^2_x = \langle   Q(x^{-1/2}) d_x, Q(x^{-1/2}) d_x \rangle.
    \]
    But $Q(x^{1/2}) = \sqrt\mu Q(w^{1/2})$. Hence, $Q(x^{-1/2}) d_x = d$, proving that
    $\|d\| = \|d_x\|_x$. That $\|d\| = \|d_s\|_s$ follows by similar argument.
      %\[ 
      %d_s = -\mu Q(x)^{-1} d_x =  -\mu Q(x)^{-1/2} d.
      %\]
      %But $\mu Q(x)^{-1/2}  =  \mu Q(x^{-1})^{1/2} = Q( \mu x^{-1})^{1/2} = Q( s)^{1/2}.
      %Hence
    \end{proof}
\end{prop}
These propositions suggest how {\tt shortstep}
generalizes to non-symmetric cones.
Indeed, any cone with a \emph{log-homogeneous, self-concordant barrier function}
is equipped with a natural Riemannian geometry~\cite{nesterov2002riemannian}
that enables definition of $\Exp_x$ and $\Exp_s$.
The affine constraints characterizing the tangent vectors (\Cref{prop:manifoldTangent}) 
also generalize if one interprets $Q(u)^{-1}$ 
as the Hessian of the barrier function $\log \det u^{-1}$.
One can also interpret $\mu Q(x^{-1})$ in \Cref{prop:manifoldTangent} as the \emph{parallel transport} 
operator from $x$ to $s$, a canonical operation in Riemannian geometry.
An obstruction to implementation, however,
is evaluation of $\Exp_x$ and $\Exp_s$, 
which, as mentioned, may require numerical solution of the ODE system~\eqref{eq:geodesic}.
Our convergence analysis (\Cref{sec:geodesic}) will also leverage
the spectral theory of symmetric cones, and hence does not immediately
generalize.

An alternative generalization, applicable to even symmetric cones, 
replaces $\Exp_u$ with a general \emph{retraction} $R_u : \algName \rightarrow \inter\coneName$.  Retractions
are defined by relaxing the geodesic property of $\Exp_u$. That is, a
retraction $R_u$ smoothly maps a tangent vector $v \in \algName$ to
a point $R_u(v) \in \inter \coneName$ with the property that
the curve $\gamma(t) := R_u(t v)$ satisfies $\gamma(0) = u$ and  $\dot \gamma(0) = v$.
The map $\Exp_u$ is a special case of a retraction for which $\gamma(t)$ is geodesic.
  See~\cite[Chapter 4.1]{absil2009optimization} for more details.

%\begin{lem}
%  The parallel transport from $x$ to $y$ on $\mathbb{S}^n$ is given by
%
%  \[
%    Q(x) Q(M(x^{-1}, y)) 
%  \]
%  where $M(x, y)$ is the geometric mean.
%
%  \begin{proof}
%    Proof: See appendix of Parallel Transport on the Cone Manifold of SPD Matrices for Domain Adaptation 
%and page 11 of https://arxiv.org/pdf/1312.1039.pdf
%  \end{proof}
%
%
%\end{lem}

\subsection{Newton direction via orthogonal projection}
We next derive an orthogonal, direct-sum decomposition of the Newton direction with respect to the subspaces 
$\mathcal{L}_w := \{ Q(\sqrtminvw) u :  u \in \mathcal{L} \}$
and  $\mathcal{L}_w^{\perp} = \{ Q(\sqrtmw) u : u \in \mathcal{L}^{\perp} \}$.
This decomposition establishes both its claimed uniqueness  (Definition~\ref{defn:NewtonDir})
and a formula for its construction via orthogonal projection.
\begin{prop}\label{prop:newProj}
  For $\mu > 0$ and $w \in \inter \coneName$, let
  \[
    d_1 =\proj_{\mathcal{L}^{\perp}_w }\bigg(Q(\sqrtminvw)(\frac{1}{\sqrt{\mu}}x_0 - w)\bigg), \qquad  d_2 =\proj_{\mathcal{L}_w }\bigg(Q(\sqrtmw)(\frac{1}{\sqrt{\mu}}s_0 - w^{-1})\bigg).
\]
  Then the Newton direction $\newton(w, \mu)$ satisfies $\newton(w, \mu) = d_1 - d_2$.
  \begin{proof}
    Let $r_1 = Q(\sqrtminvw)(\frac{1}{\sqrt{\mu}}x_0 - w)$ and $r_2 = Q(\sqrtmw)(\frac{1}{\sqrt{\mu}}s_0 - w^{-1})$.
    By the identity $Q(z^{1/2})e = z$ (\Cref{lem:quad}), the conditions of
    Definition~\ref{defn:NewtonDir} are equivalent to
    \[
      w +  Q(\sqrtmw) d \in \frac{1}{\sqrt \mu} x_0 + \mathcal{L}, \qquad w^{-1} -  Q(\sqrtminvw) d \in \frac{1}{\sqrt \mu} s_0 + \mathcal{L}^{\perp}.
    \]
    Using $Q(z^{-1}) = Q(z)^{-1}$ (\Cref{lem:quad}), we conclude that $d \in r_1  + \mathcal{L}_w$ and $d \in -r_2 + \mathcal{L}_w^{\perp}$.
    Equivalently,
    \[
      d \in  (\proj_{\mathcal{L}_w^{\perp}} (r_1)  + \mathcal{L}_w) \bigcap  (\proj_{\mathcal{L}_w} (-r_2)  + \mathcal{L}_w^{\perp}),
    \]
     since any affine set $z_0 + \mathcal{S}$ satisfies $z_0 + \mathcal{S} = \proj_{\mathcal{S}^{\perp}}(z_0) + \mathcal{S}$.
    Hence, $d$ has the following direct-sum decompositions with respect to $\mathcal{L}_w$ and $\mathcal{L}_w^{\perp}$:
    \[
      d = \proj_{\mathcal{L}_w^{\perp}} (r_1) + d_{\mathcal{L}_w}, \qquad d = \proj_{\mathcal{L}_w} (-r_2)  + d_{\mathcal{L}_w^{\perp}}.
    \]
    Since such decompositions are unique,  $d_{\mathcal{L}_w} = \proj_{\mathcal{L}_w} (-r_2)$, proving the claim.
  \end{proof}
\end{prop}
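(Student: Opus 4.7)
The plan is to directly unpack the defining conditions of $\newton(w,\mu)$ into two affine inclusions for $d$, one in a translate of $\mathcal{L}_w$ and one in a translate of $\mathcal{L}_w^{\perp}$, and then exploit that $\mathcal{L}_w$ and $\mathcal{L}_w^{\perp}$ are orthogonal complements to read off the projections. Uniqueness of $\newton(w,\mu)$, as claimed in Definition~\ref{defn:NewtonDir}, will fall out of the same uniqueness-of-decomposition argument.

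First I would rewrite the two conditions of Definition~\ref{defn:NewtonDir} using $Q(w^{1/2})e = w$ and $Q(w^{-1/2})e = w^{-1}$ to get
\[
Q(w^{1/2})d \in \tfrac{1}{\sqrt{\mu}} x_0 - w + \mathcal{L}, \qquad Q(w^{-1/2})d \in -\bigl(\tfrac{1}{\sqrt{\mu}} s_0 - w^{-1}\bigr) + \mathcal{L}^{\perp}.
\]
Then I would apply $Q(w^{-1/2})$ to the first inclusion and $Q(w^{1/2})$ to the second, using $Q(w^{-1/2}) = Q(w^{1/2})^{-1}$, so that the left-hand sides become simply $d$ and the right-hand sides become $r_p + \mathcal{L}_w$ and $-r_d + \mathcal{L}_w^{\perp}$, respectively.

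The key step is to verify that $\mathcal{L}_w$ and $\mathcal{L}_w^{\perp}$ are in fact orthogonal complements under $\langle\cdot,\cdot\rangle$. For any $x\in\mathcal{L}$ and $s\in\mathcal{L}^{\perp}$, self-adjointness of $Q(w^{1/2})$ (and hence of $Q(w^{-1/2})$) gives $\langle Q(w^{-1/2})x, Q(w^{1/2})s\rangle = \langle x, Q(w^{-1/2})Q(w^{1/2})s\rangle = \langle x, s\rangle = 0$, and a dimension count finishes the orthogonal-complement claim. Given this, the two inclusions tell us that $d - r_p \in \mathcal{L}_w$ and $d - (-r_d) \in \mathcal{L}_w^{\perp}$, so projecting onto $\mathcal{L}_w^{\perp}$ and $\mathcal{L}_w$ respectively yields $\proj_{\mathcal{L}_w^{\perp}}(d) = \proj_{\mathcal{L}_w^{\perp}}(r_p) = d_1$ and $\proj_{\mathcal{L}_w}(d) = \proj_{\mathcal{L}_w}(-r_d) = -d_2$. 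Adding these and using that $d = \proj_{\mathcal{L}_w^{\perp}}(d) + \proj_{\mathcal{L}_w}(d)$ gives $d = d_1 - d_2$; uniqueness of Newton's direction follows at the same time from uniqueness of orthogonal decomposition.

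The only real subtlety I foresee is justifying the orthogonality of $\mathcal{L}_w$ and $\mathcal{L}_w^{\perp}$, which hinges on two facts about the quadratic representation that live in the appendix: self-adjointness of $Q(\cdot)$ and the inverse identity $Q(w^{1/2})^{-1} = Q(w^{-1/2})$. Once these are in hand, every other step is a cosmetic rearrangement, and the identity $z_0 + \mathcal{S} = \proj_{\mathcal{S}^{\perp}}(z_0) + \mathcal{S}$ (valid for any subspace $\mathcal{S}$) cleanly converts the affine inclusions into projection formulas.
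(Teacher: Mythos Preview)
Your proposal is correct and follows essentially the same route as the paper: rewrite the defining conditions as $d\in r_p+\mathcal{L}_w$ and $d\in -r_d+\mathcal{L}_w^{\perp}$, then read off the two components of $d$ via the orthogonal direct-sum decomposition $\algName=\mathcal{L}_w\oplus\mathcal{L}_w^{\perp}$. The only difference is that you explicitly verify $\mathcal{L}_w^{\perp}=Q(w^{1/2})\mathcal{L}^{\perp}$ using self-adjointness and $Q(w^{1/2})^{-1}=Q(w^{-1/2})$, whereas the paper simply asserts this identity just before the proposition; your version is slightly more self-contained but otherwise identical in substance.
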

\noindent This decomposition has immediate practical implications: 
one can use any algorithm for orthogonal projection,
e.g., the Gram-Schmidt process or a least-squares method,  to find
$\newton$. Section~\ref{sec:leastsquares} gives an explicit
linear system for performing this projection using this latter approach.
Further, the size/structure of this linear system matches the size/structure
of linear systems arising in classical IPMs.
%illustrating that the cost of selecting $\newton$ at each iteration of 
%{\tt shortstep} matches the cost of selecting a search direction in a 
%typical IPM\@.
\subsection{Scale invariance}~\label{sub:affineinvar}
For an automorphism $T : \algName \rightarrow \algName$ of $\coneName$,
consider the transformed  primal-dual pair:
\begin{align} \label{sdp:tmain}
\begin{array}{ll}
  \mbox{ minimize } & \langle (T^{-1})^{*}  s_0, x \rangle     \\
  \mbox{ subject to } &  x \in \coneName \cap T( x_0 +   \mathcal{L})
\end{array}  \qquad
\begin{array}{ll}
  \mbox{ minimize } & \langle Tx_0, s \rangle     \\
  \mbox{ subject to } &  s \in \coneName \cap (T^{-1})^{*} ( s_0 + \mathcal{L}^{\perp}),
\end{array}
\end{align}
where $(T^{-1})^{*}: \algName \rightarrow \algName$ denotes the adjoint of $T^{-1} : \algName \rightarrow \algName$.  We next show the
following: if {\tt shortstep} maps input $w_0$ to output $\bar w$ for
the primal-dual pair~\eqref{sdp:main}, then it maps input $Tw_0$ to output  $T
\bar w$ for the transformed pair~\eqref{sdp:tmain}. In other words, it is
\emph{scale invariant} in the sense of~\cite{tunccel1998primal}. 
To show this, we first establish that the Newton direction $\hatnewton(w, \mu)$ 
for the transformed problem satisfies $\hatnewton(T w, \mu) = M \newton(w, \mu)$ 
for an automorphism $M$, dependent on $T$ and $w$, that is
also \emph{orthogonal}, i.e., $M^{-1} = M^*$.  Scale invariance will follow,
leveraging the fact that $\exp (M d) = M \exp(d)$ for any such
$M$~(\Cref{lem:auto}).

To give a formula for  $M$  and to establish its key properties, we use the
decomposition  $\newton(w, \mu) = d_1(w, \mu) - d_2(w, \mu)$ of the Newton
direction from~\Cref{prop:newProj}.  We also decompose the transformed
direction as $\hatnewton(v, \mu) = d_{1, T}(v, \mu)- d_{2, T}(v, \mu)$
by applying~\Cref{prop:newProj} to the transformed problem~\eqref{sdp:tmain}.
\begin{lem}~\label{lem:mapInvar}
  Let $M = Q(Tw)^{-1/2}T Q(w)^{1/2}$
  for $w \in \inter \coneName$
  and an automorphism $T: \algName \rightarrow \algName$ of $\coneName$.
  The following statements hold.
  \begin{enumerate}[label= (\alph*)]
    \item\label{lem:mapInvar:orth} $M$ is an orthogonal automorphism of $\coneName$.
    \item\label{lem:mapInvar:form} $M = Q(Tw)^{1/2}(T^{-1})^* Q(w)^{-1/2}$.
    \item\label{lem:mapInvar:mIdent} For all $\mu > 0$, the Newton directions 
    satisfy $\hatnewton(Tw, \mu) = M\newton(w, \mu)$.
      Further, their direct summands satisfy
      \begin{align*}
        d_{1, T}(Tw, \mu) = Md_1(w, \mu), \qquad d_{2, T}(Tw, \mu) = Md_2(w, \mu).
    \end{align*}
  \end{enumerate}
  \begin{proof}
    That $M$ is an automorphism follows because it is a composition of automorphisms.
   We next verify orthogonality, i.e., that $M^{-1} = M^*$:
    \begin{align*}
      M^* M &=   Q(w)^{1/2} T^* Q(Tw)^{-1}T Q(w)^{1/2} 
           =     Q(w)^{1/2} T^* (T Q(w) T^*)^{-1} T Q(w)^{1/2} 
            = I,
    \end{align*}
    where we've used the identities $Q(Tw) = TQ(w)T^*$ and $Q(w)^{1/2} Q(w)^{-1} Q(w)^{1/2} = I$  (\Cref{lem:quad}).
     Since by construction $M^{*} Q(Tw)^{1/2}(T^{-1})^* Q(w)^{-1/2} =  I$, orthogonality
     implies the next statement.
   
    By  definition of $M$ and the second property,  we conclude that $M
    Q(w)^{-1/2} = Q(Tw)^{-1/2}T$ and $M  Q(w)^{1/2} = Q(Tw)^{1/2}(T^{-1})^*$.
    Combining this with $Me = e$ (Lemma~\ref{lem:auto}) shows
    that both
    \[
      \hatnewton(Tw, \mu)  \in  M  \bigg(\frac{1}{\sqrtcp} Q(w^{-1/2}) x_0 -e + Q(w^{-1/2})\mathcal{L} \bigg) 
      \]
    and
    \[
 \hatnewton(Tw, \mu) \in M\bigg(e - Q(w^{ 1/2})\frac{1}{\sqrtcp}s_0 + Q(w^{ 1/2})\mathcal{L}^{\perp} \bigg).
    \]
    Following the proof of \Cref{prop:newProj}, we conclude
    that $\hatnewton(Tw, \mu)=  M (d_1 - d_2)$, which implies  
     $d_{1,T} = Md_1$ and $d_{2,T} = M d_2$. 
  \end{proof}
\end{lem}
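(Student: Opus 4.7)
The plan is to address the three claims in order, noting that (a) and (b) are self-contained algebraic facts about $M$, while (c) is the real content and will follow by combining these with the projection formula of~\Cref{prop:newProj}.

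For (a), I would first note that $M$ is a composition of three cone automorphisms: $Q(w)^{1/2}$ and $Q(Tw)^{-1/2}$ are automorphisms of $\coneName$ (since $w, Tw \in \interior\coneName$ and quadratic representations of interior elements preserve the cone), and $T$ is an automorphism by assumption. To verify orthogonality, I would compute $M^* M$ directly, using the standard identity $Q(Tw) = T Q(w) T^*$ (valid because $T$ is an automorphism of the Jordan algebra structure) and $Q(w)^{1/2} Q(w)^{-1} Q(w)^{1/2} = I$; these collapse the product to the identity. Part (b) then follows from orthogonality, since $M = (M^*)^{-1}$ and a one-line inversion of $M^* = Q(w)^{1/2} T^* Q(Tw)^{-1/2}$ yields the claimed expression.

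For (c), I would apply \Cref{prop:newProj} to the transformed pair~\eqref{sdp:tmain} with current iterate $Tw$. This writes $\hatnewton(Tw,\mu) = d_{1,T}(Tw,\mu) - d_{2,T}(Tw,\mu)$, where each summand is an orthogonal projection of an explicit residual onto a subspace built from $T\mathcal{L}$ or $(T^{-1})^*\mathcal{L}^{\perp}$. The key observation is that the defining formula for $M$ and its reformulation from (b) give the two identities
\[
  M\, Q(w)^{-1/2} = Q(Tw)^{-1/2} T, \qquad M\, Q(w)^{1/2} = Q(Tw)^{1/2} (T^{-1})^*.
\]
These say precisely that the transformed subspaces equal $M\mathcal{L}_w$ and $M\mathcal{L}_w^{\perp}$, and that the transformed residuals equal $M r_p$ and $M r_d$, where $r_p, r_d$ are the residuals from \Cref{prop:newProj} for the original problem. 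Since $M$ is orthogonal by (a), it commutes with orthogonal projection in the sense that $\proj_{M\mathcal{S}}(Mx) = M\proj_{\mathcal{S}}(x)$, so $d_{i,T}(Tw,\mu) = M d_i(w,\mu)$ for $i=1,2$, and the sum identity follows.

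The main obstacle I anticipate is purely bookkeeping: one must carefully track how the two affine data $(x_0, \mathcal{L})$ and $(s_0, \mathcal{L}^{\perp})$ transform under $T$ and $(T^{-1})^*$ respectively, and verify that each of the two subspaces $\mathcal{L}_w, \mathcal{L}_w^{\perp}$ is mapped to its transformed counterpart by the \emph{same} operator $M$. This is precisely the asymmetry that forces us to use both representation (a) of $M$ (to handle the primal side) and representation (b) (to handle the dual side). Once that symmetry is in place, uniqueness of the direct-sum decomposition into $\mathcal{L}_{Tw,T}$ and $\mathcal{L}_{Tw,T}^\perp$ forces the identification of the summands, closing the proof.
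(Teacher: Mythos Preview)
Your proposal is correct and follows essentially the same approach as the paper: the same computation of $M^*M$ via $Q(Tw)=TQ(w)T^*$ for (a), the same inversion-plus-orthogonality trick for (b), and the same two key operator identities $M\,Q(w)^{\pm 1/2}=Q(Tw)^{\pm 1/2}$ composed with $T$ or $(T^{-1})^*$ for (c). The only cosmetic difference is that in (c) you route through the projection formula of \Cref{prop:newProj} and the identity $\proj_{M\mathcal{S}}(Mx)=M\proj_{\mathcal{S}}(x)$, whereas the paper works directly with the affine characterization of the Newton direction; these are equivalent once $M$ is orthogonal. One small imprecision: $T$ is assumed to be a \emph{cone} automorphism, not a Jordan-algebra automorphism, so the identity $Q(Tw)=TQ(w)T^*$ should be cited from \Cref{lem:quad} rather than justified by $T$ preserving the product.
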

We use this lemma to  show scale invariance of $w \leftarrow
Q(w^{1/2}) \exp( \alpha(d_1, d_2)  d)$, where $d = \newton$ and $\alpha :
\algName \times \algName \rightarrow \mathbb{R}$ is a step-size rule invariant
under transformation by $M$.
\begin{prop}~\label{prop:affine}
  Let $\alpha : \algName \times \algName \rightarrow \mathbb{R}$ be a function satisfying
  $\alpha(d_1, d_2) = \alpha(Md_1, Md_2)$ for any orthogonal automorphism $M : \algName \rightarrow \algName$.
  Then, for any automorphism $T: \algName \rightarrow \algName$,
  $w \in \inter \coneName$, and $\mu > 0$, 
  \[
    Q(\tilde w^{1/2} ) \exp \big( \alpha(\tilde d_1, \tilde d_2) \tilde d\big) = T  Q(w^{1/2}) \exp \big( \alpha(d_1, d_2)  d\big),
  \]
  where $\tilde w = Tw$, $d = \newton(w, \mu)$, $\tilde d = \hatnewton(\tilde w,
  \mu)$, $d_i = d_{i}(w, \mu)$, and $\tilde d_i = d_{i,T}(\tilde w, \mu)$ for $i\in\{1,2\}$.

  \begin{proof}
    Let $M = Q(Tw)^{-1/2}T Q(w)^{1/2}$.
     By Lemma~\ref{lem:mapInvar}, $M$ is an orthogonal automorphism. Hence,
     $\exp (M x) = M \exp (x)$ for all $x$ (\Cref{lem:auto}). Combining this with \Cref{lem:mapInvar}\ref{lem:mapInvar:mIdent} yields
  \begin{align*}
    Q(\tilde w^{1/2}) \exp  (\alpha(\tilde d_1,\tilde d_2 ) \tilde d) &=   Q(\tilde w^{1/2}) \exp (\alpha(M d_1, M d_2) M d)  \\
                                                              &= Q(\tilde w^{1/2})  M \exp  (\alpha( M d_1, M d_2) d).
  \end{align*}
    But $\alpha(M d_1, M d_2) = \alpha(d_1, d_2)$ by assumption and $Q(\tilde w^{1/2})  M = TQ( w^{1/2})$ by definition of $M$ 
    and the identity $Q(u)^{1/2} = Q( u^{1/2})$; see \Cref{lem:quad}.
  \end{proof}
\end{prop}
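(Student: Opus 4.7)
The plan is to reduce the claim to the algebraic identities already established in Lemma~\ref{lem:mapInvar}, so that the whole proof amounts to a bookkeeping of how the automorphism $M := Q(Tw)^{-1/2} T Q(w)^{1/2}$ intertwines the transformed and untransformed Newton data. First I would introduce $M$ and invoke Lemma~\ref{lem:mapInvar}\ref{lem:mapInvar:orth} to record that $M$ is an \emph{orthogonal} automorphism of $\coneName$, then invoke Lemma~\ref{lem:mapInvar}\ref{lem:mapInvar:mIdent} to replace $\hat d$, $\hat d_1$, $\hat d_2$ by $Md$, $Md_1$, $Md_2$ throughout.

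Next I would exploit the two distinguishing features of $M$. Since $M$ is orthogonal, the step-size invariance hypothesis gives $\alpha(\hat d_1,\hat d_2) = \alpha(Md_1,Md_2) = \alpha(d_1,d_2)$, so the scalar step length is the same in both coordinate systems. Also, because any orthogonal automorphism commutes with the exponential map (\Cref{lem:auto}), I can pull $M$ outside the $\exp$:
\[
\exp\bigl(\alpha(\hat d_1,\hat d_2)\,\hat d\bigr) \;=\; \exp\bigl(\alpha(d_1,d_2)\,Md\bigr) \;=\; M\,\exp\bigl(\alpha(d_1,d_2)\,d\bigr).
\]
It remains to show that the outer factors line up, i.e., that $Q(\hat w^{1/2})\,M = T\,Q(w^{1/2})$. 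This is immediate from the definition of $M$ together with the identity $Q(w^{1/2}) = Q(w)^{1/2}$ recorded in \Cref{lem:quad}: multiplying $M = Q(Tw)^{-1/2} T Q(w)^{1/2}$ on the left by $Q(Tw)^{1/2} = Q(\hat w^{1/2})$ gives exactly the desired equality.

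Stringing these three steps together yields
\[
Q(\hat w^{1/2})\exp\bigl(\alpha(\hat d_1,\hat d_2)\hat d\bigr) = Q(\hat w^{1/2})M\exp\bigl(\alpha(d_1,d_2)d\bigr) = T\,Q(w^{1/2})\exp\bigl(\alpha(d_1,d_2)d\bigr),
\]
which is the claim. The only real content is Lemma~\ref{lem:mapInvar} itself, which has already been proven; there is no genuine obstacle left, only the need to apply orthogonality once for $\alpha$ and once for the exp--commutation identity, and to use \Cref{lem:quad} to identify $Q(w)^{1/2}$ with $Q(w^{1/2})$ so that the outer $Q$ factors can be absorbed into $T$.
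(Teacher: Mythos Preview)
Your proposal is correct and follows essentially the same approach as the paper's own proof: introduce $M = Q(Tw)^{-1/2}TQ(w)^{1/2}$, use Lemma~\ref{lem:mapInvar} to identify it as an orthogonal automorphism satisfying $\hat d = Md$ and $\hat d_i = Md_i$, invoke the $\alpha$-invariance hypothesis and the commutation $\exp(Mx)=M\exp x$ from \Cref{lem:auto}, and finish with $Q(\hat w^{1/2})M = TQ(w^{1/2})$ via \Cref{lem:quad}. The only difference is the order in which you apply the $\alpha$-invariance versus the exp-commutation, which is purely cosmetic.
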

\noindent Scale invariance of {\tt shortstep} follows by invoking this result 
at each iteration with the step-size $\alpha(d_1, d_2) = 1$. We will use a nontrivial step-size rule in Section~\ref{sec:prac}.

\subsection{Comparison with the Nesterov-Todd algorithm}\label{sec:ntcomp}
The celebrated algorithm of Nesterov~and~Todd (NT)~\cite[Section 6]{nesterov1998primal},
which extends the linear programming algorithms of~\citet{kojima1989primal}~and~\citet{monteiro1989interior},
shares key properties with {\tt shortstep}: it
is scale invariant, it executes $\bigO(\sqrt{n})$ iterations, it is primal-dual symmetric,
and finding its search direction reduces to orthogonal projection. 
This suggests a fundamental connection with {\tt shortstep}.
In general, iterations of the NT algorithm do \emph{not} satisfy $x = \mu s^{-1}$.
However, if this relation holds, then the NT search direction coincides with our
Newton direction.  Further, its $(x, s)$-update is a first-order approximation
of our geodesic update.

To see this, note that the NT direction is, in the framework of Jordan algebras~\cite[Section 3.2]{gu2011full},
 the unique $(d_x, d_s) \in \algName \times \algName$ satisfying
\begin{equation}~\label{eq:nesteq}
  x + \sqrt{\mu} Q(p^{1/2}) d_x \in x_0 + \mathcal{L}, \;\; s + \sqrt{\mu} Q(p^{-1/2}) d_s \in s_0 + \mathcal{L}^{\perp}, \;\; d_x +  d_s = v^{-1} - v,
\end{equation}
where $p$ is the \emph{scaling point}, defined as $Q(x^{1/2})(Q(x^{1/2})s)^{-1/2}$,
and $v := \frac{1}{\sqrt{\mu}}Q(p^{-1/2}) x$. 
Given $(d_x, d_s)$, the NT algorithm updates  $(x,s)$ to  $(x', s')$, where
\begin{align}~\label{eq:nestupdate}
x' := x + \sqrt{\mu}Q(p^{1/2}) d_x, \qquad s' := s  + \sqrt{\mu} Q(p^{-1/2})  d_s.
\end{align}
Our result follows.
%Further, 
%$(x', s')$ can be viewed as first-order approximations of the geodesic updates 
%\[
%x = \sqrt{\mu}Q(w^{1/2})\exp d,  \qquad s = \sqrt{\mu}Q(w^{-1/2})\exp(-d),
%\]
%from \Cref{prop:paramGeo}.
\begin{prop}
  Let $x, s \in \inter \coneName$  satisfy 
  $x = \mu s^{-1}$ for  $\mu > 0$. Let $w = \frac{1}{\sqrt \mu} x$
  and $d = \newton(w, \mu)$. Then,
  \begin{enumerate}[label= (\alph*)]
    \item $p = w$, where $p$ is the scaling point $Q(x^{1/2})(Q(x^{1/2})s)^{-1/2}$.
    \item  $d_x = d$ and $d_s = -d$ where $(d_x, d_s)$ is the NT direction~\eqref{eq:nesteq}.
    \item  $x' =\sqrt{\mu} Q(w^{1/2})(e  + d)$ and  $s' =  \sqrt{\mu}Q(w^{-1/2})(e -  d)$, where
      $(x', s')$ is the NT update~\eqref{eq:nestupdate} and $e+d$ and $e-d$ are the
      first-order Taylor-expansions of $\exp(d)$ and $\exp(-d)$ at $d = 0$.
  \end{enumerate}
  \begin{proof}
    If $x = \mu s^{-1}$, then the definitions of $w$ and the scaling point $p$ easily
    imply that $p = w$  and $v = e$.  We also conclude that $d_x + d_s = v^{-1} - v =
    0$.  Combining these identities with  $w=Q(w^{1/2}) e$, 
    $w^{-1}=Q(w^{-1/2})e$, and~\eqref{eq:nesteq} yields
\begin{align*}
  \sqrt{\mu}Q(w^{1/2})(e +  d_x) \in x_0 + \mathcal{L}, \qquad \sqrt{\mu}Q(w^{-1/2})(e -  d_x) \in s_0 + \mathcal{L}^{\perp}, 
\end{align*}
    which are the defining conditions of $\newton(w, \mu)$ given by \Cref{defn:NewtonDir}. Hence, $d = d_x$.
    Finally, the claimed formula for $(x', s')$ holds because $x = \sqrt{\mu} Q(w^{1/2})e$ and $s = \sqrt{\mu} Q(w^{-1/2})e$.
  \end{proof}
\end{prop}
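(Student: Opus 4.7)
The plan is to dispatch the three claims in order, relying on two Jordan-algebra identities from Lemma~\ref{lem:quad} (in the appendix): namely, $Q(z^{1/2}) e = z$ and $Q(z^{-1}) = Q(z)^{-1}$. The hypothesis $x = \mu s^{-1}$ will translate into $s = \sqrt{\mu}\, w^{-1}$, which is the bridge linking the NT variables to my variable $w = x/\sqrt{\mu}$.

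For part~(a), I would first simplify the inner expression $Q(x^{1/2}) s$ by substituting $s = \mu x^{-1}$. Reading off $Q(x^{-1/2}) e = x^{-1}$ from the first identity and using $Q(x^{1/2}) Q(x^{-1/2}) = I$ gives $Q(x^{1/2}) x^{-1} = e$, so $Q(x^{1/2}) s = \mu e$. The square root of the scalar multiple $\mu e$ is $\sqrt{\mu}\, e$, and its inverse is $\tfrac{1}{\sqrt{\mu}} e$. Applying $Q(x^{1/2})$ one more time and using $Q(x^{1/2}) e = x$ produces $p = \tfrac{1}{\sqrt{\mu}} x = w$, as claimed.

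For part~(b), once $p = w$ is established, the NT auxiliary vector reduces as $v = \tfrac{1}{\sqrt{\mu}} Q(p^{-1/2}) x = Q(w^{-1/2}) w = e$ (since $w = Q(w^{1/2}) e$ and $Q(w^{\pm 1/2})$ are inverses of each other). Hence $v^{-1} - v = 0$, forcing the coupling condition to $d_s = -d_x$. Substituting $d_s = -d_x$, $p = w$, $x = \sqrt{\mu}\, Q(w^{1/2}) e$, and $s = \sqrt{\mu}\, Q(w^{-1/2}) e$ into the two affine conditions of~\eqref{eq:nesteq} and dividing by $\sqrt{\mu}$ recovers precisely the two defining conditions of $\newton(w, \mu)$ in Definition~\ref{defn:NewtonDir}. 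Uniqueness of the Newton direction (Proposition~\ref{prop:newProj}) then forces $d_x = d$ and $d_s = -d$. Part~(c) follows immediately by plugging $d_x = d$, $d_s = -d$, $p = w$, and the same representations of $x$ and $s$ into the NT update~\eqref{eq:nestupdate}, collecting the factors $e \pm d$ inside $Q(w^{\pm 1/2})$.

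The main obstacle I anticipate is the manipulation in part~(a), since I must justify $Q(x^{1/2}) x^{-1} = e$ and work with the square root and inverse of $\mu e$ using only Jordan-algebraic facts (the algebra is not associative in general). Routing through $Q(x^{-1/2}) e = x^{-1}$ sidesteps this cleanly, because $\mu e$ is a scalar multiple of the identity and its square root and inverse are scalar multiples of $e$ with no ambiguity. Once $p = w$ is in hand, the rest of the proof is algebraic substitution combined with a single appeal to uniqueness of the Newton direction.
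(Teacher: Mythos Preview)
Your proof is correct and follows essentially the same route as the paper's: establish $p=w$ and $v=e$ from the hypothesis $x=\mu s^{-1}$, deduce $d_s=-d_x$, substitute into~\eqref{eq:nesteq} to recover the defining conditions of $\newton(w,\mu)$, and conclude by uniqueness. You give more explicit detail for part~(a) (which the paper simply asserts ``easily''), but the structure and key ideas are identical.
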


Note with the stronger assumption that $x = s^{-1}$, we can similarly interpret
algorithms based on the so-called H..K..M~direction since, in this case, it
coincides with the NT direction~\cite{todd1998nesterov}.  It was introduced
independently by~\citet{helmberg1996interior},~\citet{kojima1997interior}~and~\citet{monteiro1997primal}.
Also note that even if $x = \mu s^{-1}$ fails, the scaling point $p$ still has
a Riemannian interpretation: it is precisely the midpoint of the geodesic
connecting $x$ and $s^{-1}$, or, equivalently, their geometric mean~\cite{lim2000geometric}.
Finally, we note that the NT direction has an alternative
derivation due to \citet{sturm1999symmetric}; see remarks in~\cite{sturm2002implementation}.

\section{Geodesics and divergence}~\label{sec:geodesic}
The  goal of this section is to prove the $\mu$-update and centering lemmas
used in the analysis of {\tt shortstep} (\Cref{alg:barrier}).  Towards this, we
first study a proxy for geodesic distance $\delta(u, v)$  that is easier to
bound during the course of Newton's method.  This proxy generalizes  the
\emph{symmetric Kullback-Leibler divergence} $h(U, V):=\trace(U V^{-1}
+ U^{-1} V - 2I)$ of two zero-mean Gaussian distributions with covariance 
matrices $U$ and $V$, also known as the
\emph{Jeffrey divergence}~\cite{harandi2017dimensionality,
moakher2006symmetric}.  We hence call this proxy \emph{divergence}. 
We define it using the fact that $\tr e$ equals the rank of $\coneName$ (which we've denoted by $n$).
\begin{defn}\label{defn:merit}
  Denote by
  $h(u, v)$ the \emph{divergence} of $u, v \in \inter \coneName$, defined as $h(u, v)  :=   \langle u, v^{-1}\rangle +  \langle u^{-1},  v \rangle - 2n$.
\end{defn}
\noindent Divergence is symmetric and non-negative, i.e., $h(u, v) = h(v, u)$
and $h(u, v) \ge 0$ for all $u, v \in \inter\coneName$. Further, $h(u, v) = 0$
if and only if $u = v$.  However, unlike geodesic distance $\delta(u, v)$, it
is \emph{not} a metric, as  the triangle inequality can fail.

Recall from Lemma~\ref{lem:GeoProp} that geodesic distance satisfies $\delta(u,
v) = \|\log {Q(v^{-1/2})} u\|$.  Equivalently,  $\delta(u, v)^2 = \sum_{\lambda
\in S} \lambda^2$, where $S$ denotes the multiset of eigenvalues of $\log
{Q(v^{-1/2})} u$.  This formula holds for divergence 
if we replace  $\lambda^2$ with the 
upper bound $q(\lambda) :=  2(\cosh(\lambda) -1)$ introduced in \Cref{sec:alg}.
\begin{lem}~\label{lem:divSpec}
For all $u, v \in \inter \coneName$, the divergence satisfies $h(u, v) =
\sum_{\lambda \in S} \qup(\lambda)$, where $S$ is the multiset of eigenvalues
of $\log {Q(v ^{-1/2})} u$. 
\end{lem}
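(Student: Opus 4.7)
The plan is to reduce the divergence $h(z_0, z_1)$ to the trace of a single cone element and its inverse, and then evaluate via a spectral decomposition. Set $u := Q(z_1^{-1/2}) z_0$. Since $z_1 \in \inter \coneName$, the map $Q(z_1^{-1/2})$ is an automorphism of $\inter \coneName$, so $u \in \inter \coneName$ and $\log u$ is well-defined with eigenvalue multiset $S$ by definition.

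The first step rewrites the two inner products appearing in Definition~\ref{defn:merit}. Using self-adjointness of $Q(w)$ and the identity $Q(w^{1/2}) e = w$ (both recalled in \Cref{sec:appendix}),
\[
\tr u = \langle Q(z_1^{-1/2}) z_0, e \rangle = \langle z_0, Q(z_1^{-1/2}) e \rangle = \langle z_0, z_1^{-1} \rangle.
\]
For the inverse, I would invoke the identity $[Q(w)v]^{-1} = Q(w^{-1}) v^{-1}$ (noted in \Cref{sec:alg}) to obtain $u^{-1} = Q(z_1^{1/2}) z_0^{-1}$, and then the same manipulation yields $\tr u^{-1} = \langle z_0^{-1}, z_1 \rangle$. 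Combining these and recalling $2n = 2\tr e$ gives the key intermediate identity
\[
h(z_0, z_1) = \tr u + \tr u^{-1} - 2n.
\]

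The second step is purely spectral. Because $u \in \inter \coneName$, it admits a Jordan spectral decomposition with positive eigenvalues $\mu_1, \ldots, \mu_n$; the logarithm $\log u$ shares its Jordan frame with $u$, and its eigenvalues are precisely $\lambda_i := \log \mu_i$, so $S = \{\lambda_1, \ldots, \lambda_n\}$. Then $\tr u = \sum_i e^{\lambda_i}$ and $\tr u^{-1} = \sum_i e^{-\lambda_i}$, which substituted into the identity above gives
\[
h(z_0, z_1) = \sum_{\lambda \in S} \bigl( e^{\lambda} + e^{-\lambda} - 2 \bigr) = \sum_{\lambda \in S} 2(\cosh \lambda - 1) = \sum_{\lambda \in S} \qup(\lambda).
\]

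I do not anticipate a major obstacle: the argument is entirely mechanical given the Jordan-algebraic identities for $Q$ and the spectral theorem. The only point requiring a brief justification is that $\log u$, $u$, and $u^{-1}$ all share a common Jordan frame, so that their eigenvalues are related by $\mu_i = e^{\lambda_i}$ and $\mu_i^{-1} = e^{-\lambda_i}$ termwise; this is a standard consequence of functional calculus on Euclidean Jordan algebras and can be cited from the appendix.
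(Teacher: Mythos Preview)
Your argument is correct. The paper actually states this lemma without proof, so there is nothing to compare against; your derivation---reducing $h(z_0,z_1)$ to $\tr u + \tr u^{-1} - 2n$ for $u = Q(z_1^{-1/2})z_0$ via the $Q$-identities in \Cref{lem:quad}, and then reading off the $\cosh$ expression from the spectral decomposition---is exactly the intended elementary proof.
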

\noindent This enables us to prove the following bounds relating divergence
to geodesic distance.
\begin{lem}~\label{lem:divdistBound}
Let $u, v \in \inter \coneName$. Then,  ${\delta(u, v)}^2 \le  h(u, v) \le \qup(\delta(u, v))$.
  \begin{proof}
    Let $\lambda \in \mathbb{R}^n$ denote the vector of eigenvalues of $\log {Q(v^{-1/2})} u$.
    The lower bound follows from~\Cref{lem:divSpec} and \Cref{lem:GeoProp}\ref{lem:GeoProp:normal}
    given that $q(\lambda_i) \ge \lambda_i^2$.
    To prove the upper bound, it suffices to show that
    $\sum^n_{i=1} (\cosh(\lambda_i) - 1) \le \cosh(\|\lambda\|) - 1$.
   To begin, consider the upper bound
    \[
      \sum^n_{i=1} (\cosh(\lambda_i) - 1) \le \sup_{\|z\|=\|\lambda\|} \sum^n_{i=1}(\cosh(z_i) - 1).
    \]
    Let $z$ achieve the supremum.  Then it must be a critical point,
    which implies existence of $\gamma \in \mathbb{R}$ satisfying 
   $\gamma z + \sinh(z) = 0$. We conclude that $z_i = 0$ or $|z_i| = c$ for a  constant $c > 0$.
   We now claim that $z_i \ne 0$ and $z_j \ne 0$ implies $i = j$. Suppose otherwise.
   Then we don't change $\|z\|$ by setting
   $z_i = 0$ and $z_j = \sqrt{2} c$. Further, we increase
    $\sum^n_{i=1}(\cosh(z_i) - 1)$ given that $\cosh( \sqrt{2}c   ) - 1 >  2 (\cosh(c) - 1)$,
    contradicting our assumption that $z$ attains the supremum.
  \end{proof}
\end{lem}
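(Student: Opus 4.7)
The plan is to reduce both inequalities to purely scalar facts about the eigenvalues $\lambda_1,\dots,\lambda_n$ of the Jordan algebra element $\log Q(z_1^{-1/2}) z_0$. By \Cref{lem:divSpec}, the divergence decomposes as $h(z_0,z_1) = \sum_i \qup(\lambda_i)$. On the geometry side, \Cref{lem:GeoProp}\ref{lem:GeoProp:form} together with the identity $\|a\|^2 = \tr(a \circ a) = \sum_i \lambda_i(a)^2$ for any $a \in \algName$ gives $\delta(z_0,z_1)^2 = \sum_i \lambda_i^2$ (using symmetry of $\delta$ to interchange $z_0$ and $z_1$ inside $Q$, or noting that $\qup$ and squaring are both even so the sums are insensitive to the global sign change). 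Both inequalities then become scalar inequalities in $\lambda \in \mathbb{R}^n$: the lower bound reads $\sum_i \lambda_i^2 \le \sum_i \qup(\lambda_i)$, and the upper bound reads $\sum_i \qup(\lambda_i) \le \qup(\|\lambda\|)$.

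The lower bound is immediate from the Taylor expansion $\qup(t) = 2(\cosh t - 1) = t^2 + \tfrac{t^4}{12} + \cdots$, which gives $\qup(t) \ge t^2$ termwise, so summing over $i$ suffices.

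The upper bound is the real content. I would introduce $f(s) := \cosh\sqrt{s} - 1$ on $[0,\infty)$ so that $\qup(\lambda_i) = 2 f(\lambda_i^2)$ and $\qup(\|\lambda\|) = 2 f(\|\lambda\|^2)$; the desired inequality becomes the superadditivity statement $\sum_i f(\lambda_i^2) \le f\bigl(\sum_i \lambda_i^2\bigr)$. Since $f(0) = 0$ and all arguments are nonnegative, superadditivity on $\mathbb{R}_{\ge 0}$ follows from convexity: writing $a = \tfrac{a}{a+b}(a+b) + \tfrac{b}{a+b}\cdot 0$ and applying Jensen's inequality yields $f(a) \le \tfrac{a}{a+b}f(a+b)$, and adding the symmetric inequality for $b$ gives $f(a)+f(b) \le f(a+b)$, which then extends inductively to $n$ summands. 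The main obstacle is therefore verifying that $f$ is convex on $[0,\infty)$. I would compute $f'(s) = \sinh(\sqrt s)/(2\sqrt s)$ and reduce $f''(s) \ge 0$ to the one-dimensional inequality $u\cosh u - \sinh u \ge 0$ for $u \ge 0$ with $u = \sqrt s$; this last inequality is immediate from the Taylor series, since the coefficient of $u^{2k+1}$ in $u\cosh u$ is $1/(2k)!$ while in $\sinh u$ it is $1/(2k+1)!$, and the difference is nonnegative term by term. This closes the upper bound and completes the proof.
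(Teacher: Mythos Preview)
Your proof is correct, and for the upper bound it takes a genuinely different route from the paper. Both arguments share the same opening: reduce to the eigenvalue vector $\lambda$ via \Cref{lem:divSpec} and \Cref{lem:GeoProp}\ref{lem:GeoProp:form}, and dispatch the lower bound with $\qup(t)\ge t^2$. For the upper bound, the paper maximizes $\sum_i(\cosh v_i - 1)$ over the sphere $\|v\|=\|\lambda\|$ using a Lagrange-multiplier argument, then shows by a coordinate-merging trick that the maximizer concentrates all mass in a single coordinate. You instead substitute $f(s)=\cosh\sqrt{s}-1$ and reduce the claim to superadditivity of $f$ on $\mathbb{R}_{\ge 0}$, which follows from convexity and $f(0)=0$; your Taylor-series check of convexity is fine, and in fact one can note more directly that $f(s)=\sum_{k\ge 1} s^k/(2k)!$ has all nonnegative coefficients, making convexity immediate. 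Your route is shorter and more self-contained, sidestepping the compactness and critical-point analysis; the paper's route is more geometric but requires a bit more care (e.g., handling signs of the $v_i$ in the Lagrange condition). Either argument establishes the lemma.
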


  We also note that $h(u, v)$ shares
  the invariance properties of geodesic distance $\delta(u, v)$.
  It is symmetric with respect to inversion, i.e., $h(u, v) = h(u^{-1}, v^{-1})$.
  Hence, it measures the proximity of  $(w,  w^{-1})$ 
  to the centered-points $ (\hat w(\mu),   \hat w(\mu)^{-1})$ in a primal-dual
  symmetric way, i.e., $h( w, \hat w(\mu)) = h( w^{-1}, \hat w(\mu)^{-1})$.
  It is also scale invariant, meaning $h(T u, T v) = h(u, v)$ for any
  automorphism $T$ of $\coneName$.

\begin{rem}
The quantity $h(v, v^{-1})$ where $v$ is as defined in Section~\ref{sec:ntcomp},
is used to analyze a full-step Nesterov-Todd algorithm~\cite[Section 3.3]{gu2011full}.
  %Also note that for fixed $z_1$, the divergence  $h(z_0, z_1)$ consists of
  %a term $\langle z_0, z_1^{-1}\rangle$ that is linear in $z_0$ and another
  %term $\langle z^{-1}_0, z_1\rangle$ that is linear in $z^{-1}_0$.
  %These symmetry and linearity properties will be exploited to prove
  %the $\mu$-update and centering lemmas.
\end{rem}

\subsection{Divergence along the central path}
Divergence has the following utility: we can  calculate it \emph{exactly}
for two centered points $\hat w(\cp_0)$ and $\hat w(\cp_1)$  even if we do 
not know these points explicitly.  Instead, all we need is the ratio of the centering parameters
$\cp_0$ and $\cp_1$ and the rank of $\coneName$, denoted by $n$.
\begin{thm}~\label{thm:centeringUpdateDiv}
Let $\mu_0, \mu_1 > 0$. Then,
  $\frac{1}{n} h(\hat w(\mu_0), \hat w(\mu_1)) = \qup(\frac{1}{2} \log \frac{\mu_0}{\mu_1})$.
  \begin{proof}
    Let   $u = \hat w(\mu_0)$, $v = \hat w(\mu_1)$ and  $\alpha = \sqrt{\frac{\mu_0}{\mu_1}}$.  Since 
   $\sqrt\mu_0(u, u^{-1})$ and $\sqrt\mu_1(v, v^{-1})$ are feasible,
  \[
    v - \alpha u \in \mathcal{L}, \qquad  v^{-1} - \alpha {u}^{-1} \in \mathcal{L}^{\perp}.
  \]
  Hence,
    $0 = \langle v -  \alpha u, v^{-1} - \alpha {u}^{-1} \rangle  = (1+\alpha^2 )n  - \alpha\langle  v, {u}^{-1} \rangle  -  \alpha\langle u, v^{-1} \rangle$.
    Rearranging shows that
    \[
      \langle  u,  v^{-1} \rangle  +   \langle   {u}^{-1}, v \rangle = n \frac{1+\alpha^2}{\alpha} = n (\alpha + \frac{1}{\alpha}) = 2n(\cosh(\log(\alpha))).
    \]
    Hence, $h(u, v) =  2n(\cosh (\log(\alpha)) - 1)$. 
    Using  $q(t) := 2(\cosh(t) - 1)$ and
    $\log \alpha =\frac{1}{2} \log \frac{\mu_0}{\mu_1}$
    yields:
  \[
 \frac{1}{n} h(u, v) = q(\log \alpha) = q(\frac{1}{2} \log \frac{\mu_0}{\mu_1}).
  \]
  \end{proof}
\end{thm}
\noindent Combining this theorem with the bounds relating divergence
and geodesic distance (\Cref{lem:divdistBound}) lets us prove the $\mu$-update lemma,
which we reproduce below.
\centeringUpdate*
\begin{proof}
  From \Cref{thm:centeringUpdateDiv}, we conclude that
    $\frac{1}{n}h(\hat w(\mu),  \hat w(\frac{1}{k} \mu )) =  q(\frac{1}{2} \log k)$.
  Since $\delta(\hat w(\mu),  \hat w(\frac{1}{k} \mu ))^2 \le h(\hat w(\mu),  \hat w(\frac{1}{k} \mu ))$ by  \Cref{lem:divdistBound}, the claim follows.
\end{proof}
\begin{rem}
Since geodesic distance is invariant under inversion and positive
  rescaling,
 we have, for $(x, s) = \sqrt\mu(w, w^{-1})$, that $\delta(x, \hat x (\mu)) = \delta(s, \hat s (\mu)) = \delta(w, \hat w (\mu))$. This implies that
the lengths $L_x$ and $L_s$ of the primal and dual central paths
also upper bound $\delta(\hat w(\mu_0), \hat w(\mu_1))$, where
\[
  L_x := \int^{\mu_1}_{\mu_0} \| \frac{d}{d \mu} \hat x(\mu)\|_{\hat x(\mu)}   d \mu, \qquad  L_s := \int^{\mu_1}_{\mu_0} \| \frac{d}{d \mu} \hat s(\mu)\|_{\hat s(\mu)}  d \mu,
\]
and $\|v\|_u :=  \| Q(u)^{-1/2 }v\|$.  
Bounds on $L_x$  in terms of $\log(\mu_0/\mu_1)$ \emph{and}
  the (generally unknown) values of the barrier function $\log \det z^{-1}$ 
 at $z=\hat x(\mu_0)$ and $z=\hat x(\mu_1)$ appear in~\cite[Lemma 4.1]{nesterov2008primal}.
\end{rem}

\subsection{Divergence along geodesics}
Fix $\mu > 0$, $w \in \inter \coneName$, and nonzero $d \in \algName$, and define the function
$f : \mathbb{R} \rightarrow \mathbb{R}$
\[
  f(t) =  h\left(Q(w^{1/2}) \exp(t d), \hat w (\mu) \right).
\]
That is, let $f(t)$ return the divergence between the centered point $\hat w(\cp)$  and points
on the geodesic induced by $(w, d)$.  Though we don't know $\hat w(\cp)$
and hence cannot evaluate $f$, we can still establish crucial properties, such as its strict convexity.
\begin{lem}~\label{lem:strictConvex}
  The function $f$ is strictly convex.
  \begin{proof}
    Let $a:=Q(w^{1/2}) \hat w (\mu)^{-1}$ and let  $\sum^n_{i=1} \lambda_i e_i$
    denote the spectral decomposition of $d$.  Then,
\[
  f(t) + 2n = \langle a, \exp(td)\rangle + \langle a^{-1}, \exp(-td)\rangle =   \sum^n_{i=1} \exp(t\lambda_i) \langle  a, e_i \rangle  + \exp(-t\lambda_i) \langle a^{-1}, e_i  \rangle.
\]
But $\langle  a, e_i \rangle > 0$ and $\langle a^{-1}, e_i  \rangle >0$ since
    $a, a^{-1} \in \inter \coneName$ and $e_i \in \coneName$, proving the claim
    by strict convexity of the scalar exponential function.
  \end{proof}
\end{lem}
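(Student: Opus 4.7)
The plan is to expand the divergence according to its definition and reduce $f(t)$ to a sum of exponentials with positive coefficients, whereupon strict convexity becomes immediate.

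First I would unpack $f(t)$ using \Cref{defn:merit}. Writing $z_0(t) := Q(w^{1/2}) \exp td$ and $z_1 := \hat w(\mu)$, the definition gives
\[
  f(t) = \langle z_0(t), z_1^{-1}\rangle + \langle z_0(t)^{-1}, z_1\rangle - 2n.
\]
Two algebraic facts from \Cref{sec:appendix} will let me simplify this: $Q(w^{1/2})$ is self-adjoint, and $[Q(u)v]^{-1} = Q(u^{-1}) v^{-1}$, so $z_0(t)^{-1} = Q(w^{-1/2}) \exp(-td)$. Setting $a := Q(w^{1/2}) z_1^{-1}$ and noting $a^{-1} = Q(w^{-1/2}) z_1$, I obtain
\[
  f(t) + 2n = \langle \exp td,\; a\rangle + \langle \exp(-td),\; a^{-1}\rangle.
\]

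Next I would diagonalize using the spectral theorem in $\algName$: write $d = \sum_{i=1}^n \lambda_i e_i$ for a Jordan frame $\{e_i\}$. Then $\exp(\pm td) = \sum_i \exp(\pm t\lambda_i)\, e_i$, yielding
\[
  f(t) + 2n = \sum_{i=1}^n \bigl[\exp(t\lambda_i)\, \langle a, e_i\rangle + \exp(-t\lambda_i)\, \langle a^{-1}, e_i\rangle\bigr].
\]
Since $\hat w(\mu) \in \inter \coneName$ and $w \in \inter \coneName$, both $a$ and $a^{-1}$ lie in $\inter \coneName$; each $e_i$ is a nonzero primitive idempotent in $\coneName$. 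By self-duality of $\coneName$, the inner product of a nonzero element of $\coneName$ with an element of $\inter \coneName$ is strictly positive, so each coefficient $\langle a, e_i\rangle$ and $\langle a^{-1}, e_i\rangle$ is positive.

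Finally, each summand is a positive linear combination of the two exponentials $\exp(t\lambda_i)$ and $\exp(-t\lambda_i)$; these are convex functions of $t$, and strictly convex whenever $\lambda_i \ne 0$. Because $d \ne 0$ by hypothesis, at least one $\lambda_i$ is nonzero, so $f$ is strictly convex as a sum of convex terms at least one of which is strictly convex. The only subtle point is the strict positivity of the coefficients, which rests on self-duality of $\coneName$; once that is in hand the argument is essentially a one-line calculus fact about sums of exponentials.
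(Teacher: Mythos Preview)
Your proposal is correct and follows essentially the same route as the paper: define $a = Q(w^{1/2})\hat w(\mu)^{-1}$, use the spectral decomposition of $d$ to write $f(t)+2n$ as a positive combination of exponentials $\exp(\pm t\lambda_i)$, and invoke positivity of the pairings $\langle a, e_i\rangle$, $\langle a^{-1}, e_i\rangle$. If anything, you are slightly more explicit than the paper in naming self-duality of $\coneName$ as the reason for strict positivity and in spelling out why $d \ne 0$ is needed for \emph{strict} convexity.
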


\noindent 
We can also ensure that $f(t) < f(0)$ for a piecewise step-size rule
involving the \emph{spectral norm} $\|d\|_{\infty}$ of $d$, 
defined as $\|d\|_{\infty} := \max_{\lambda \in S} |\lambda|$
where $S$ denotes the multiset of eigenvalues of $d$.
This rule also incorporates a parameter  $\theta \in (0, 1)$
controlling the transition from full to damped Newton steps. 
\begin{thm}~\label{thm:newton}
  Let $d = \newton(w, \mu)$ and $\alpha=\max\{1, \frac{1}{2\theta}   \|d\|^2_{\infty} \}$
  for $\theta \in (0, 1)$.  The following statements hold. 
  \begin{enumerate}[label= (\alph*)]
    \item~\label{item:thm:newton:full}~If  $\alpha=1$, then $f(1) \le \frac{1}{2} \|d\|^2_{\infty} f(0) \le \theta f(0)$.
    \item~\label{item:thm:newton:damped} $f(1/\alpha) <  f(0)$.
      %+ (t^2-t) \|d\|^2 \le f(0)$.
  \end{enumerate} 
\end{thm}

\noindent To prove this theorem, we'll first provide the derivatives of $f(t)$ and
a descent condition on $t$ for arbitrary $d$.  We then specialize  results
to the Newton direction $\newton(w, \mu)$. 

\begin{rem}
  A function $p : \inter \coneName \rightarrow \mathbb{R}$ is called \emph{geodesically convex} if its
  restrictions to  geodesics are convex in the usual sense, i.e., if $p(
  g(t))$ is a convex function of $t$ for all curves $g(t)$ of the
  form $t \mapsto Q(w^{1/2}) \exp(td)$. 
  This convexity notion is a central concept in manifold optimization~\cite{sra2015conic, wiesel2012geodesic, duembgen2016geodesic}.
  The convexity of $f(t)$ reflects
  the geodesic convexity of the divergence map $w \mapsto h(w, \hat w(\mu))$.
\end{rem}

\subsubsection{Derivatives and descent condition}\label{sec:descent}
\noindent The derivatives $d^m f(t)/(dt)^m$, denoted $f^{(m)}$ for short,  have a concise
form given the role of the exponential function in the definition of $f$.  Interpreting $f(t)$
as the trace of a particular point in $\coneName$ also allows
us to bound \emph{even} derivatives using just $d$ and $f(t)$.
\begin{lem}~\label{lem:costlongt}
  Let $a(t) =  Q(\exp(td))^{1/2}   Q(\sqrtmw) \hat w(\mu)^{-1}$.  The following hold for all $t \in \mathbb{R}$:
  \begin{enumerate}[label= (\alph*)]
    \item\label{lem:costlongt:f} $f(t) = \tr (a(t) + a(t)^{-1} - 2e)$, where  $a(t) + a(t)^{-1} - 2e \in \coneName$.
    \item\label{lem:costlongt:df} $f^{(m)}(t) =   \langle a(t) +  {(-1)}^{m} a(t)^{-1} ,    d^m    \rangle$.
    \item\label{lem:costlongt:dfeven} $f^{(2m)}(t) \le \|d\|^{2m}_{\infty} f(t) + 2 \langle e, d^{2m} \rangle$.
   \end{enumerate}
  \begin{proof}
    By definition of $f$ and divergence (Definition~\ref{defn:merit}), we have
    \[
      f(t) = \langle  Q(\sqrtmw)  \exp(t d)  ,  \hat w(\mu)^{-1} \rangle + 
              \langle  Q(\sqrtminvw)  \exp(-t d)  ,  {\hat w(\mu)} \rangle - 2n.
    \]
    Substituting $\exp(t d)= Q(\exp(td))^{1/2} e$ and $\exp(-t d)= Q(\exp(-td))^{1/2} e$
    yields
    \[
      f(t) = \langle    e  ,   Q(\exp(td))^{1/2}  Q(\sqrtmw)\hat w(\mu)^{-1} \rangle + 
              \langle e , Q(\exp(-td))^{1/2}  Q(\sqrtminvw)   {\hat w(\mu)} \rangle - 2n.
    \]
    Two applications of the identity $[Q(u)v]^{-1} = Q(u^{-1}) v^{-1}$ 
    shows that
    \[
    a(t)^{-1} = Q(\exp(-td))^{1/2}   Q(\sqrtminvw) \hat w(\mu),
    \]
    which proves the trace identity of the first statement.
    That $a(t) + a(t)^{-1} - 2e \in \coneName$
    follows because each eigenvalue has form $\lambda + \frac{1}{\lambda} - 2$ for some $\lambda > 0$, which is always
    nonnegative.
    
    For~\ref{lem:costlongt:df},  we have that $\frac{d^m}{{dt}^m}
    \exp(t d)= d^m \circ \exp(td)= Q(\exp(td))^{1/2} d^m$. 
    This implies that
  \begin{align*}
    \frac{d^m}{dt^m}  \langle e, a(t) \rangle
         &=  \langle Q(\sqrtmw) \hat w(\cp)^{-1} ,\frac{d^m}{{dt}^m} \exp(t d)   \rangle \\
         &=  \langle Q(\exp(td))^{1/2} Q(\sqrtmw) \hat w(\cp)^{-1} ,    d^m    \rangle \\
         &=  \langle a(t) ,    d^m    \rangle.
  \end{align*}
    By similar argument, $\frac{d^m}{dt^m}  \langle e, a(t)^{-1} \rangle = ({-1})^m\langle a(t)^{-1} ,    d^m    \rangle$.
We conclude for all integers $m \ge 1$ that $f^{(m)}(t) =   \langle a(t) +  {(-1)}^{m} a(t)^{-1} ,    d^m    \rangle$.
For statement~\ref{lem:costlongt:dfeven},
  we have, since $a(t)  + a^{-1}(t) - 2e \in \coneName$, 
  \begin{align*}
    f^{(2m)}(t) &= \langle  a(t)  + a^{-1}(t) - 2e, d^{2m} \rangle   + 2 \langle e, d^{2m} \rangle \\
                &\le \|a(t)  + a^{-1}(t) - 2e\|_1 \|d\|^{2m}_{\infty}  + 2 \langle e, d^{2m} \rangle \\
                &= \tr (a(t)  + a^{-1}(t) - 2e) \|d\|^{2m}_{\infty}  + 2 \langle e, d^{2m} \rangle \\
                &= \|d\|^{2m}_{\infty} f(t)   + 2 \langle e, d^{2m} \rangle.
  \end{align*}

  \end{proof}
\end{lem}

Let $f'$ and $f''$ denote the first and second derivatives of $f$.
Assuming $f'(0) < 0$, we now  provide a descent condition on $t$, i.e., 
we establish an interval on which  $f(t) \le f(0)$. Our analysis
rests on Taylor's theorem, convexity of $f$,  and the \Cref{lem:costlongt}  bound on $f''(t)$.

\begin{lem}~\label{lem:descent}
  If $f'(0) < 0$ and $0 \le t \le  \frac{-2f'(0)}{  \|d\|^2_{\infty} f(0) +2  \|d\|^2}$, then $f(t) \le f(0)$.
  \begin{proof}

  By Taylor's theorem, $f(t) = f(0) + f'(0) t + \frac{1}{2} f''(\zeta) t^2$ for
  some $\zeta \in [0, t]$.  Further, 
  \begin{align*}
    f''(\zeta) \le \|d\|^2_{\infty} f(\zeta)   + 2 \|d\|^2  \le  \max_{u \in \{0, t\}} \|d\|^2_{\infty} f(u)   + 2 \|d\|^2,
  \end{align*}
  where the first inequality is \Cref{lem:costlongt}\ref{lem:costlongt:dfeven}
    and the second inequality uses convexity of $f(t)$. Hence,
  \begin{align}~\label{eq:quadbound}
    f(t) \le f(0)  + f'(0) t + \frac{1}{2} \max_{u \in \{0, t\} } (\|d\|^2_{\infty} f(u) +2  \|d\|^2) t^2.
  \end{align}
  Now, let $\hat t$ be the smallest $t > 0$ for which $f(\hat t) = f(0)$. Then
  \[
   f(0) \le f(0)  + \hat t f'(0) + \frac{1}{2}  (\|d\|^2_{\infty} f(0) +2  \|d\|^2) \hat t^2,
  \]
  which implies that
  \[
    \hat t \ge \frac{-2 f'(0)}{\|d\|^2_{\infty} f(0) +2  \|d\|^2}.
  \]
  Since $f(t) \le f(0)$ for all $0 \le t \le \hat t$, the claim follows.
  \end{proof}
\end{lem}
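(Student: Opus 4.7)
The plan is to combine Taylor's theorem with the bound on $f''$ from \Cref{lem:costlongt}\ref{lem:costlongt:dfeven} and with convexity of $f$ (\Cref{lem:strictConvex}). First I would write $f(t) = f(0) + f'(0) t + \tfrac{1}{2} f''(\zeta) t^2$ for some $\zeta \in [0, t]$. Then I would substitute the bound $f''(\zeta) \le \|d\|^2_\infty f(\zeta) + 2 \langle e, d^2 \rangle = \|d\|^2_\infty f(\zeta) + 2\|d\|^2$, using that $\langle e, d^2\rangle = \tr(d^2) = \|d\|^2$. The issue is that $f(\zeta)$ is an unknown intermediate value, so I would dominate it by $\max\{f(0), f(t)\}$ via convexity. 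This produces the implicit inequality
\[
  f(t) \le f(0) + f'(0) t + \tfrac{1}{2}\big(\|d\|^2_\infty \max\{f(0), f(t)\} + 2\|d\|^2\big) t^2.
\]

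The main obstacle is that this bound involves $f(t)$ on both sides, so one cannot simply solve it for $t$. To handle this, I would use a bootstrap/continuity argument. Define $\hat t$ to be the infimum of $t > 0$ such that $f(t) = f(0)$; this is well defined since $f'(0) < 0$ forces $f(t) < f(0)$ near $0$, and $\hat t > 0$. On $[0, \hat t]$ we have $f(u) \le f(0)$ for all $u$, so the $\max\{f(0), f(t)\}$ term collapses to $f(0)$ throughout the interval, and the inequality above, evaluated at $t = \hat t$, becomes the purely quadratic relation
\[
  f(0) \le f(0) + f'(0) \hat t + \tfrac{1}{2}\big(\|d\|^2_\infty f(0) + 2\|d\|^2\big) \hat t^2.
\]

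Cancelling $f(0)$ and dividing by $\hat t > 0$ yields $\hat t \ge -2 f'(0) / (\|d\|^2_\infty f(0) + 2\|d\|^2)$. Since $f(t) \le f(0)$ holds for all $t \in [0, \hat t]$ by definition of $\hat t$, this establishes the claim. The tricky step is really just the bootstrap: choosing $\hat t$ and observing that on $[0, \hat t]$ the convexity bound strips the $f(t)$ dependence out of the right-hand side and turns an implicit inequality into an explicit quadratic lower bound on $\hat t$.
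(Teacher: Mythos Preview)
Your proposal is correct and matches the paper's proof essentially step for step: Taylor expansion, the second-derivative bound from \Cref{lem:costlongt}\ref{lem:costlongt:dfeven}, convexity to replace $f(\zeta)$ by $\max\{f(0),f(t)\}$, and the bootstrap via the first return time $\hat t$. The only cosmetic difference is that you make the well-definedness of $\hat t$ explicit; neither you nor the paper spells out the trivial case $\hat t = \infty$, but there the conclusion is immediate.
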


\subsubsection{Newton direction}
Suppose now that $d = \newton(w, \mu)$. For this direction, we can bound $f'(0)$ 
using $f(0)$ and $\|d\|^2$ by applying the orthogonal, direct-sum decomposition of $d$ 
from \Cref{prop:newProj}. Recall that this decomposition is with respect to  $\mathcal{L}_w := \{ Q(\sqrtminvw) u :  u \in \mathcal{L} \}$
and  $\mathcal{L}_w^{\perp} = \{ Q(\sqrtmw) u : u \in \mathcal{L}^{\perp} \}$. This bound
also provides an updated descent condition for $t$.
\begin{lem}~\label{lem:Newtondesc}
  Suppose that $d = \newton(w, \mu)$. Then $f'(0)  = -(f(0) +  \|d\|^2)$. Further, $f(t) \le f(0)$ if
  \[
0\le    t \le \frac{2(f(0) + \|d\|^2)}{\|d\|^2_{\infty} f(0) +2  \|d\|^2}.
  \]
  \begin{proof}
    Let $r_1(t) = a(t)^{-1} -e$ and $r_2(t) = a(t) - e$, where $a(t)$ is as in
    Lemma~\ref{lem:costlongt}. Then, by
    \Cref{lem:costlongt}\ref{lem:costlongt:df},
    \begin{align*}
      -f' &= \langle  a^{-1} - a, d   \rangle = \langle  a^{-1} -e + e - a, d \rangle = \langle  r_1 - r_2, d \rangle.  
    \end{align*}
    Setting $t=0$ and substituting $d = \proj_{\mathcal{L}^{\perp}_w}r_1(0) -
    \proj_{\mathcal{L}_w} r_2(0)$  using \Cref{prop:newProj} gives
    \begin{align*}
      -f'(0)    = \langle  r_1 - r_2, d \rangle 
                = -\langle r_1, r_2 \rangle +  \|\proj_{\mathcal{L}^{\perp}_w} r_1\|^2 + \|\proj_{\mathcal{L}_w} r_2\|^2 
                = -\langle r_1, r_2 \rangle  +  \|d\|^2.
    \end{align*}
    But $f(t) = -\langle r_1(t), r_2(t) \rangle $ by \Cref{lem:costlongt}\ref{lem:costlongt:f}, proving the first claim.
The descent condition (\Cref{lem:descent}) specialized to the Newton direction
$d = \newton$ proves the second claim.
  \end{proof}
\end{lem}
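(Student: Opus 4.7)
The plan is to reduce the lemma to two short calculations: first, compute $f'(0)$ in closed form, and then invoke the descent bound of \Cref{lem:descent}. The key observation is that at $t = 0$, the residuals appearing in the direct-sum decomposition of $\newton(w,\mu)$ from \Cref{prop:newProj} agree, up to subspace projections, with the objects $a(0) - e$ and $a(0)^{-1} - e$ through which \Cref{lem:costlongt} expresses $f$ and $f'$.

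Concretely, I would set $r_1(t) := a(t)^{-1} - e$ and $r_2(t) := a(t) - e$. From \Cref{lem:costlongt}\ref{lem:costlongt:f} and $a(t) \circ a(t)^{-1} = e$, one sees $f(t) = -\langle r_1(t), r_2(t)\rangle$; \Cref{lem:costlongt}\ref{lem:costlongt:df} with $m=1$ yields $-f'(t) = \langle r_1(t) - r_2(t), d\rangle$. The crux is then to identify the summands $(d_1, d_2)$ of \Cref{prop:newProj} with orthogonal projections of $r_1(0)$ and $r_2(0)$. Using $[Q(u)v]^{-1} = Q(u^{-1})v^{-1}$ to write $a(0)^{-1} = Q(\sqrtminvw)\hat w(\mu)$, together with $Q(\sqrtminvw) w = e$ and the centrality $\hat w(\mu) - \tfrac{1}{\sqrt\mu}x_0 \in \mathcal{L}$, a short computation gives
\[
  Q(\sqrtminvw)\bigl(\tfrac{1}{\sqrt\mu} x_0 - w\bigr) = r_1(0) + \eta, \qquad \eta \in Q(\sqrtminvw)\mathcal{L} = \mathcal{L}_w.
\]
Projecting onto $\mathcal{L}_w^\perp$ kills $\eta$ and gives $d_1 = \proj_{\mathcal{L}_w^\perp} r_1(0)$; a symmetric argument using $\hat w(\mu)^{-1} \in \tfrac{1}{\sqrt\mu}s_0 + \mathcal{L}^\perp$ yields $d_2 = \proj_{\mathcal{L}_w} r_2(0)$.

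With these identifications the rest is inner-product bookkeeping. Since $\mathcal{L}_w$ and $\mathcal{L}_w^\perp$ are orthogonal, $\|d\|^2 = \|d_1\|^2 + \|d_2\|^2$; expanding $\langle r_1(0) - r_2(0), d_1 - d_2\rangle$ and splitting each $r_i(0)$ according to $\mathcal{L}_w \oplus \mathcal{L}_w^\perp$, the diagonal pieces contribute $\|d_1\|^2 + \|d_2\|^2 = \|d\|^2$, while the cross pieces $\langle r_1(0), d_2\rangle + \langle r_2(0), d_1\rangle$ collapse to $\langle r_1(0), r_2(0)\rangle = -f(0)$. This delivers $-f'(0) = \|d\|^2 + f(0)$, which is the first claim. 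Substituting this value of $-f'(0)$ into the bound of \Cref{lem:descent} immediately yields the advertised step-size interval.

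I expect the only nontrivial step to be the identification $d_i = \proj_{\cdot}\, r_i(0)$: it is a few lines of algebra, but blends three facts---the operator identity $Q(w^{1/2})^{-1} = Q(\sqrtminvw)$, the definition of $a(0)$, and the central-path memberships of $\hat w(\mu)$ and $\hat w(\mu)^{-1}$---and it is easy to mix up which residual feeds into which projection. Once that alignment is in place, both the derivative formula and the descent bound follow by essentially mechanical manipulation of inner products.
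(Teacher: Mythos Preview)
Your proposal is correct and follows essentially the same route as the paper: define $r_1(t)=a(t)^{-1}-e$, $r_2(t)=a(t)-e$, identify $d_1=\proj_{\mathcal{L}_w^\perp}r_1(0)$ and $d_2=\proj_{\mathcal{L}_w}r_2(0)$ via \Cref{prop:newProj}, expand $-f'(0)=\langle r_1-r_2,d\rangle$ into $\|d\|^2-\langle r_1,r_2\rangle=\|d\|^2+f(0)$, and then plug into \Cref{lem:descent}. The only difference is that you spell out the residual identification explicitly (using centrality of $\hat w(\mu)$ and the operator identities), whereas the paper invokes \Cref{prop:newProj} directly for that step.
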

\subsubsection{Proof of~\Cref{thm:newton}}
We can now prove the properties of the step-size rule $t=\min\{1, \frac{2\theta}{\|d\|^2_{\infty}}\}$
claimed by Theorem~\ref{thm:newton} for the Newton direction $d=\newton(w, \mu)$
and parameter $\theta \in (0, 1)$.
Assume first that $t = 1$.  Then $\|d\|^2_{\infty}  \le 2\theta < 2$, which, by \Cref{lem:Newtondesc},
implies that $f(1) \le f(0)$. Combining this with the 
quadratic upper bound~\eqref{eq:quadbound} and
$f'(0)  = -(f(0) +  \|d\|^2)$ from~\Cref{lem:Newtondesc} yields
    \[
  f(1) \le f(0) - (f(0) + \|d\|^2) + \frac{1}{2} (\|d\|^2_{\infty} f(0) +2  \|d\|^2)
        =    \frac{1}{2} \|d\|^2_{\infty} f(0) \le \theta f(0),
    \]
which is precisely the claim of~\Cref{thm:newton}-\ref{item:thm:newton:full}.
    Now suppose that $t = \frac{2\theta}{\|d\|^2_{\infty}} \le 1$. 
    By \Cref{lem:Newtondesc} and strict convexity of $f$, we have $f(t) < f(0)$ if
    \begin{align*}
      \frac{2\theta}{\|d\|^2_{\infty}}  ( \|d\|^2_{\infty}f(0) + 2 \|d\|^2) < 2  f(0) + 2\|d\|^2.
    \end{align*}
    But this inequality follows since $0 < \theta < 1$ and $\frac{2\theta}{\|d\|^2_{\infty}} \le 1$.
    Hence, $f(t) < f(0)$,  which is the claim of~\Cref{thm:newton}-\ref{item:thm:newton:damped}.
    %which, under our assumptions, hold if and only if $f(0) + 2 \frac{\|d\|^2}{\|d\|^2_{\infty}} \le  f(0) + \|d\|^2$.
    %We conclude that $f(t) \le f(0)$ since $\|d\|^2_{\infty} \ge 2$. By~\eqref{eq:quadbound},
    %it further holds that
    %\begin{align*}
    %  f(t) &\le f(0) - \frac{2}{\|d\|^2_{\infty}} (f(0) + \|d\|^2) + \frac{1}{2}(\|d\|^2_{\infty} f(0) + 2 \|d\|^2) \frac{4}{\|d\|^4_{\infty}} \\
    %        &= f(0) - \frac{2}{\|d\|^2_{\infty}} \|d\|^2 +   \|d\|^2 \frac{4}{\|d\|^4_{\infty}},
    %\end{align*}
%  where the last inequality follows given that $(\frac{1}{\|d\|^4_{\infty}}  -    \frac{1}{\|d\|^2_{\infty}}) < 0$ since $\frac{2}{\|d\|^2_{\infty}} \le 1$.
%  Hence,
%  \[
%    f(t) \le        f(0) + 2\|d\|_{\infty}^2 (\frac{1}{\|d\|^4_{\infty}}  -    \frac{1}{\|d\|^2_{\infty}}) \le f(0) +  2(\frac{1}{2} - 1), \\
%  \]

\subsection{Divergence bounds}
Though the centered point $\hat w(\cp)$ is unknown, the Newton direction
$\newton(w, \cp)$ can provide a lower bound $h_{lb}$ of the divergence $h(w,
\hat w(\cp))$ for any $w \in \inter \coneName$ and $\mu > 0$.  Under a norm
condition, we can also obtain an upper bound $h_{ub}$  and relative-error estimates; precisely, we can obtain $h_{ub}$ and
$ \alpha \ge 1$ satisfying
\begin{align}~\label{eq:est}
  h(w, \hat w(\mu)) \ge h_{lb} \ge  \frac{1}{\alpha}  h(w, \hat w(\mu)) \qquad  
  h(w, \hat w(\mu)) \le h_{ub} \le  \alpha  h(w, \hat w(\mu)).
\end{align}
These bounds use the direct-sum decomposition
$\newton = d_1- d_2$ from \Cref{prop:newProj} induced by the subspaces
$\mathcal{L}_w := \{ Q(\sqrtminvw) u :  u \in \mathcal{L} \}$ and
$\mathcal{L}_w^{\perp} = \{ Q(\sqrtmw) u : u \in \mathcal{L}^{\perp} \}$.
\begin{thm}~\label{thm:fdinfbound}
For $\cp > 0$ and $w \in \inter \coneName$, let $d = \newton(w, \mu)$,  $d_1 =
  \proj_{\mathcal{L}^{\perp}_w} d$, and $d_2 = -\proj_{\mathcal{L}_w} d$.  The
  following statements hold:
  \begin{enumerate}[label= (\alph*)]
    \item $h(w, \hat w(\mu)) \ge h_{lb}$ for $h_{lb} := \frac{\|d\|^2}{1+\|d_1 + d_2\|_{\infty}}$. 
    \item If $\|d_1 + d_2\|_{\infty} < 1$, then $h(w, \hat w(\mu)) \le h_{ub}$ for $h_{ub} := \frac{\|d\|^2}{1-\|d_1 + d_2\|_{\infty}}$.
      Further, the relative-error estimates~\eqref{eq:est} hold for 
      $\alpha =\frac{1+ \|d_1 + d_2\|_{\infty}}{1- \|d_1 + d_2\|_{\infty}}$.
  \end{enumerate}
  \begin{proof}
    Let $a =  Q(w^{1/2}) \hat w(\mu)^{-1}$,  $z = a + a^{-1} - 2e$ and $g =a - a^{-1}$. 
    \Cref{prop:newProj} implies that
      $d_1 = \proj_{\mathcal{L}_w^{\perp}}(a^{-1} - e)$ and $d_2 = \proj_{\mathcal{L}_{w}}(a-e)$.
      From $d=d_1-d_2$, we conclude
    \begin{align*}
      \proj_{\mathcal{L}_w^{\perp}}(g+2d)   =  \proj_{\mathcal{L}_w^{\perp}}(a - a^{-1} + 2(a^{-1} -e)   )  =  \proj_{\mathcal{L}_w^{\perp}}(a + a^{-1}  -2 e) = \proj_{\mathcal{L}_w^{\perp}}z,
    \end{align*}
    and, similarly,  that $\proj_{\mathcal{L}_w}(g+2d) = - \proj_{\mathcal{L}_w} z$.
    This implies that $\langle g+2d, d\rangle = \langle z, d_1  + d_2 \rangle$. Hence,
    \[
  -\|z\|_1 \|d_1  + d_2\|_{\infty}   \le -\langle g+2d, d\rangle \le \|z\|_1 \|d_1  + d_2\|_{\infty}.
    \]
    But from \Cref{lem:Newtondesc}, we also have that $-\langle g+2d, d\rangle
    = h(w, \hat w(\mu)) - \|d\|^2$.  Hence,
    \[
      -\|z\|_1 \|d_1  + d_2\|_{\infty}  \le h(w, \hat w(\mu))  - \|d\|^2 \le  \|z\|_1 \|d_1  + d_2\|_{\infty}.
    \]
    Using the fact that $\|z\|_1 =h(w, \hat w(\mu)) $ from
    \Cref{lem:costlongt}\ref{lem:costlongt:f} and rearranging these
    inequalities gives
    \[
      h(w, \hat w(\mu))  (1+\|d_1  + d_2\|_{\infty})  \ge  \|d\|^2 \ge  h(w, \hat w(\mu))  (1-\|d_1  + d_2\|_{\infty} ).
    \]
    Dividing by $1+\|d_1  + d_2\|_{\infty}$  proves the formula and error estimate for $h_{lb}$.
    Dividing by $1-\|d_1  + d_2\|_{\infty}$ proves the same for $h_{ub}$.
  \end{proof}
\end{thm}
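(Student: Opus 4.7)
My plan is to express the divergence $h(w, \hat w(\mu))$ as a trace norm of a concrete element of $\coneName$ and then pair the Newton direction $d$ against that element. Set $a := Q(w^{1/2})\hat w(\mu)^{-1}$ and $z := a + a^{-1} - 2e$. Since the eigenvalues of $z$ have the form $\lambda + 1/\lambda - 2 \ge 0$, we have $z \in \coneName$, so \Cref{lem:costlongt}\ref{lem:costlongt:f} at $t = 0$ gives $h(w,\hat w(\mu)) = \tr z = \|z\|_1$. Writing $g := a - a^{-1}$, \Cref{lem:costlongt}\ref{lem:costlongt:df} at $t = 0$ computes $f'(0) = \langle g, d\rangle$, while \Cref{lem:Newtondesc} rewrites this as $f'(0) = -(h(w,\hat w(\mu)) + \|d\|^2)$. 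Rearranging yields the key identity $\langle g + 2d, d\rangle = \|d\|^2 - h(w, \hat w(\mu))$.

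The crux is now to rewrite the left-hand side as $\langle z, d_1+d_2\rangle$. Introducing $r_1 := a^{-1} - e$ and $r_2 := a - e$ so that $z = r_1 + r_2$ and $g = r_2 - r_1$, I first use the central-path conditions on $\hat w(\mu)$ together with the identities $Q(w^{-1/2})w = Q(w^{1/2})w^{-1} = e$ to show $r_1 \in r_p + \mathcal{L}_w$ and $r_2 \in -r_d + \mathcal{L}_w^{\perp}$, with $r_p, r_d$ as in \Cref{prop:newProj}. Uniqueness of direct-sum decompositions then forces $d_1 = \proj_{\mathcal{L}_w^{\perp}}(r_1)$ and $d_2 = \proj_{\mathcal{L}_w}(r_2)$. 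A short projection calculation gives $\proj_{\mathcal{L}_w^{\perp}}(g+2d) = \proj_{\mathcal{L}_w^{\perp}}(z)$ and $\proj_{\mathcal{L}_w}(g+2d) = -\proj_{\mathcal{L}_w}(z)$, so pairing with $d = d_1 - d_2$ and invoking orthogonality between $\mathcal{L}_w$ and $\mathcal{L}_w^{\perp}$ produces $\langle g+2d, d\rangle = \langle z, d_1+d_2\rangle$.

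Once this identity is in place, the theorem follows by H\"older's inequality: $|\langle z, d_1+d_2\rangle| \le \|z\|_1\,\|d_1+d_2\|_{\infty} = h(w,\hat w(\mu))\,\|d_1+d_2\|_{\infty}$. Combined with the key identity, this gives the two-sided bound $\bigl| \|d\|^2 - h(w,\hat w(\mu)) \bigr| \le h(w,\hat w(\mu))\,\|d_1+d_2\|_{\infty}$. Dividing by $1 + \|d_1+d_2\|_{\infty}$ yields $h(w,\hat w(\mu)) \ge h_{lb}$; when $\|d_1+d_2\|_{\infty} < 1$, dividing by $1 - \|d_1+d_2\|_{\infty}$ yields $h(w,\hat w(\mu)) \le h_{ub}$, and the relative-error constant $k = h_{ub}/h_{lb}$ follows. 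I expect the delicate step to be the reconciliation between the theorem's definition of $d_1, d_2$ (via projections of $d$) and the alternative representation via projections of $r_1, r_2$; everything else is essentially a H\"older bound after a couple of lines of linear algebra.
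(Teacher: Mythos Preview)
Your proposal is correct and follows essentially the same route as the paper: introduce $a$, $z$, and $g$, identify $d_1,d_2$ with the projections of $r_1=a^{-1}-e$ and $r_2=a-e$ via \Cref{prop:newProj}, show $\langle g+2d,d\rangle=\langle z,d_1+d_2\rangle$ by the same projection computation, and finish with H\"older together with the identity $-\langle g+2d,d\rangle=h-\|d\|^2$ from \Cref{lem:Newtondesc} and $\|z\|_1=h$ from \Cref{lem:costlongt}. The only difference is cosmetic: you spell out the reconciliation $d_i=\proj(r_i)$ a bit more explicitly than the paper, which simply cites \Cref{prop:newProj}.
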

Observe that we also obtain valid bounds by replacing $\|d_1  + d_2\|_{\infty}$ with $\|\newton(w, \mu)\|$
given that
$\|d_1 + d_2\|_{\infty} \le \|d_1 + d_2\|  = \|d_1 - d_2\| = \|\newton(w, \mu)\|$.
This in turn allows us to bound the size of Newton steps assuming bounds on divergence.
\begin{cor}~\label{cor:stepboundfromDiv}
For $\cp > 0$ and $w \in \inter \coneName$, suppose that
   $h(w, \hat w(\cp)) \le \frac{1}{2}$. Then, $\|\newton(w, \cp)\| \le 1$.
  \begin{proof}
    Replacing $\|d_1  + d_2\|_{\infty}$ with $\|\newton(w, \mu)\|$
    in the \Cref{thm:fdinfbound} lower bound yields
    \begin{align} \label{eq:div:bound}
 h(w, \hat w(\mu))\ge \frac{\| \newton(w, \mu) \|^2}{1+\| \newton(w, \mu)\|},
\end{align}
which proves the claim.
  \end{proof}
\end{cor}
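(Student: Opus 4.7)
The plan is to specialize Theorem~\ref{thm:fdinfbound}(a) by replacing the infinity-norm term $\|d_1 + d_2\|_\infty$ with the more convenient quantity $\|\newton(w,\mu)\|$, and then to solve a quadratic inequality in one scalar variable. First I would recall from \Cref{prop:newProj} that $d_1 \in \mathcal{L}_w^\perp$ and $d_2 \in \mathcal{L}_w$, which are orthogonal complements. By the Pythagorean identity in the trace inner product this gives $\|d_1 + d_2\|^2 = \|d_1\|^2 + \|d_2\|^2 = \|d_1 - d_2\|^2 = \|\newton(w,\mu)\|^2$, and since $\|\cdot\|_\infty \le \|\cdot\|$ on $\algName$, we conclude that $\|d_1 + d_2\|_\infty \le \|\newton(w,\mu)\|$.

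Plugging this bound into the lower estimate of \Cref{thm:fdinfbound}(a), and using that $x \mapsto x^2/(1+x)$ is increasing on $\mathbb{R}_+$ (so weakening the denominator weakens the whole fraction), I would obtain
\[
h(w, \hat w(\mu)) \;\ge\; \frac{\|\newton(w,\mu)\|^2}{1 + \|\newton(w,\mu)\|}.
\]
This is precisely inequality~\eqref{eq:div:bound} already appearing in the stated proof, so the analytic work from the preceding theorem does all the heavy lifting.

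The final step is elementary: setting $x := \|\newton(w,\mu)\|$ and using the hypothesis $h(w,\hat w(\mu)) \le \tfrac{1}{2}$, the displayed inequality gives $\frac{x^2}{1+x} \le \frac{1}{2}$, i.e.\ $2x^2 - x - 1 \le 0$. This factors as $(2x+1)(x-1) \le 0$, and since $2x+1 > 0$ for $x \ge 0$, we must have $x \le 1$, which is the claim.

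There is no real obstacle here: the corollary is a bookkeeping consequence of \Cref{thm:fdinfbound}(a). The only subtlety is justifying $\|d_1+d_2\|_\infty \le \|\newton(w,\mu)\|$, which relies on the orthogonality of $\mathcal{L}_w$ and $\mathcal{L}_w^\perp$ (already exploited in \Cref{prop:newProj}) together with the standard spectral-to-Frobenius norm inequality in a Euclidean Jordan algebra.
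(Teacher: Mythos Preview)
Your proof is correct and follows exactly the paper's approach: the paper also replaces $\|d_1+d_2\|_\infty$ by $\|\newton(w,\mu)\|$ in the lower bound of \Cref{thm:fdinfbound} (the justification $\|d_1+d_2\|_\infty \le \|d_1+d_2\| = \|d_1-d_2\| = \|\newton(w,\mu)\|$ is stated just before the corollary) and then reads off the conclusion from~\eqref{eq:div:bound}. Your explicit factoring $(2x+1)(x-1)\le 0$ is a welcome spelling-out of the final step that the paper leaves implicit.
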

%\begin{cor}~\label{cor:stepboundfromDiv}
%For $\cp > 0$ and $w \in \inter \coneName$, let $d = \newton(w, \cp)$.
%  If $h(w, \hat w(\cp)) \le \beta$, then
%    $\|d\| \le \frac{1}{2}(\beta+\sqrt{\beta+4}\sqrt{\beta})$.
%  Further, if $\beta \le \frac{1}{2}$ then $\|d\| \le 1$. 
%  \begin{proof}
%    Substituting $\|d_1  + d_2\|_{\infty}$ with $\|d\|$
%    into the \Cref{thm:fdinfbound} lower bound yields
%    \begin{align} \label{eq:div:bound}
% \beta \ge h(w, \hat w(\mu))\ge \frac{\| d\|^2}{1+\| d\|}.
%\end{align}
%    Solving for $\|d\|$  and noting
%    the right-hand-side is an increasing function of $\|d\|$ proves the claim.
%  \end{proof}
%\end{cor}
\noindent The inequalities of this section bear
strong resemblance to inequalities~\cite[Theorems 4.1.7--8]{nesterov2003introductory}
derived for \emph{self-concordant barrier functions}, standard
objects in IPM analysis.
We will elaborate on this connection in \Cref{sec:energyInter}.

%\begin{rem}
%  For a twice differentiable and strictly convex function $f : \algName \rightarrow \mathbb{R}$, 
%let $\| d \|_{x} := \sqrt{ \langle d, \nabla^2 f(x) d \rangle   }$ and 
%  $\hat h(x, y) := \langle \nabla f(y) - \nabla f(x) , x-y \rangle$.
%Standard analysis of interior-point methods employs an $f$ that is also \emph{self-concordant}, 
%a property that implies the following inequalities:
%  \begin{align*}
%\frac{\|x-y\|^2_{x}}{1 + \|x-y\|_{x}} \le \hat h (x,y) \le \frac{\|x-y\|^2_{x}}{1 - \|x-y\|_{x}}.
%  \end{align*}
% See, e.g.,~\cite[Theorems 4.1.7--8]{nesterov2013introductory}. These inequalities
%bear a striking similarity to Theorem~\ref{thm:fdinfbound} and
%are used in interior-point analysis to, for instance, bound the distance
% to the central path~\cite[Theorem 2.2.3]{renegar2001mathematical}.
%\end{rem}
\subsection[]{Quadratic convergence of Newton's method}
We have seen that  the Newton direction bounds the reduction in
divergence (\Cref{thm:newton}).  Divergence in turn bounds the size
of a full Newton step (\Cref{cor:stepboundfromDiv}).  Combining these
results proves quadratic convergence of the sequence $w_0, w_1, \ldots, w_m$
generated by Newton's method.
\begin{thm}~\label{thm:newtonConv}
For $\cp > 0$ and $w_0 \in \inter \coneName$, recursively define $w_i$
  via the iterations $w_{i+1} =
  Q(w_i^{1/2}) \exp(\newton(w_i, \mu))$.  If $h(w_0, \hat w(\cp)) \le \beta \le \frac{1}{2}$, then
    $h(w_i, \hat w(\cp)) \le   \beta^{2^i}$.
  \begin{proof}
    Let $h_i = h(w_i, \hat w(\cp))$ and $d_i =\newton(w_i, \mu)$.  
      Make the inductive hypothesis that $h_i \le 1/2$.
     Then $\|d_i\| \le 1$ by \Cref{cor:stepboundfromDiv}, implying
    $h_{i+1} \le \frac{1}{2} h_i \|d_i\|_{\infty}^2$ by \Cref{thm:newton}~\ref{item:thm:newton:full},
    which shows $h_{i+1} \le 1/2$. Since $h_0 \le 1/2$ by assumption,
    we conclude that both $h_i \le 1/2$  and $\|d_i\| \le 1$
    hold for all $i$. Further, for all $i$,
  \[
    h_{i+1} \le \frac{1}{2} h_i \|d_i\|_{\infty}^2  \le   \frac{1}{2} h_i \|d_i\|^2 \le  \frac{1}{2}  ( \|d_i\|+1) h^2_i,
  \]
    where the last inequality is~\eqref{eq:div:bound}.
    Since $\|d_i\| \le 1$, we have $h_{i+1} \le h^2_i$.
    Hence, $h_i \le (h_0)^{2^i} \le \beta^{2^i}$.
  \end{proof}
\end{thm}

Combining this with our previous bounds relating divergence and geodesic distance
(\Cref{lem:divdistBound}) leads to a proof of the centering lemma, 
reproduced below.
\newtonUpdate*
\begin{proof}
  By \Cref{lem:divdistBound}, we conclude that $h(w_0, \hat w(\mu)) \le \beta \le \frac{1}{2}$.
  By \Cref{thm:newtonConv}, this implies that
    $h(w_{i}, \hat w(\mu)) \le  \beta^{2^i}$,
  which, since ${\delta(w_{i}, \hat w(\mu))}^2 \le h(w_{i}, \hat w(\mu))$, proves the claim.
\end{proof}

\subsection{Energy interpretation and self-scaled barriers}~\label{sec:energyInter}
We conclude this section by highlighting connections with the literature.  This is strictly not needed for our analysis,
but helps put our work into context. 
The main object of study is the \emph{energy functional} $E(\gamma)$,
defined on smooth curves $\gamma : [0, 1] \rightarrow \inter\coneName$
via
\[
E(\gamma(t)) := \int^1_0 \|\gamma'(t)\|^2_{\gamma(t)} dt,
\]
where $\|v\|^2_{u}  = \langle v, Q(u)^{-1} v \rangle$.
In other words, energy is defined
by replacing $\|\gamma'(t)\|_{\gamma(t)}$
with $\|\gamma'(t)\|^2_{\gamma(t)}$
in the arc-length integral (\Cref{sec:geo}).

We next show that divergence $h(u, v)$ is precisely the energy of the
line-segment connecting $u$ and $v$. 
This immediately implies the \Cref{lem:divdistBound}  inequality
$\delta(u, v)^2 \le h(u, v)$ 
given that $\delta(u, v)^2 \le E(\gamma)$ holds
for any curve $\gamma(t)$ connecting $u$ and $v$; see~\cite[Chapter 9, Lemma 2.3]{do1992riemannian}.

\begin{prop}
For $u, v \in \inter\coneName$,  let $\ell(t):= u + t (v-u)$. Then
 $h(u, v) = E(\ell(t))$, i.e.,
\begin{align}~\label{eq:energyIntegral}
h(u, v) = \int^1_0 \langle v- u, Q(\ell(t))^{-1} (v-u) \rangle dt.
\end{align}
\begin{proof}
By definition, $h(u, v) = \langle u, v^{-1}\rangle + \langle u^{-1}, v \rangle - 2n$.
Rearranging shows $h(u, v) = \langle v - u, u^{-1} - v^{-1}  \rangle$.
Since $-Q(z)^{-1}$ is the Jacobian of the inverse map $z \mapsto z^{-1}$~\cite[Proposition II.3.3]{faraut1994analysis}, we can also write
\[
v^{-1} -  u^{-1}  = \int^1_0 -Q(\ell(t))^{-1} (v-u) dt.
\]
Hence, 
$h(u, v) = \langle v - u, u^{-1} - v^{-1}  \rangle = \int^1_0 \langle v- u, Q(\ell(t))^{-1}(v-u) \rangle dt$,
as claimed.
\end{proof}
\end{prop}

%\begin{lem}
%\[
%(\sup \{ \alpha : x - \alpha p \in \coneName\})^{-1} = \inf \{ \beta >= 0 : \beta x - p \}
%\]
%\begin{proof}
%Suppose both are attained. Then since $x \in \inter \coneName$,
%we must have that $\alpha > 0$. This shows that $\frac{1}{\alpha} x - p \in \coneName$,
%implying that
%\[
%\frac{1}{\alpha_*} \ge \beta_* 
%\]
%On the other hand
%\[
% x - \frac{1}{\beta_*} p \in \coneName 
%\]
%implying that
%\[
%\frac{1}{\beta_*} \le \alpha_*
%\]
%So
%\[
%\beta_* \ge \frac{1}{\alpha_*}
%\]
%we conclude $\beta_* = \frac{1}{\alpha_*}$.
%Suppose $\beta_* = 0$. Then $-p \in \coneName$, which shows that
%$\alpha_* = \infty$. On the other hand, if $\alpha_* = \infty$, 
%we can construct a sequence of $\beta \ge 0$ converging to zero,
%showing that $\beta_* = 0$.
%\end{proof}
%\end{lem}

In view of this result, we can bound $h(u, v)$ by bounding
$Q(\ell(t))^{-1}$.  For this, we use  standard Hessian bounds 
for \emph{self-scaled
barrier functions}~\cite{nesterov1998primal}, which
generalize $f(u):=\log \det u^{-1}$ and are central in IPM analysis over \emph{self-scaled cones}.
Specifically, we interpret $Q(u)^{-1}$ as the Hessian of
$f(u)$ and invoke~\cite[Theorem
4.1]{nesterovTodd1997self}; see~\cite{nesterovTodd1997self, hauser2002self} for the 
definition of self-scaled barriers and proof that $f(u)$ is self-scaled. 
\begin{prop}\label{prop:selfScaledBounds}
For $u, v \in \inter \coneName$, let $\Delta = Q(u)^{-1/2}(u-v)$.
If $\|\Delta\|_{\infty} < 1$, then
\begin{align}\label{eq:boundsFromSelfScaled}
 \frac{\|\Delta\|^2}{1 +\|\Delta\|_{\infty} }  \le     h(u, v) \le  \frac{\|\Delta\|^2}{1 -\|\Delta\|_{\infty} }.
\end{align}
\begin{proof}
Let $H(z) := Q(z)^{-1}$ 
and $\sigma_{z}(p) := \inf\{ \beta \ge 0 : \beta z - p \in \coneName\}$.  
Then~\cite[Theorem 4.1]{nesterovTodd1997self} states 
\begin{align*}
 \frac{1}{(1 + t \sigma_{u}(-p) )^2}  H(u) \preceq   H(u-tp) \preceq \frac{1}{(1 -t \sigma_{u}(p) )^2}  H(u)
\end{align*}
for all $t \in [0, 1/\sigma_{u}(p))$, where we take $1/\sigma_{u}(p) = +\infty$ if $\sigma_{u}(p) = 0$.   Taking $p = u-v$ and observing % chktex 9
\[
\sigma_{u}(p) \le  |\lambda_{\max}(Q(u)^{-1/2} p)| \le \|\Delta\|_{\infty}, \qquad \sigma_{u}(-p) \le |\lambda_{\min}( Q(u)^{-1/2} p )| \le \|\Delta\|_{\infty},
\]
gives, for all $t \in [0, 1/\|\Delta\|_{\infty})$, the bounds % chktex 9
%we conclude $\sigma_{u}(-p) \le \|\Delta\|_{\infty}$ and $\sigma_{u}(p)\le \|\Delta\|_{\infty}$.  Hence, 
\begin{align*}
 \frac{1}{(1 + t \|\Delta\|_{\infty} )^2}  H(u) \preceq   H(u+t(v-u)) \preceq \frac{1}{(1 -t \|\Delta\|_{\infty} )^2}  H(u).
\end{align*}
When  $\|\Delta\|_{\infty} < 1$, we can substitute each bound into the energy integral~\eqref{eq:energyIntegral} and apply the identities
\[
\int^1_0 \frac{dt}{(1 +t \|\Delta\|_{\infty}  )^2} = \frac{1}{1+\|\Delta\|_{\infty}}, \qquad
\int^1_0 \frac{dt}{(1 -t \|\Delta\|_{\infty} )^2} = \frac{1}{1-\|\Delta\|_{\infty}},
\]
to conclude that
\[
 \frac{ \langle H(u) p, p \rangle }{1 +\|\Delta\|_{\infty}}  \le   h(u, v) \le \frac{ \langle H(u) p, p \rangle  }{1 -\|\Delta\|_{\infty}}.
\]
Since $\|\Delta\|^2 = \langle H(u) p, p \rangle$, the claim follows.
\end{proof}
\end{prop}
When applied to $h(w, \hat w(\mu))$, the bounds~\eqref{eq:boundsFromSelfScaled} are similar to those from \Cref{thm:fdinfbound},
but not equivalent.  In particular,~\eqref{eq:boundsFromSelfScaled} requires the unknown quantity 
$\hat w(\mu)$ to construct $\Delta$,
whereas \Cref{thm:fdinfbound} uses  the Newton direction $\newton(w, \mu)$.
%the direct-summands $d_1$ and $d_2$
Further,~\eqref{eq:boundsFromSelfScaled} 
does not preserve the symmetry $h(w, \hat w) = h( w^{-1},\hat w^{-1})$, 
as replacing $\Delta = Q(w)^{-1/2}(w-\hat w)$ with $\Delta = Q(w^{-1})^{-1/2}(w^{-1} - \hat w^{-1})$
leads to  different bounds.
%JIn fact, we can interpret \Cref{thm:fdinfbound} as an "average"
%Jof these bounds in the sense that the vectors $d_1$ and $d_2$
%Jfrom this theorem satisfy
%J\[
%Jd_1 =  \proj_{\mathcal{L}^\perp} Q(w^{1/2})( \hat w^{-1} -   w^{-1}), \qquad
%Jd_2 = \proj_{\mathcal{L}_w} Q(w)^{-1/2}(\hat w - w).
%J\]

We also note that~\eqref{eq:boundsFromSelfScaled} still holds if  $\|\Delta\|_{\infty}$ is replaced
with $\|\Delta\|$. With this replacement, 
it is a special case of~\cite[Theorem 4.1.7--8]{nesterov2003introductory},
which holds for arbitrary \emph{self-concordant} functions, a 
superset of self-scaled functions that are central in IPM analysis
over general convex sets.

\section{Long-step algorithm}\label{sec:prac}
\begin{figure}~\label{fig:longstep}
  \qquad
  \begin{minipage}[t]{.5\linewidth}
  \begin{algorithm}[H]
  \SetAlgoLined\DontPrintSemicolon{}
  \SetKwFunction{algo}{longstep}\SetKwFunction{proc}{center}
  \SetKwProg{myalg}{Algorithm}{}{}
  \myalg{\algo{$w_0, \mu_0, \mu_f$, $\epsilon$}}{%chktex  
  \vspace{.1cm}
  $\mu \leftarrow \mu_0$, $w \leftarrow w_0$ \\
  \While{$\mu  > \mu_f$} 
  {%chktex
    $w \leftarrow \proc(w, \cp, \alpha)$ \\
    $\cp \leftarrow \inf \{ \cp > 0 :  h_{ub}(w, \cp) \le \beta \}$
  }
    \Return{$\proc(w, \cp, \epsilon)$}  \\
  }{} 
  \end{algorithm} 
  \end{minipage}
  \begin{minipage}[t]{.5\linewidth}
    \begin{procedure}[H]
  \SetKwProg{myproc}{Procedure}{}{}
  \myproc{\proc{$w_0, \mu, \epsilon$}}{%chktex
  \vspace{.1cm}
  $w \leftarrow w_0$ \\
  \While{$h_{ub}(w, \mu)  > \epsilon$}{%chktex
    $d \leftarrow  \newton(w,  \mu)$ \\
    $\gamma \leftarrow \max\{1, \frac{1}{2\theta} \|d\|^2_{\infty}\}$ \\
    $w \leftarrow  Q(w^{1/2}) \exp(\frac{1}{\gamma} d)$ \\
  }
  \Return{} $w$}
\end{procedure}
  \end{minipage}
  \caption{A long-step algorithm (left) and  centering
  procedure (right). The parameters $\beta$ and $\alpha$ control distance to the
  central path and  $\theta$ the transition to damped Newton steps. 
  The algorithms globally convergence on all inputs if $1 > \theta > 0$ 
  and $\beta > \alpha > 0$. }~\label{fig:long}
\end{figure}

When proving the convergence of {\tt shortstep} (\Cref{alg:barrier}), we
established results that suggest an alternative algorithm.  This alternative uses our divergence upper-bound
(\Cref{thm:fdinfbound}) to loosely track the central path and 
damped Newton steps to ensure that divergence strictly decreases (\Cref{thm:newton}).
We state this algorithm in~\Cref{fig:long} using
the following notation for the divergence upper-bound:
\[
  h_{ub}(w, \mu) = \begin{cases} \frac{\|\newton(w, \mu)\|^2}{1-\|d_1(w, \mu) + d_2(w, \mu)\|_{\infty}} & \|d_1 + d_2\|_{\infty} < 1  \\
                                  \infty & otherwise,
  \end{cases} 
\]
 (Here  $d_1(w, \mu)$ and $d_2(w, \mu)$  denote the direct-summands of the Newton direction
 $\newton(w, \mu)$; see \Cref{prop:newProj}.)
We name this algorithm {\tt longstep} 
in reference to classical long-step IPMs~\cite{wright1997primal},
which also loosely track the central path.

The next theorem shows that {\tt longstep} is  \emph{globally
convergent}, i.e.,  it can be initialized arbitrarily. 
To prove this, we exploit the fact that the sublevel
sets of divergence $h$ are compact, which implies positive lower bounds on certain
progress measures.  This theorem also shows scale invariance (Section~\ref{sub:affineinvar}).
This follows from~\Cref{prop:affine} given that the step-size $\gamma^{-1}$ and 
divergence bound $h_{ub}$ depend only on the eigenvalues
of $d_1 + d_2$ and $d_1 - d_2$.
\begin{thm}
  If $1 > \theta > 0$ and $\beta > \alpha > 0$, then
  the algorithm {\tt longstep} and its subroutine {\tt center}~(Figure~\ref{fig:long}) have the following properties.
  \begin{enumerate}[label= (\alph*)]
    \item\label{thm:long:conv} For all inputs $w_0 \in \inter \coneName$ and
      $(\mu_0, \mu_f, \epsilon) > 0$, {\tt longstep}  terminates and returns
      $w$ satisfying $h(w, \hat w(\mu)) \le \epsilon$ for $\mu \le \mu_f$.  Further, it
      monotonically decreases $\mu$.
    \item\label{thm:long:cent} For all inputs $w_0 \in \inter \coneName$ and
      $(\mu, \epsilon) > 0$, {\tt center}  terminates and returns $w$ satisfying
      $h(w, \hat w(\mu)) \le \epsilon$.  Further, it monotonically decreases
      $h(w, \hat w(\mu))$.
    \item\label{thm:long:aff} Both {\tt center} and {\tt longstep} are scale invariant.
  \end{enumerate}
  \begin{proof}
    To prove statements~\ref{thm:long:conv}-\ref{thm:long:cent}, we first show compactness
    of the set
\[
   S(\zeta) := \{ (w, \mu) : h(w, \hat w(\mu)) \le \zeta, \mu_f \le \mu \le \mu_0  \}.
\]
     It is closed because $(w, \mu) \mapsto h(w, \hat w(\mu))$ is
    continuous. To see it is bounded, note that the eigenvalues of $\hat w(\mu)$ and
    $\hat w(\mu)^{-1}$ are bounded below by some $c > 0$ on $\mu_f \le \mu \le
    \mu_0$, implying that 
\[
    \zeta  \ge  h(w, \hat w(\mu)) \ge c \langle e,  w +   w^{-1}\rangle  - 2n \ge c \|w\|_1 -2n
    \]
    when $(w, \mu) \in S(\zeta)$.  Hence, if $(w, \mu) \in S(\zeta)$
    then $\|w\|_1 + |\mu|$ is bounded,
    implying $S(\zeta)$ is compact.

    To prove~\ref{thm:long:cent},  let $\zeta = h(w_0, \hat w(\mu))$.
    Let $\Delta(w, \mu)$ denote the decrease in $h$ after one Newton step from $w$,
    i.e., 
    \[
    \Delta(w, \mu)  =  h(\hat w(\mu), w) -  h(\hat w(\mu), w')
    \]
     where   $w' =  Q(w^{1/2}) \exp(\frac{1}{\gamma} d)$.
    After $N$ steps, $h(w, \hat w(\mu)) \le h(w_0, \hat w(\mu)) - \Delta_* N$,
    where
    \[ 
    \Delta_* := \inf_{w, \mu} \{ \Delta(w, \mu) :  h_{ub}(\hat w(\mu), w) \ge
    \epsilon, (w, \mu) \in S(\zeta)  \}.
    \]
    Compactness of $S(\zeta)$ implies $\Delta_*$ is attained, which implies $\Delta_*  > 0$  by our step-size rule and \Cref{thm:newton}.
    Since  $h \ge 0$, we conclude that {\tt center} must terminate before
    $\Delta_* N > h(w_0, \hat w(\mu))$.

    To prove statement~\ref{thm:long:conv}, note that $\mu \le k_*^{-M} \mu_0$
    after $M$ iterations, where
    \[
    k_* := \inf_{w, \mu , k}\{ k \ge 1: (w, \mu) \in  S(\alpha),  h_{ub}(w,  \mu) \le \alpha,  h_{ub}(w, (1/k) \mu) = \beta \}.
    \]
    Compactness of $S(\alpha)$ implies $k_*$ is attained, which implies that
    $k_* > 1$ since $\beta > \alpha$.  This implies $\mu < \mu_f$ eventually holds,
    implying termination of {\tt longstep}.

    Finally, statement~\ref{thm:long:aff}  follows from~\Cref{prop:affine} and
    \Cref{lem:mapInvar}, given that $\gamma$ and $h_{ub}$, viewed as  functions of
    $d_1$ and $d_2$, are invariant under transformation by an orthogonal
    automorphism $M$, i.e., $\gamma(d_1, d_2) = \gamma(M d_1, Md_2)$
    and $h_{ub}(d_1, d_2) = h_{ub}(M d_1, Md_2)$.
  \end{proof}
\end{thm}

We close this section with practical matters related to implementation.
Specifically, we show how to efficiently evaluate the divergence bound $h_{ub}(w, \mu)$ for
fixed $w$,   how to find the Newton direction using a least-squares technique,
how to evaluate geodesic updates without computation of $w^{1/2}$,
 and how to construct feasible points for the primal-dual pair~\eqref{sdp:main}.

\subsection{Evaluating divergence for $\mu$-selection}~\label{sec:muselect}
For fixed $w$, the divergence bound $h_{w, ub}(\mu) := h_{ub}(w, \mu)$
has a simple formula that admits efficient selection of $\mu$
at each iteration of {\tt longstep}. To evaluate the formula,
we only need to know $\mu$ and quantities involving the vector
\[
  g_w:= \proj_{\mathcal{L}^{\perp}_w}{Q(\sqrtminvw)}x_0  + \proj_{\mathcal{L}_w}{Q(\sqrtmw)}s_0,
\]
where we recall that $\mathcal{L}_w := \{ Q(\sqrtminvw) u :  u \in \mathcal{L}
\}$ and  $\mathcal{L}_w^{\perp} = \{ Q(\sqrtmw) u : u \in \mathcal{L}^{\perp}
\}$.

\begin{prop}
For $w \in \inter \coneName$, let $g_w$ have minimum and maximum eigenvalues $\lambda_{\min}$ and $\lambda_{\max}$. 
  Let $k(\mu)  = \min( \frac{1}{\sqrt \mu}\lambda_{\min},   2-\frac{1}{\sqrt \mu}\lambda_{\max})$.
  Then, for all $\mu > 0$,
  \[
  h_{w, ub}(\mu) = \begin{cases}  \frac{\frac{1}{\mu} \|g_w\|^2 - 2 \frac{1}{\sqrt{\mu}} \tr g_w + n }{k(\mu)}  & k(\mu) > 0 \\
                                  \infty & \mbox{otherwise}.
  \end{cases}
  \]
  \begin{proof}
    Let $d=\newton$ and let $d_1$ and $d_2$ be as in Proposition~\ref{prop:newProj}. Suppose 
    that $h_{ub}(w, \mu)$ is finite, i.e., $1-\|d_1 + d_2\|_{\infty} > 0$.
    Then we have that
    \[
      h_{ub} = \frac{\|d\|^2}{1-\|d_1 + d_2\|_{\infty}}, \qquad d_1 + d_2 =  \frac{1}{\sqrt{\mu}} g_w - e.
  \]
     Hence, $\|d_1 + d_2\|_{\infty}$ is the max of  $1-\frac{1}{\sqrt \mu}\lambda_{\min}$
     and $\frac{1}{\sqrt \mu}\lambda_{\max} - 1$.
     The claimed denominator $k(\mu)$ follows using the identity
      \[
        1 - \max(1-a, b-1) = 1 + \min(a-1, 1-b) =  \min(a, 2-b).
      \]
      The identity for $\|d\|^2$ follows by  expanding
      $\|\frac{1}{\sqrt \mu}g_w - e\|^2$ and observing that $\|d\| = \|d_1 + d_2\|$.
  \end{proof}
\end{prop}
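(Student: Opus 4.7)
The plan is to directly compute $h_{ub}(w, \mu) = \|d\|^2/(1-\|d_1+d_2\|_\infty)$ in terms of $g_w$ by first simplifying $d_1 + d_2$ and then exploiting orthogonality $d_1 \perp d_2$ to get $\|d\|^2 = \|d_1 + d_2\|^2$.

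First I would unpack $d_1, d_2$ using \Cref{prop:newProj}, which gives
\[
  d_1 = \proj_{\mathcal{L}^{\perp}_w}\!\bigl(Q(w^{-1/2})(\tfrac{1}{\sqrt\mu}x_0 - w)\bigr), \qquad d_2 = \proj_{\mathcal{L}_w}\!\bigl(Q(w^{1/2})(\tfrac{1}{\sqrt\mu}s_0 - w^{-1})\bigr).
\]
Using $Q(w^{-1/2}) = Q(w^{1/2})^{-1}$ (from $Q(w^{-1}) = Q(w)^{-1}$ in \Cref{lem:quad}) together with $Q(w^{1/2})e = w$ and $Q(w^{-1/2})e = w^{-1}$, we get $Q(w^{-1/2}) w = e$ and $Q(w^{1/2}) w^{-1} = e$. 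Since $\mathcal{L}_w$ and $\mathcal{L}^{\perp}_w$ are orthogonal complements in $\algName$, the identity $e = \proj_{\mathcal{L}_w} e + \proj_{\mathcal{L}^{\perp}_w} e$ then yields the clean formula
\[
  d_1 + d_2 = \tfrac{1}{\sqrt\mu}\, g_w - e.
\]

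Next I would compute the two pieces of $h_{ub}$. For the denominator, the eigenvalues of $d_1 + d_2$ are $\frac{1}{\sqrt\mu}\lambda_i - 1$ for each eigenvalue $\lambda_i$ of $g_w$, so
\[
  \|d_1 + d_2\|_\infty = \max\!\bigl(\tfrac{1}{\sqrt\mu}\lambda_{\max} - 1,\; 1 - \tfrac{1}{\sqrt\mu}\lambda_{\min}\bigr).
\]
Applying the identity $1 - \max(a-1, 1-b) = \min(b,\, 2-a)$ (as noted in the statement) gives $1 - \|d_1 + d_2\|_\infty = k(\mu)$, matching the claimed denominator and handling the finite-versus-$\infty$ split automatically via the sign of $k(\mu)$. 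For the numerator, observe that $d_1 \perp d_2$ since $d_1 \in \mathcal{L}^{\perp}_w$ and $d_2 \in \mathcal{L}_w$, so $\|d\|^2 = \|d_1 - d_2\|^2 = \|d_1\|^2 + \|d_2\|^2 = \|d_1 + d_2\|^2$. Expanding,
\[
  \|d\|^2 = \bigl\|\tfrac{1}{\sqrt\mu} g_w - e\bigr\|^2 = \tfrac{1}{\mu}\|g_w\|^2 - \tfrac{2}{\sqrt\mu}\langle g_w, e\rangle + \|e\|^2 = \tfrac{1}{\mu}\|g_w\|^2 - \tfrac{2}{\sqrt\mu}\tr g_w + n,
\]
using $\langle g_w, e\rangle = \tr g_w$ and $\|e\|^2 = \tr e = n$.

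The main obstacle, such as it is, is the algebraic reduction $d_1 + d_2 = \frac{1}{\sqrt\mu}g_w - e$: one must be careful to use the correct $Q$-identities and recognize that the stray $e$ terms inside each projection combine via orthogonality of $\mathcal{L}_w$ and $\mathcal{L}^{\perp}_w$ to yield a single $-e$ outside. Everything after that is purely routine spectral and norm bookkeeping on a single element $\frac{1}{\sqrt\mu} g_w - e$.
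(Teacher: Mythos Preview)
Your proposal is correct and follows essentially the same approach as the paper's proof: derive $d_1 + d_2 = \tfrac{1}{\sqrt\mu} g_w - e$, then read off the denominator from the eigenvalues and the numerator from $\|d\| = \|d_1 + d_2\|$. The paper simply asserts the identity $d_1 + d_2 = \tfrac{1}{\sqrt\mu} g_w - e$, whereas you spell out its derivation via \Cref{prop:newProj} and the $Q$-identities, but the overall argument is the same.
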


%Related feasibility results for the Nesterov-Todd direction are given 
%in~\cite[Lemma~4.2]{roos2006full}. Note this paper also has similar looking quad-convergence bounds.

\subsection{Newton direction via least squares}\label{sec:leastsquares}
Interior-point methods typically  find  search directions by solving
least-squares problem of the form
\[
  \mbox{minimize}_y \;\; \frac{1}{2} y^T  A^* W(x, s) A y  - f^{T}y \mbox{ subject to } By = g,
\]
where $W(x, s)$ is a positive-definite weighting matrix induced by the current iterate $(x, s)$
and  $(A, B, f, g)$ are parameters induced by the affine constraints
$x_0 + \mathcal{L}$ and $s_0 + \mathcal{L}^{\perp}$.
Equivalently, they solve linear systems of the form 
\begin{align*}
 \begin{bmatrix} 
  A^*W(x, s) A & B^* \\
  B & 0 \\
  \end{bmatrix}
  \begin{bmatrix}
    y \\ z
  \end{bmatrix}
  =
  \begin{bmatrix}
    f \\ g
  \end{bmatrix}
\end{align*}
for which specialized algorithms exist (e.g.,~\cite{lawson1995solving}).
Such a system can also yield the Newton direction $\newton(w, \mu)$.
This, of course, is not surprising given its construction 
via orthogonal projection~(\Cref{prop:newProj}).
Nevertheless, we give this system explicitly  for affine constraints  of the 
form:
\begin{align}~\label{eq:affine}
  s_0 + \mathcal{L}^{\perp} =  \{ c - Ay :   By = g, \;\; y \in \mathbb{R}^m \}, \qquad
  x_0 + \mathcal{L} =  \{ x \in \algName : \exists z \in \mathbb{R}^d \;\; A^*x + B^*z = b \},
\end{align}
where $(y, z) \in \mathbb{R}^{m} \times \mathbb{R}^{d}$ denote
additional variables, $A : \mathbb{R}^m \rightarrow \algName$ and $B : \mathbb{R}^m \rightarrow \mathbb{R}^d$
are linear maps with adjoint operators $A^* : \algName \rightarrow \mathbb{R}^m$
and $B^* : \mathbb{R}^{d} \rightarrow \mathbb{R}^m$, and $(b, g, c) \in \mathbb{R}^{m}  \times\mathbb{R}^{d}\times \algName$
are fixed parameters.

In this notation, the Newton direction becomes the $d$ that
for some $(y, z)$ solves 
  \begin{align} \label{eq:elim}
  A^* ( Q(w^{1/2})(e+d)   )=   \frac{1}{\sqrt{\mu}}b - B^*z, \;\;\;
    Q(w^{-1/2})(e-d)   =      \frac{1}{\sqrt{\mu}}c - Ay,   \;\;\; By   = \frac{1}{\sqrt \mu}g.
\end{align}
Eliminating $d$ and using $Q(w) = Q(\sqrtmw) Q(\sqrtminvw)^{-1}$
yields a system with the desired form.  Note by modifying the right-hand-side of this
system, we can also construct the direct-summands $d_1$ and $d_2$ of \Cref{prop:newProj}.
\begin{prop}~\label{thm:schur}
  For $w \in \inter \coneName$ and $\mu >0$,
  let  $(y, z) \in \mathbb{R}^m \times \mathbb{R}^d$ solve the least-squares system
    \[
\begin{bmatrix}
  \label{eq:schur}
  A^* Q(w) A & B^* \\
  B & 0 \\
\end{bmatrix} \begin{bmatrix}
                y \\
                z
               \end{bmatrix} =
\begin{bmatrix}
  \frac{1}{\sqrt{\mu}}(b + A^* Q(w) c)  -2A^* w       \\
  \frac{1}{\sqrt{\mu}}g
 \end{bmatrix}.
 \]
  Then,  the Newton direction  satisfies
  $\newton(w, \mu) = e - Q(\sqrtmw)  (\frac{1}{\sqrt{\mu}}c - Ay)$.
\begin{proof}
  From the second equation of~\eqref{eq:elim}, we conclude that
  $d  =  e - Q(\sqrtmw)  ( \frac{1}{\sqrt{\mu}} c - Ay)$.
  Substituting into the first equation yields
  \begin{align*}
    \frac{1}{\sqrt{\mu}} b  - B^*z &=  A^* Q(\sqrtmw) \bigg( 2e - Q(\sqrtmw)  (  \frac{1}{\sqrt{\mu}} c - Ay) \bigg) \\
                                        &=  2A^* w -   \frac{1}{\sqrt{\mu}} A^* Q(w) c + A^* Q(w) Ay.
  \end{align*}
  Rearranging terms proves the claim.
\end{proof}
\end{prop}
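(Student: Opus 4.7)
The plan is to translate the defining conditions of the Newton direction (Definition~\ref{defn:NewtonDir}) directly into the $(A, B, b, c, g)$ notation given right before the statement, and then eliminate $d$ algebraically until the claimed block system emerges. I would first substitute the parameterizations of $s_0 + \mathcal{L}^{\perp}$ and $x_0 + \mathcal{L}$ into the two conditions of Definition~\ref{defn:NewtonDir}. The dual membership produces $Q(w^{-1/2})(e - d) = \frac{1}{\sqrt\mu} c - Ay$ and $By = \frac{1}{\sqrt\mu}g$; the primal membership produces $A^* Q(w^{1/2})(e + d) + B^* z = \frac{1}{\sqrt\mu} b$. These are precisely equations~\eqref{eq:elim}.

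The second step is to isolate $d$ from the dual equation. Applying $Q(w^{1/2})$ to both sides and using $Q(w^{1/2}) Q(w^{-1/2}) = I$ (\Cref{lem:quad}) immediately gives
\[
d \;=\; e - Q(w^{1/2})\!\left(\tfrac{1}{\sqrt\mu} c - Ay\right),
\]
which is already the formula claimed in the conclusion. So the remaining work is showing that the $y$ appearing here, together with some $z$, must satisfy the stated block system.

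The third step is substitution into the primal equation. Plugging the expression for $d$ into $A^*Q(w^{1/2})(e+d) + B^*z = \frac{1}{\sqrt\mu}b$ and using the identities $Q(w^{1/2})^2 = Q(w)$ and $Q(w^{1/2}) e = w$ from \Cref{lem:quad}, the left-hand side becomes $2A^*w - \frac{1}{\sqrt\mu} A^*Q(w) c + A^*Q(w) A y + B^* z$. Rearranging to match the target right-hand side produces the top block $A^*Q(w)Ay + B^*z = \frac{1}{\sqrt\mu}(b + A^*Q(w)c) - 2A^*w$, while $By = \frac{1}{\sqrt\mu} g$ is the bottom block. Uniqueness of $\newton(w,\mu)$ (\Cref{prop:newProj}) ensures that any solution $(y,z)$ of the block system recovers the same $d$, closing the argument.

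The main obstacle is almost entirely bookkeeping: keeping track of the factors of $\frac{1}{\sqrt\mu}$ and the sign flip from $(e-d)$ vs.\ $(e+d)$, and invoking $Q(w^{1/2})^2 = Q(w)$ at the right moment. There is no conceptual difficulty, because the quadratic-representation identities and the existence/uniqueness of $d$ are already in hand; the proposition is really a change-of-variables statement packaging the Newton system in a form compatible with standard least-squares solvers.
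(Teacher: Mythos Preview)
Your proposal is correct and follows essentially the same route as the paper: derive~\eqref{eq:elim} from Definition~\ref{defn:NewtonDir}, solve the dual equation for $d$, substitute into the primal equation using $Q(w^{1/2})^2=Q(w)$ and $Q(w^{1/2})e=w$, and rearrange. Your explicit appeal to uniqueness (\Cref{prop:newProj}) to close the implication in the stated direction is a small but welcome clarification that the paper leaves implicit.
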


%\begin{rem}
%In passing, we proved that $\newton(w, \mu) = e - Q(\sqrtmw)  (\frac{1}{\sqrt{\mu}}c - Ay)$.
%Letting $x = \sqrt\mu w$ and $s = c - \sqrt\mu Ay$ allows
%  us to interpret $\|\newton \|$ in terms of  $\|\mu e - Q(x^{1/2}) s\|$. Precisely,
%  \[
%    \mu \newton = \mu e - \sqrt\mu Q(\sqrtmw)  (c - A \sqrt\mu y) = \mu e -  Q(x^{1/2})  s.
%  \]
%  Given search directions $(\Delta x, \Delta s)$, 
%  some primal-dual IPMs use the eigenvalues of $\mu e - Q((x  + t \Delta x) ^{1/2}) s(x + t\Delta s)$ 
%  to compute a step-size $t$; see, e.g.,~\cite[Section 4]{sturm2002implementation}.
%  This can be compared with our use of the spectral norm  $\|d_{N}\|_{\infty}$.
%\end{rem}

\subsection{Evaluation of geodesic updates}
For $w \in \inter \coneName$ and $v \in \algName$, let $g(w, v) := Q(w^{1/2}) \exp( Q(w^{1/2}) v)$.
By Proposition~\ref{thm:schur}, we see that 
$\newton(w, \mu) = e + Q(w^{1/2})v$ for a particular point $v\in \algName$.
Hence, the geodesic update $Q(w^{1/2}) \exp (\newton)$
satisfies, for particular  $\kappa > 0$,  the equation
\begin{align}~\label{eq:geoform}
  Q(w^{1/2}) \exp (\newton) =  \frac{1}{\kappa} g(w, v).
\end{align}
We next show that $g(w, v)$ can be computed without constructing the square
root $w^{1/2}$. Letting $z = Q(w^{1/2}) v$, the key idea is expressing the power series
of $\exp(z)$ in terms of $Q(z)$ and applying the identity $Q(z) = Q(w^{1/2}) Q(v) Q(w^{1/2})$
from \Cref{sec:appendix}.

\begin{prop}~\label{prop:geopower}
  If $w \in \coneName$ and $v \in \algName$, then $g(w, v) =
  \sum^{\infty}_{n=0} \frac{1}{(2n)!}( Q(w) Q(v))^{n}  (w +   \frac{1}{2n+1}
  Q(w)v)$.
  \begin{proof}

For arbitrary $z$, we have that $z^{2n} = Q(z)^{n} e$ and $z^{2n+1} = Q(z)^{n} z$, which implies that
    \begin{align}\label{eq:geoform1}
      Q(w^{1/2})   \exp(z) = Q(w^{1/2}) \sum^{\infty}_{i=0} \frac{1}{(2n)!} Q(z)^{n} (e +   \frac{1}{2n+1}  z).
    \end{align}
    For $z  = Q(w^{1/2}) v$, we have, using $Q(z)^{n} = (Q(w^{1/2})Q(v)Q(w^{1/2}))^{n}$, that
    \begin{align}\label{eq:geoform2}
      Q(w^{1/2}) Q(z)^{n} = (Q(w) Q(v))^{n}  Q(w^{1/2}).
    \end{align}
    Substituting~\eqref{eq:geoform2} into~\eqref{eq:geoform1} proves the claim.
%
%    Q(w^{1/2})Q(d)Q(w) Q(d)Q(w) Q(d)Q(w^{1/2})
%
%    (Q(w) Q(d))^n  Q(w^{1/2}) (e +   Q(w^{1/2}) d) 
%    (Q(w) Q(d))^n  (w +   Q(w) d)
%
  \end{proof}
\end{prop}

An alternative formula for $g(w, v)$  is available when $\algName$ is \emph{special}, i.e.,
if the product operation $x \circ y$ satisfies  $x\circ y = \frac{1}{2}(xy + yx)$
for an associative  product $xy$. 
An example of a special algebra is the set of symmetric
matrices with product $\frac{1}{2}(XY+YX)$, where $XY$ denotes ordinary matrix multiplication.
For any special algebra,
the quadratic representation satisfies $Q(x)y = x y x$ for all $x, y \in \algName$.
This fact allows us to compute  $g(w, v)$ by evaluating
  $\exp (wv) := \sum^{\infty}_{d=0} \frac{1}{d!} (wv)^d$, i.e., the
  exponential map induced by the associative product, at the point $wv$.
\begin{prop}~\label{prop:geospecial}
  If $\algName$ is special then $g(w, v)=  \exp (w v) w$
  for  all $v \in \algName$ and $w \in \inter \coneName$.
  \begin{proof}
    Let $z = Q(w^{1/2}) v$.  Since $z = w^{1/2} v  w^{1/2}$, we have that
    \[
      w^{1/2} z^d w^{1/2} = w^{1/2} w^{1/2} v (w v)^{d-1}  w^{1/2} w^{1/2} =  (wv)^d w
    \]
    Hence, 
    \[
    Q(w^{1/2})  \exp(Q(w^{1/2}) v) = w^{1/2}  \left( \sum^{\infty}_{d=0}  \frac{1}{d!} z^d \right) w^{1/2}=   \left(\sum^{\infty}_{d=0}\frac{1}{d!} (w v)^d \right) w = \exp (w v) w.
    \]
  \end{proof}
\end{prop}

Note that for symmetric matrices,  $\exp(WV)$ is the usual
matrix exponential evaluated at the matrix product $WV$. One can evaluate the
matrix exponential using a power series or Pade
approximation~\cite{moler1978nineteen}. In total, the entire
evaluation of $Q(w^{1/2}) \exp(\newton)$ can be done without eigenvalue decomposition
or square roots.

\begin{rem}
  The quantity $g(w, v) := Q(w^{1/2}) \exp(Q(w^{1/2}) v)$ can be
  written using the manifold exponential map (\Cref{sec:manifold}) as
  \[
    g(w, v) = \Exp_w( Q(w) v). 
  \]
    For the algebra of
  symmetric matrices $(\algName = \mathbb{S}^n)$,
  it's known (e.g.,~\cite{sra2015conic}) that $\Exp_W$ satisfies
  \[
    \Exp_W(Z)  =  \exp(Z W^{-1} ) W.
  \]
  This provides  an alternative proof of \Cref{prop:geospecial}
  for the special case of $\algName = \mathbb{S}^n$.  Precisely, taking $Z = Q(W) V = WVW$, we deduce that
  \[
    g(W,V) = \Exp_W( Q(W) V)  =  \exp( (W V W) W^{-1} ) W = \exp( W V ) W,
  \]
  as claimed.
\end{rem}

%\subsection{Evaluation of norms}
%\begin{lem}
%  \[
%    \|z\|^2_{\infty} =   \|Q( z)  \|_{\infty}
%  \]
%  Hence, letting $z = Q(w^{1/2})s$
%\[
%  \|Q(w^{1/2})s\|^2_{\infty} =  \|Q(w^{1/2})Q(s)Q(w^{1/2})   \|_{\infty} = \lambda_{max} (Q(w)Q(s))
%\]
%\end{lem}

\subsection{Feasible points}
Since the presented algorithms update $w$ along geodesics,  the point $\sqrtcp(w, w^{-1})$ only
satisfies the affine constraints of the primal-dual pair~\eqref{sdp:main} in
the limit.  Nevertheless, under a norm condition,  we can \emph{always} produce
a feasible $(x, s)$ from the Newton direction $\newton(w,\mu)$.
\begin{prop}\label{prop:feas}
For $w \in \inter \coneName$ and $\mu > 0$, let
  $d = \newton(w, \mu)$ and 
  \[
    x =  \sqrtcp Q(\sqrtmw)(e + d), \qquad s =  \sqrtcp Q(\sqrtminvw)(e - d).
  \]
  If $\|d\|_{\infty} \le 1$, then $(x, s)$ is feasible for~\eqref{sdp:main}.
  \begin{proof}
    By definition of the Newton direction (Definition~\ref{defn:NewtonDir}),
    it holds that $x \in x_0 + \mathcal{L}$ and
    $s \in s_0 + \mathcal{L}^{\perp}$.  Further, since $\|d\|_{\infty} \le 1$,
    we have that $e\pm d \in \coneName$. Finally, $x, s \in \coneName$
    given that $Q(z)y  \in \coneName$ for all $ z \in \algName$ and $y \in \coneName$.
  \end{proof}
\end{prop}
\noindent In light of \Cref{sec:ntcomp}, this proposition gives a
sufficient condition for feasibility of a full Nesterov-Todd step when $x = \mu s^{-1}$.
It can therefore be compared with~\cite[Lemma 3.3]{gu2011full}.

\section{Computational results}\label{sec:comp}
We provide a series of computational experiments that illustrate
key features of our algorithms and the performance of an implementation.
First, we illustrate that {\tt longstep} executes far fewer iterations than {\tt
shortstep}, despite its weaker theoretical guarantees.
We then illustrate {\tt longstep}-performance on a range of symmetric cones, 
including the exceptional cone and the psd Hermitian matrices with complex
and quaternion entries;  to our knowledge, these are  the first computational results for the
quaternion and the exceptional cone.
We  next demonstrate  global convergence of the centering procedure {\tt center}.
Finally, we compare our {\tt longstep}-implementation {\tt conex} (pronounced
CON-ex) to {\tt sdtp3}, a widely used solver that is based on the Nesterov-Todd
algorithm~\cite{toh2009sdpt3}.

For each instance, the affine constraints
are of the form~\eqref{eq:affine} but with the equality constraints $By =g$ and
associated dual variable $z$ omitted.
The operator $A : \mathbb{R}^m \rightarrow \algName$ is randomly generated. 
Unless stated otherwise, the cone $\coneName$ is the set of psd matrices $\mathbb{S}^n_{+}$,
the cost vectors are the identity, i.e., $x_0 = e$ and $s_0 = e$,
and $m = 10$.

%Results show superior performance of {\tt longstep}, mirroring the performance gap
%between  short- and long-step interior-point methods~\cite{potra2000interior}.
%Moreover, this performance depends only on $n$, the rank of the cone.
%We observe distinct convergence phases for {\tt center}. 

\subsection{Algorithm comparison}
\begin{figure}
  \begin{tabular}{cc}
  \hspace{-1.3cm}
    \includegraphics[width=.57\linewidth]{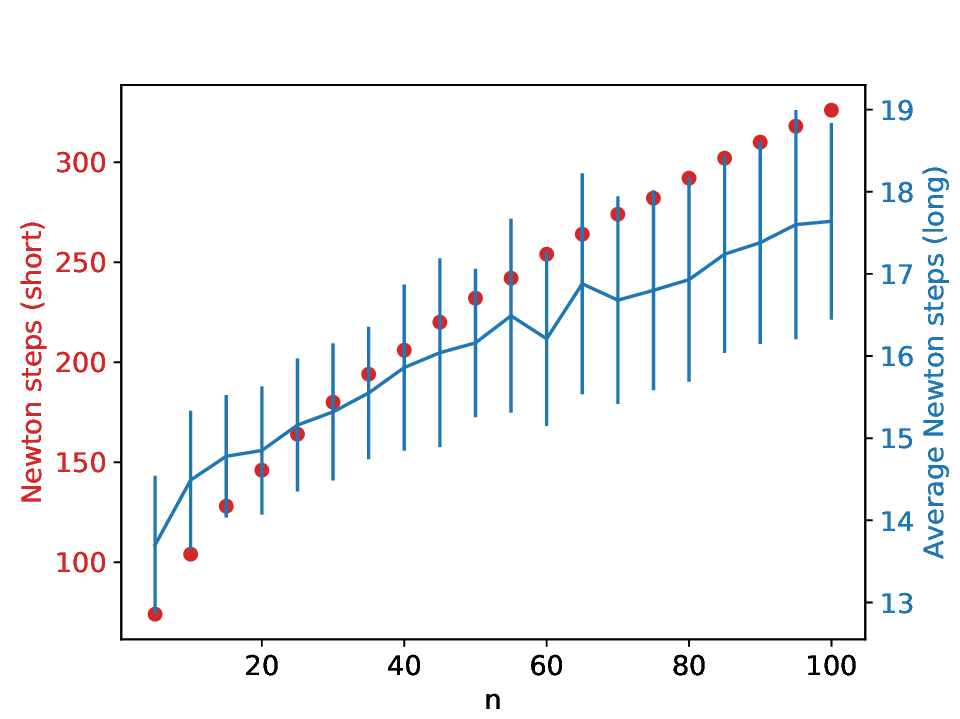}
    \includegraphics[width=.57\linewidth]{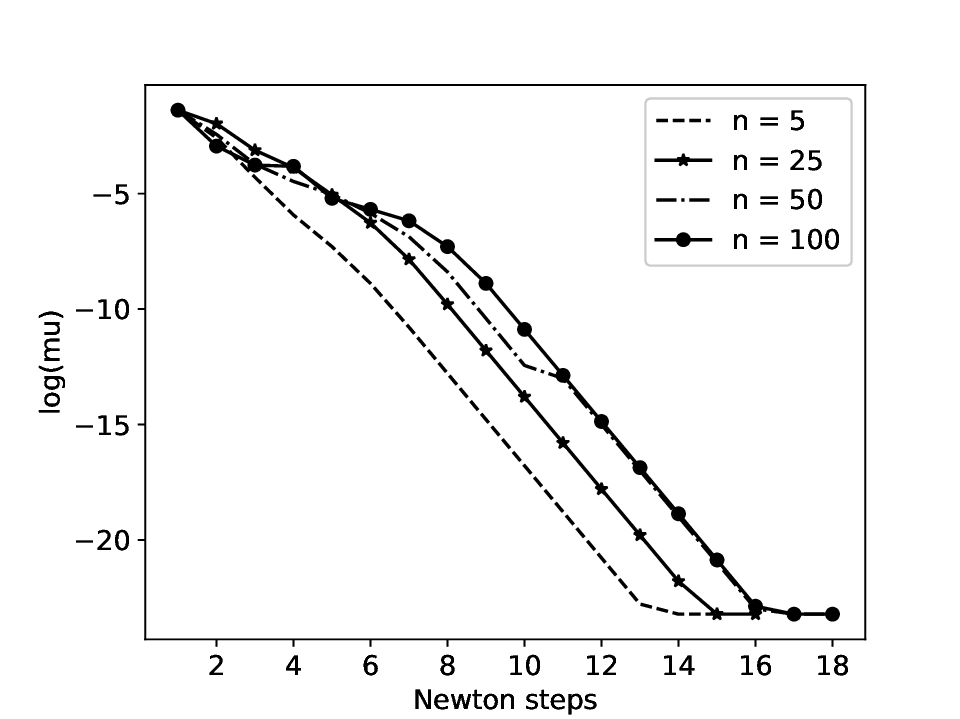}
  \end{tabular}
    \caption{Total Newton steps  vs  $n$ for {\tt shortstep} and {\tt longstep}
    (left) on random SDPs $(\coneName = \mathbb{S}_+^n)$. (Note the different scales.) Typical decrease in centering
    parameter $\mu$ for {\tt longstep} (right).}~\label{fig:shortvslong}
\end{figure}
We compare (Figure~\ref{fig:shortvslong})
the total number of Newton steps  {\tt longstep} and {\tt shortstep} execute to
update an initial centered point $\hat w(\mu )$ to $\hat w(\frac{1}{k} \mu)$
where $k= 25000^2$. For each $n$, we compute the average number of steps
executed by {\tt longstep} over twenty random problems semidefinite programs
($\coneName = \mathbb{S}^n_{+}$). The number of steps executed by {\tt
shortstep}
is independent of the problem instance, so no averaging is necessary.
As shown,  {\tt longstep} provides a significant  improvement
over {\tt shortstep}.  We also see that {\tt longstep} enters a steady-state
regime in which it reduces the centering parameter $\mu$ at a rate that is
independent of $n$.

For {\tt longstep}, we  chose a divergence bound $\beta$ that grows linearly
with $n$; specifically, we took $\beta = 100 n$ where 100 is chosen arbitrarily
and the dependence on $n$ is intended to compensate for the
$\|\newton\|^2$-dependence of the divergence upper-bound $h_{ub}$.
We chose a re-centering tolerance of $\alpha = 10$, and a final centering tolerance of
$\epsilon = \frac{1}{200}$.  For {\tt shortstep},  we selected the
centering-parameter update $k$ and the number of inner iterations $m$ using
\Cref{thm:barrier} with the parameter values $(\beta, \epsilon) =
(\frac{1}{2}, \sqrt{\frac{1}{200}})$.

\begin{figure}
  \begin{tabular}{cc}
  \hspace{-1.3cm}
    \includegraphics[width=.57\linewidth]{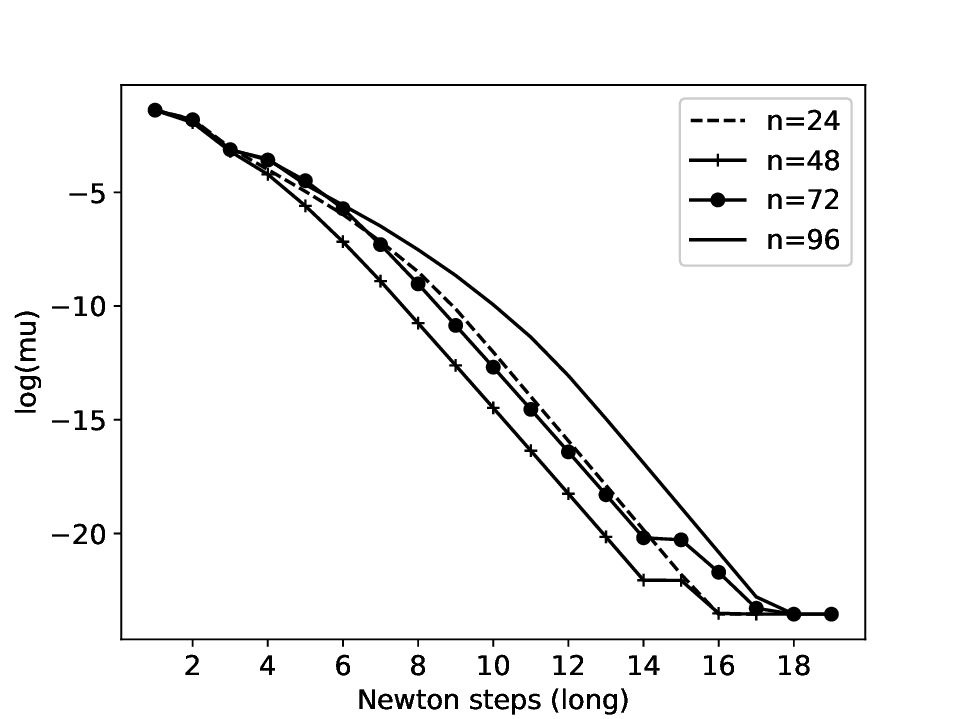}
    \includegraphics[width=.57\linewidth]{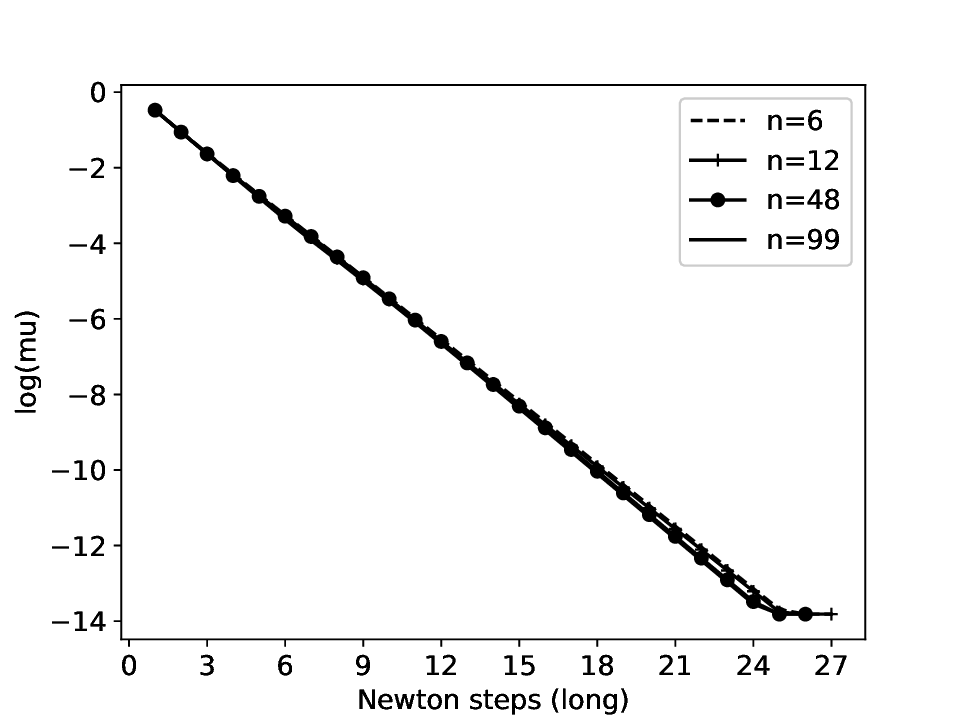}
  \end{tabular}
    \caption{Typical decrease in centering parameter $\mu$ for {\tt longstep} on a
      $p$-fold product of a special cone $\coneName_{s}$ (left) and 
      $p$-fold product of an exceptional cone $\coneName_{ex}$ (right).
      The $p$-fold product of $\coneName_{s}$ has rank $n=24p$ and
      and the product of $\coneName_{ex}$ has rank $n=3p$.
      }\label{fig:products}
\end{figure}
\begin{figure}
  \begin{tabular}{cc}
    \hspace{-1.3cm}  \includegraphics[width=.57\linewidth]{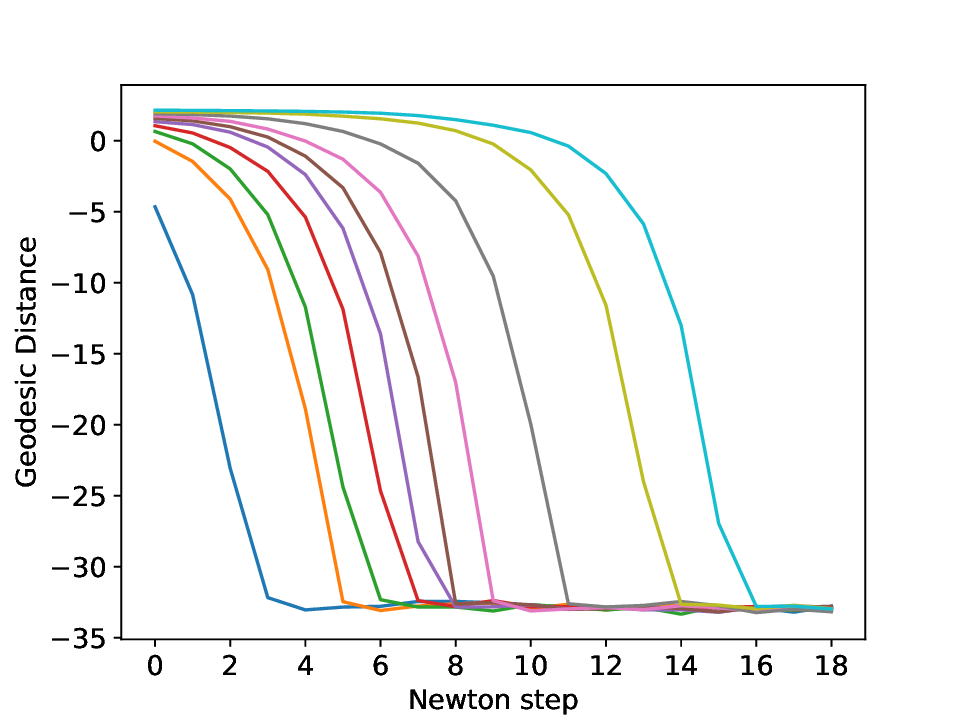}
    \includegraphics[width=.57\linewidth]{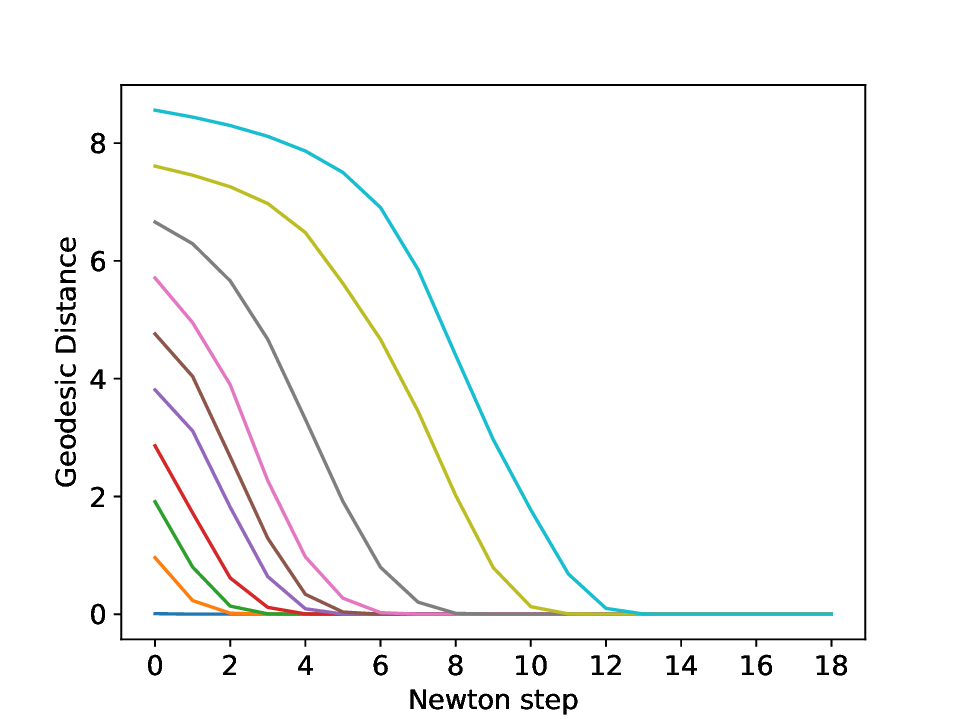}
  \end{tabular}
  \caption{Convergence of the centering procedure {\tt center} for different initialization 
  points in log (left) and linear (right) scalings. Plotted is geodesic distance to the centered point $\hat w(\mu)$. }~\label{fig:center} 
\end{figure}

\subsection{Products of special and exceptional cones}
For a Euclidean-Hurwitz algebra  $\mathbb{D}$, let $\mathbf{H}(\mathbb{D})^q$ 
denote the Hermitian matrices of order $q$ endowed 
with multiplication $X \circ Y = \frac{1}{2}(XY + YX)$
and inner-product $\trace X \circ Y$. If $\mathbb{D}$ is the real numbers $\mathbb{R}$, the complex numbers $\mathbb{C}$, or the quaternions $\mathbb{Q}$,
then $\mathbf{H}(\mathbb{D})$ is a special Jordan algebra of rank $q$.
When $\mathbb{D}$  is the octonions $\mathbb{O}$, 
then $\mathbf{H}(\mathbb{D})^3$ is an exceptional Jordan algebra of rank three.
(Note that in this notation,  $\mathbf{H}(\mathbb{R})^q$ refers to the
symmetric matrices $\mathbb{S}^q$.)

Denoting the cone-of-squares by $\mathbf{H}(\mathbb{D})^q_{+}$, we consider
the following cones
\[
  \coneName_{s} := \mathbf{H}(\mathbb{R})_+^{q} \times \mathbf{H}(\mathbb{C})_{+}^{q} \times \mathbf{H}(\mathbb{Q})_{+}^{q}, 
  \qquad \coneName_{ex} :=  \mathbf{H}(\mathbb{O})^3_{+}.
\]
Fixing $q = 8$, we plot (\Cref{fig:products}) the progress of {\tt longstep} on randomly generated instances formulated over $p$-fold products
$\coneName = \coneName_{s} \times \coneName_{s} \times \cdots \times \coneName_{s}$.
We similarly plot progress for products of $\coneName_{ex}$.
For the special cone $\coneName_{s}$, we used a divergence upper-bound of
$\beta = 100n$, where $n =  3 q p$, the rank of $\coneName$.  For the
exceptional cone $\coneName_{ex}$, we used a much tighter tolerance of $\beta =
\frac{1}{10} n$, where $n = 3p$.  This tighter
tolerance was necessary to avoid numerical errors we suspect are related to
errors in the power-series approximation of the geodesic
update~(\Cref{prop:geopower}). Recall use of this approximation is necessary
because $H(\mathbb{D})^3$ is not special  and hence prevents use of
the associative matrix exponential (\Cref{prop:geospecial}). Note that this
tighter tolerance leads to more iterations compared with the special cone $\coneName_s$, 
but also more regular updates of $\mu$ that are essentially independent of $n$.

\subsection{Global convergence}

The procedure {\tt center} used by {\tt longstep}  globally converges. That is, it always
returns  $\hat w(\mu)$ given an \emph{arbitrary} initial point $w_0 \in \inter
\coneName$ and centering parameter $\mu > 0$.
For a fixed problem instance, we plot convergence behavior for different 
initial conditions (\Cref{fig:center}).
We observe that convergence rate is divided into an initial and quadratic
phase.  These phases are expected from Theorem~\ref{thm:newton}.

\subsection{Implementation}
\newcommand\CC{C\nolinebreak[4]\hspace{-.05em}\raisebox{.4ex}{\relsize{-3}{\textbf{++}}}}
An implementation is available at ${\tt www.github.com/frankpermenter/conex}$.
All symmetric cones are directly supported,
including the Hermitian psd matrices with quaternion entries, the exceptional cone,
and generalized Lorentz cones of the form $\{ (x, t) \in \mathbb{R}^n \times \mathbb{R} : \langle x, x\rangle \le  t^2\}$
for arbitrary inner-products $\langle \cdot, \cdot \rangle$.
 \Cref{tab:results} compares performance with {\tt sdpt3} configured
to use Nesterov-Todd steps.
Computations  were performed in Ubuntu 18.04 on a machine with an
{\tt Intel Xeon CPU E5--2687W v4 @ 3.00GHz} processor. Both solvers were
configured to use the same linear algebra (BLAS) library.
For {\tt conex} error calculations,  $(x, s, y)$ was constructed from $w$ and
the final Newton step using Proposition~\ref{prop:feas}.
Results show that {\tt conex} achieves the same accuracy
in less time across a range of different problem sizes. 
We note that the relative timing difference is reduced for problems with $m > n$. 
For such problems, both solvers spend more  time on an identical calculation:
construction and solution of the linear system from~\Cref{sec:leastsquares};
see also \Cref{sec:ntcomp}.

\begin{table}
 % \hspace{-1cm}
  \smaller{}
  \begin{tabular}[t]{ccccccccccc}
    Parameters  & \multicolumn{2}{c}{Solver Time (sec)} & \multicolumn{2}{c}{Dual Residual} & \multicolumn{2}{c}{Duality Gap } & \multicolumn{2}{c}{$\lambda_{\min}(x)$} & \multicolumn{2}{c}{$\lambda_{\min}(s)$} \\
    $(n, m)$ & spdt3 &  conex  & spdt3 &  conex & sdpt3 & conex & sdpt3 & conex & sdpt3 & conex  \\ \toprule
%    (25,   63) & 6.0e-01 &  4.7e-02 &  1.2e-12 &  2.8e-12 & 1.1e-09 &  6.3e-10 \\ 
%    (50,   250)    & 2.4e+00 &  6.0e-01 &  4.0e-12 &  6.2e-12 & 5.1e-10 &  7.2e-10 \\ 
%    (100,   1000)  & 2.0e+01 &  1.4e+01 &  5.6e-11 &  2.6e-11 & 1.8e-09 &  7.0e-10 \\
%    \midrule
%    (25,   25) &  1.5e-01 &  8.3e-03 &  1.0e-10 &  5.4e-13 & 2.3e-10 &  1.2e-09 \\ 
%    (50,   50) &  8.5e-01 &  5.4e-02 &  2.4e-11 &  1.2e-12 & 3.1e-09 &  1.8e-09 \\ 
%    (100,   100) &   3.0e+00 &  8.5e-01 &  1.4e-10 &  2.8e-12 & 5.9e-09 &  2.5e-09 \\ 
(20, 20) & 1.1e-01 & 4.1e-03 &  1.4e-12 & 3.9e-12 & 1.4e-09 & 8.9e-10 & 3.2e-10 & 2.2e-10 & 3.2e-10 & 2.2e-10 \\ 
(50, 50) & 7.0e-01 & 1.1e-01 &  1.0e-12 & 1.5e-12 & 1.1e-09 & 1.9e-09 & 1.2e-10 & 2.0e-10 & 1.2e-10 & 2.0e-10 \\ 
(100, 100) & 3.1e+00 & 9.8e-01 &  2.0e-12 & 3.9e-12 & 9.7e-10 & 2.4e-09 & 7.6e-11 & 1.9e-10 & 7.6e-11 & 1.9e-10 \\ 
    \midrule
(20, 40) & 1.4e-01 & 1.6e-02 &  6.9e-11 & 7.7e-13 & 4.6e-10 & 7.2e-10 & 1.2e-10 & 1.8e-10 & 1.2e-10 & 1.8e-10 \\ 
(50, 250) & 1.8e+00 & 5.6e-01 &  1.5e-11 & 9.8e-12 & 5.3e-09 & 6.6e-10 & 1.5e-09 & 1.9e-10 & 1.5e-09 & 1.9e-10 \\ 
(100, 1000) & 1.9e+01 & 1.4e+01 &  3.4e-11 & 3.1e-11 & 6.5e-10 & 6.9e-10 & 1.7e-10 & 1.9e-10 & 1.7e-10 & 1.9e-10 \\ 
\end{tabular}
%  \begin{tabular}[t]{cccccc}
%    F & $\mu$ & GU. &   \\
% \hline
%   .3 & .5 & .6 \\ 
%    .3 & .5 & .6 \\ 
%    .3 & .5 & .6 \\ 
%\end{tabular}
  \caption{Solver time and residual comparison between our implementation {\tt conex} and {\tt sdpt3}. 
  The dual residual and duality gap refer to $k_1^{-1} \|A^*x-b\|$ and $k_2^{-1}|\langle c, x\rangle - b^{T}y|$
  for $k_1 = 1+\|b\|_{\infty}$ and $k_2 = 1+|\langle c, x\rangle| + |b^{T}y|$.
  Instances use a random $A : \mathbb{R}^m \rightarrow \mathbb{S}^n$ with
  $c := e$, $b:=A^*e$ and $\coneName = \mathbb{S}^n_{+}$.}~\label{tab:results}
\end{table}

\section*{Acknowledgements}
We thank Richard Y. Zhang and anonymous reviewers for helpful comments on an earlier draft.

{\small
\bibliographystyle{abbrvnat}
\bibliography{bib}
}

\appendix

\section{Appendix}\label{sec:appendix}
This section contains background results about the Euclidean Jordan algebra $\algName$ and
cone-of-squares $\coneName$ that we referenced without proof. The first
establishes properties of the quadratic representation $Q(u)v := 2 u\circ (u
\circ v) - (u\circ u) \circ v$.
\begin{lem}[\cite{faraut1994analysis}]~\label{lem:quad}
  The following statements hold.
  \begin{enumerate}
      %II.3.1.
    \item $Q(u)^{-1} = Q(u^{-1})$ for all invertible $u \in \algName$.
      %Proposition II.3.3
    \item\label{lem:quad:item:QAQinv} $(Q(u) v)^{-1} = Q(u^{-1}) v^{-1}$ for all invertible $u, v \in \algName$.
      %Proposition III.5.2
    \item $Q(Tu) = T Q(u) T^*$ for any $u \in \algName$ and automorphism $T: \algName \rightarrow \algName$ of $\coneName$,
    where $T^*: \algName \rightarrow \algName$ denotes the adjoint of $T$.
    \item $Q(u)^2 = Q(u^2)$ for all $u \in \algName$.
    \item $Q(u)e = u^2$ for all $u \in \algName$.
    \item $Q(u)$ is self-adjoint, i.e., $\langle Q(u) v, w\rangle = \langle v, Q(u) w\rangle$ for all $u, v, w \in \algName$.
  \end{enumerate}
  \begin{proof}
    The first properties are Propositions II.3.1., II.3.3, III.5.2, p. 55, and p. 48
    of~\cite{faraut1994analysis}. The last is evident from the definition
    of $Q(u)$ and the fact that Jordan multiplication is self-adjoint, i.e.,
    $\langle u \circ v, w \rangle = \langle  v, u \circ w \rangle$.
  \end{proof}
\end{lem}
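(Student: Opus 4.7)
The plan is to work with the operator form $Q(w) = 2L(w)^2 - L(w^2)$, where $L(w)z := w \circ z$, and dispatch the six items in order of increasing depth. First I would handle items 5 and 6 directly. Item 5 is immediate: $Q(w)e = 2w\circ(w\circ e) - (w\circ w)\circ e = 2w^2 - w^2 = w^2$. For item 6, since a Euclidean Jordan algebra satisfies $\langle x\circ y, z\rangle = \langle y, x\circ z\rangle$, each $L(w)$ is self-adjoint, and hence so are $L(w)^2$, $L(w^2)$, and their linear combination $Q(w)$.

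For item 3, when $T$ preserves Jordan multiplication, it is immediate that $L(Tw) = T L(w) T^{-1}$ and $L((Tw)^2) = T L(w^2) T^{-1}$, so $Q(Tw) = T Q(w) T^{-1}$; orthogonality of Jordan algebra automorphisms (which preserve spectral data and hence the trace inner product) then upgrades $T^{-1}$ to $T^*$. The lemma, however, asserts this for any cone automorphism; here the identity still holds but requires the Koecher-Vinberg correspondence between automorphisms of the symmetric cone and the structure group of the Jordan algebra, and I would defer to Faraut and Kor\'anyi for this upgrade.

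The main obstacle is items 1, 2, and 4, all of which rest on the \emph{fundamental formula} $Q(Q(w)z) = Q(w)Q(z)Q(w)$. Granting this formula, item 4 follows by setting $z = e$ and invoking $Q(e) = I$ together with item 5: $Q(w^2) = Q(Q(w)e) = Q(w)Q(e)Q(w) = Q(w)^2$. Item 1 follows by setting $z = w^{-1}$ together with the easily verified identity $Q(w)w^{-1} = w$: one obtains $Q(w) = Q(w)Q(w^{-1})Q(w)$, and cancelling $Q(w)$ (invertible since $w$ is) gives $Q(w^{-1}) = Q(w)^{-1}$. Item 2 then follows by checking $Q(Q(w)z)\bigl(Q(w^{-1})z^{-1}\bigr) = Q(w)Q(z)Q(w)Q(w^{-1})z^{-1} = Q(w)Q(z)z^{-1} = Q(w)z$, using item 1 and $Q(z)z^{-1} = z$; this identifies $Q(w^{-1})z^{-1}$ as the Jordan inverse of $Q(w)z$.

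Proving the fundamental formula itself is the genuinely deep content---it is established by polarization together with the Jordan identity $L(w)L(w^2) = L(w^2)L(w)$ that defines a Jordan algebra---and this is the step where I would simply cite Proposition II.3.3 of Faraut and Kor\'anyi rather than reproduce the argument.
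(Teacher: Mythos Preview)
Your proposal is correct and, in fact, supplies considerably more argument than the paper itself does. The paper's proof of this lemma is almost pure citation: items 1--5 are dispatched with pointers to specific propositions and page numbers in Faraut--Kor\'anyi, and only item 6 receives the one-line self-adjointness argument (which matches yours exactly). You instead give direct proofs of items 5 and 6, a direct proof of item 3 in the Jordan-automorphism case before deferring the cone-automorphism upgrade, and explicit reductions of items 1, 2, and 4 to the fundamental formula $Q(Q(w)z) = Q(w)Q(z)Q(w)$ before citing that single result. This is the standard route through the material and is essentially how Faraut--Kor\'anyi organize it themselves; what you gain is a self-contained account modulo one deep identity, at the cost of a longer write-up than the paper's bare citation list.
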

\noindent The next establishes properties of orthogonal automorphisms of $\coneName$.
They trivially follow from the fact that such automorphisms are precisely
the Jordan-algebra automorphisms of $\algName$
given our use of the trace inner-product~\cite[p. 56]{faraut1994analysis}.
\begin{lem}\label{lem:auto}
  Let $M : \algName \rightarrow \algName$ be an orthogonal automorphism of $\coneName$.
  Then, the following statements hold for all $u \in \algName$.
  \begin{enumerate}
    \item If $u$ is an idempotent, i.e., $u \circ u = u$, then $Mu$ is an idempotent. 
    \item If $u$ has spectral decomposition $\sum^n_{i=1} \lambda_i e_i$,
     then $Mu$ has spectral decomposition $\sum^n_{i=1} \lambda_i Me_i$.
   \item~\label{lem:auto:exp}$\exp(Mu) = M \exp(u)$.
  \end{enumerate}
   Further, $Me = e$.
\begin{proof}
By use of the trace inner-product,  $M$ is also an automorphism of
  $\algName$~\cite[p. 56]{faraut1994analysis} and hence satisfies
  $(M x)\circ (My) = M (x \circ y)$.  Hence, $(M u) \circ (Mu) = M(u\circ u) = Mu$,
  showing the first statement.
    The second statement is immediate from the first:
    if $u$ has spectral decomposition $\sum^n_{i=1} \lambda_i e_i$,  then $Mu$ has
    decomposition $\sum^n_{i=1} \lambda_i Me_i$, since the $M e_i$
    are idempotent and pairwise orthogonal, i.e., 
    $\langle M e_i,  M e_j \rangle =  \langle  e_i, M^* M e_j \rangle =   \langle  e_i,  e_j \rangle =   0$.
    The  third is immediate from the second:
    \[
      \exp(Mu) = \sum^n_{i=1} \exp(\lambda_i) M e_i =\sum^n_{i=1} M \exp(\lambda_i)  e_i = M \exp(u).
    \]
    Finally, $Me = e$ given that $e = \exp(M 0) = M \exp(0) = Me$.
\end{proof}

\end{lem}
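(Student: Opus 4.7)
The plan is to first observe that since $M$ is an automorphism of $\coneName$ that is orthogonal with respect to the trace inner-product, standard theory of Euclidean Jordan algebras (Faraut--Kor\'anyi, p.~56) upgrades $M$ to a \emph{Jordan-algebra} automorphism: $M(x \circ y) = (Mx) \circ (My)$ for all $x, y \in \algName$. This single multiplicative identity is the engine behind every subsequent claim, so I would cite it up front and then treat the three statements as corollaries.

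Granted the multiplicative identity, statement~(1) is immediate: if $w \circ w = w$, then $(Mw) \circ (Mw) = M(w \circ w) = Mw$. For statement~(2), expand the spectral decomposition $w = \sum_i \lambda_i e_i$ and apply $M$ linearly to obtain $Mw = \sum_i \lambda_i Me_i$. To certify that the right-hand side is itself a spectral decomposition, the family $\{Me_i\}$ must consist of pairwise orthogonal idempotents. Idempotency follows from statement~(1) applied to each $e_i$, while orthogonality exploits $M^*M = I$:
\[
    \langle Me_i, Me_j \rangle = \langle e_i, M^* M e_j \rangle = \langle e_i, e_j \rangle = 0, \qquad i \ne j.
\]

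For statement~(3), feed the spectral representation of $Mw$ into the definition of the exponential map to get $\exp(Mw) = \sum_i e^{\lambda_i} Me_i = M \sum_i e^{\lambda_i} e_i = M \exp w$, where the penultimate step is just linearity of $M$. Finally, the side claim $Me = e$ drops out as the special case $w = 0$: since $M \cdot 0 = 0$, we obtain $e = \exp 0 = \exp(M \cdot 0) = M \exp 0 = Me$. (Alternatively, one can observe $Me = M \sum_i e_i = \sum_i Me_i$ and then sum the Jordan frame identity, but the exponential route is shorter.)

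The one nontrivial ingredient is the very first step: promoting an orthogonal cone automorphism to a Jordan-algebra automorphism. Once that is cited, everything else is a short exercise that chains together linearity of $M$, the multiplicative identity $M(x \circ y) = (Mx) \circ (My)$, and the relation $M^* M = I$ inherited from orthogonality under the trace inner product.
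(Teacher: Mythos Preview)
Your proposal is correct and follows essentially the same approach as the paper: both invoke the Faraut--Kor\'anyi result to upgrade $M$ to a Jordan-algebra automorphism, then derive statements (1)--(3) and $Me=e$ by the same chain of arguments (idempotency via $M(x\circ y)=(Mx)\circ(My)$, orthogonality via $M^*M=I$, the exponential via the spectral form, and $Me=e$ via $\exp(M\cdot 0)$).
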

\end{document}